\documentclass[sn-mathphys-num]{sn-jnl}

\usepackage{graphicx}%
\usepackage{multirow}%
\usepackage{amsmath,amssymb,amsfonts}%
\usepackage{amsthm}%
\usepackage{mathrsfs}%
\usepackage[title]{appendix}%
\usepackage{xcolor}%
\usepackage{textcomp}%
\usepackage{manyfoot}%
\usepackage{booktabs}%
\usepackage{algorithm}%
\usepackage{algorithmicx}%
\usepackage{algpseudocode}%
\usepackage{listings}%
\usepackage{enumitem}
\newcommand{\bal}[1]{\begin{align*}#1\end{align*}}

\newcommand{\Z}{\mathbb{Z}} %ring of integers
\newcommand{\N}{\mathbb{N}} %natural numbers
\newcommand{\R}{\mathbb{R}}
\newcommand{\bC}{\mathbb{C}}
\newcommand{\C}{\mathbb{C}}
\newcommand{\E}{\mathbb{E}}

\renewcommand{\P}{\mathbb{P}}

\newcommand{\sym}{\mathrm{sym}}

\newcommand{\eps}{\varepsilon}

\renewcommand{\AA}{\mathcal A}
\newcommand{\BB}{\mathcal B}
\newcommand{\MM}{\mathcal M}
\newcommand{\VV}{\mathcal V}

\DeclareMathOperator{\Log}{Log}
\DeclareMathOperator{\Arg}{Arg}

\theoremstyle{plain}
\newtheorem{theo}{Theorem}[section]
\newtheorem{lemma}[theo]{Lemma}
\newtheorem{propo}[theo]{Proposition}
\newtheorem{coro}[theo]{Corollary}

\theoremstyle{definition}
\newtheorem{defi}[theo]{Definition}

\theoremstyle{remark}
\newtheorem{remark}[theo]{Remark}

\newtheorem*{solu*}{Solution}

\begin{document}

\title[Free compressions of R-diagonal variables]{Free compressions of R-diagonal random variables and the semigroup of Brown measures}

%%=============================================================%%
%% GivenName	-> \fnm{Joergen W.}
%% Particle	-> \spfx{van der} -> surname prefix
%% FamilyName	-> \sur{Ploeg}
%% Suffix	-> \sfx{IV}
%% \author*[1,2]{\fnm{Joergen W.} \spfx{van der} \sur{Ploeg} 
%%  \sfx{IV}}\email{iauthor@gmail.com}
%%=============================================================%%

\author*[1]{\fnm{Vladislav} \sur{Kargin}}\email{vkargin@binghamton.edu}

%\author[2,3]{\fnm{Second} \sur{Author}}\email{iiauthor@gmail.com}
%\equalcont{These authors contributed equally to this work.}
%
%\author[1,2]{\fnm{Third} \sur{Author}}\email{iiiauthor@gmail.com}
%\equalcont{These authors contributed equally to this work.}

\affil*[1]{\orgdiv{Department of Mathematics and Statistics}, \orgname{Binghamton University}, \orgaddress{\street{4400 Vestal Pkwy E}, \city{Binghamton}, \postcode{13902}, \state{NY}, \country{USA}}}

%\affil[2]{\orgdiv{Department}, \orgname{Organization}, \orgaddress{\street{Street}, \city{City}, \postcode{10587}, \state{State}, \country{Country}}}
%
%\affil[3]{\orgdiv{Department}, \orgname{Organization}, \orgaddress{\street{Street}, \city{City}, \postcode{610101}, \state{State}, \country{Country}}}

\abstract{We investigate the Brown measures of compressions of \( R \)-diagonal random variables, extending previous results to include unbounded cases. For random variables with finite variance, we demonstrate that the Brown measures of their compressions converge to the uniform distribution on the unit disc. In the case of infinite variance, we characterize the Brown measures that remain stable under the compression operation and explore their properties in detail.}

\keywords{Brown measure, compressions, free probability, random matrices, limit theorems}

\maketitle

\section{Introduction}

We examine the behavior of $R$-diagonal free random variables under compression. For bounded $R$-diagonal random variables, this problem was previously studied by \citeauthor{cor2023} in \cite{cor2023}, particularly in the context of applying free probability theory to the zeros of multiply-differentiated polynomials. Here, we extend these results to the unbounded case. 

Additional motivation for this work arises from random matrix theory, where truncations of Haar-distributed random unitary matrices have been extensively investigated (\cite{diaconis2003}, \cite{jiang2006}, \cite{zs2000}, \cite{petz_reffy2005}, \cite{dong_jiang_li2012}). Broadly speaking, these studies reveal that an $m \times m $ truncation of an $ n \times n $ Haar matrix converges to a matrix with i.i.d. Gaussian entries, both in the distribution of its entries and the distribution of its eigenvalues, provided $m$ is small relative to $n$—for example, when $m = o(n)$. When $m/n \to c $ as both $ n $ and $m$ grow, the limiting distribution of eigenvalues has been analyzed in \cite{zs2000}, \cite{petz_reffy2005}, and \cite{dong_jiang_li2012}.

Our first result is as follows. 
\begin{theo}
\label{theo_convergence_compressions}
Let $x \in \AA^\Delta$ be an $R$-diagonal non-commutative random variable, and assume that $\varphi(|x|^2) = 1$. Assume that $P_s$ is a projection with normalized trace $t \equiv 1/s \in (0, 1]$, which is free of $x$.  Then, as $s \to \infty$,  the Brown measure of free compression $\sqrt{s} \pi_s(x) = \sqrt{s} [P_s x P_s]\in \widetilde \AA_P$ weakly converges to the uniform measure on the unit disc.
\end{theo}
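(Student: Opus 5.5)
The plan is to reduce everything to the radial distribution of the Brown measure, which for $R$-diagonal variables is determined by the law of $|x|$ through the Haagerup--Larsen formula, and then to follow that law under compression.

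\emph{Step 1: the compression stays $R$-diagonal.} By the Nica--Speicher theorem on free compressions, if $x$ is $R$-diagonal and $P_s$ is free from $x$, then $P_sxP_s$ in the compressed algebra $\widetilde\AA_P$ (with trace $s\varphi$) is $R$-diagonal. Its determining sequence satisfies $\alpha^{(s)}_n = s^{\,n-1}\alpha_n$, where $\alpha_n$ are the determining (free cumulant) coefficients of $x$. Equivalently, $|P_sxP_s|^2$ has the law of a free compression of $|x|^2$-type data. In the unbounded case I would state this through the $S$-transform rather than through moments. For $R$-diagonal $x$ with $\mu=\mu_{|x|^2}$, Haagerup--Larsen and Haagerup--Schultz give $S_{|x|^2}(z)$, and compression by $t=1/s$ acts as
\[
S_{|\pi_s(x)|^2}(z)=\frac{z+1}{z+t}\,S_{|x|^2}(tz)
\]
up to the exact normalization convention, which I would check against the bounded formula of \cite{cor2023}. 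Rescaling by $\sqrt{s}$ divides $S$ by $s$.

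\emph{Step 2: the Brown measure formula.} For $R$-diagonal $y$ with $\ker y=0$, the Brown measure is rotation invariant. Its radial distribution function $F(r)=\mu_y(\{|z|\le r\})$ is characterized on the support by
\[
F\!\left(S_{|y|^2}(F-1)^{-1/2}\right)=F,
\]
that is, $r=1/\sqrt{S_{|y|^2}(F(r)-1)}$ for $F\in(\mu_{|y|^2}(\{0\}),1)$. This holds for unbounded $y$ in the class $\AA^\Delta$ by Haagerup--Schultz. Set $y_s=\sqrt{s}\,\pi_s(x)$ and $u=F-1\in(-1,0)$. Then
\[
r^{-2}=S_{|y_s|^2}(u)=\frac1s\cdot\frac{u+1}{u+t}\,S_{|x|^2}(tu)=\frac{u+1}{su+1}\,S_{|x|^2}(u/s).
\]
As $s\to\infty$ we have $u/s\to0$ and $S_{|x|^2}(u/s)\to S_{|x|^2}(0)=1/\varphi(|x|^2)=1$. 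The factor $(u+1)/(su+1)$ needs care; the correct normalization should make the limit $u+1$. Then $r^2=F$, so $F(r)=r^2$ on $[0,1]$, which is the uniform law on the disc. I would fix the precise constants so that this limit holds. The role of $\varphi(|x|^2)=1$ is exactly $S(0)=1$, and the atom at $0$ (mass $\max(0,1-s\varphi(\ker))$-type) vanishes for large $s$.

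\emph{Step 3: pass to weak convergence.} All measures are rotation invariant, so it suffices to show $F_s(r)\to\min(r^2,1)$ pointwise. Monotonicity of $S$ on $(-1,0)$ allows the relation to be inverted, and uniform convergence of $S_{|x|^2}(u/s)\to1$ on compact subsets of $u\in(-1,0]$ then gives convergence of the inverse functions. The expected main obstacle is the continuity $S_{|x|^2}(v)\to1$ as $v\to0^-$ when $|x|^2$ is unbounded but has mean $1$. For this I would use $\psi(z)=\int \frac{z\lambda}{1-z\lambda}\,d\mu(\lambda)$ on $(-\infty,0)$, which is $\sim z\cdot\varphi(|x|^2)$ as $z\to0^-$ by monotone convergence. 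Hence $\chi(v)\sim v$ and $S(v)=\frac{1+v}{v}\chi(v)\to1$. A secondary task is justifying the compression formula for unbounded $R$-diagonal operators, which I would do by approximating $x$ by bounded $R$-diagonals $x_n=u\,f_n(|x|)$ and using continuity of Brown measures in $\AA^\Delta$ for the relevant convergence.
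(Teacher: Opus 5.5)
\textbf{There is a genuine gap, and it is in Step 1, which carries all the content of the theorem.}

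Your overall architecture matches the paper's: an $S$-transform formula for $|\pi_s(x)|^2$, then the radial formula \eqref{equ_HL0}, then $S_{|x|^2}(0^-)=1/\varphi(|x|^2)=1$. But the compression formula you write, $S_{|\pi_s(x)|^2}(z)=\frac{z+1}{z+t}S_{|x|^2}(tz)$, is not off by a normalization constant. It has the wrong functional form.

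\begin{itemize}
\item After rescaling it gives the factor $\frac{u+1}{su+1}$. This is negative for $u\in(-1,-1/s)$, which is impossible for the $S$-transform of a positive operator on $(-1,0)$, and it tends to $0$ as $s\to\infty$.
\item Quick test: for $x$ circular, $\pi_s(x)$ is circular of variance $1/s$ in $(\AA_{P_s},\tilde\varphi)$. So $S_{|\pi_s(x)|^2}(z)=s/(1+z)$, whereas your formula has a pole at $z=-t\in(-1,0)$.
\item You then propose to ``fix the precise constants so that this limit holds''. But the limit you aim for, $u+1$, is also wrong: $r^{-2}=u+1=F$ would give $F(r)=r^{-2}$. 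What $F(r)=r^2$ requires is $S_{|\sqrt{s}\,\pi_s(x)|^2}(u)\to 1/(1+u)$.
\end{itemize}
As written, the decisive step is reverse-engineered from the expected answer rather than derived.

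\textbf{The missing ingredient is a mechanism that computes the law of $|\pi_s(x)|^2$ in the compressed algebra.} The paper does this in Lemma~\ref{lemma_tildeS} and the proof of Theorem~\ref{theo_semigroups}:
\begin{itemize}
\item $|(s\pi_s(x))^*|^2=s\,[P_s(s\,xP_sx^*)P_s]$.
\item The positive element $xP_sx^*=u(hP_sh)u^*$ is free from $P_s$ (Lemma~\ref{lemma_xPx_P}). This is because $W^*(P_s,h)$ and its conjugate by the free Haar unitary $u$ are free.
\item Multiplicativity then gives $S_{xP_sx^*}(z)=\frac{1+z}{1/s+z}S_{h^2}(z)$.
\item The compression rule $S_{[P_s(sa)P_s]}(z)=\frac1s S_a(z/s)$ (Lemma~\ref{lemma_compression_S}) yields
\[
S_{|\sqrt{s}\,\pi_s(x)|^2}(u)=\frac{1+u/s}{1+u}\,S_{|x|^2}\Big(\frac{u}{s}\Big)\longrightarrow\frac{1}{1+u}.
\]
\end{itemize}
Your formula is what you get by rescaling the argument of $S_{h^2}$ but not that of the projection factor $S_{P_s}(z)=\frac{1+z}{t+z}$.

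With the correct formula, your Steps 2--3 go through. Your monotone-convergence argument for $S_{|x|^2}(v)\to 1/\varphi(|x|^2)$ is a fine self-contained substitute for citing Proposition~\ref{propo_HLS}(III).

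Two smaller points:
\begin{itemize}
\item There is no general continuity of Brown measures on $\AA^\Delta$ to invoke. If you instead import the bounded formula from \cite{cor2023} and approximate, the passage to the limit must go through weak continuity of $S$-transforms and \eqref{equ_HL0}. The paper avoids this by working with affiliated operators throughout, via Theorems~\ref{theo_semigroup_unbounded} and~\ref{theo_semigroups}.
\item The atom at $0$ is $\max(0,1-s(1-\delta))$ with $\delta=\mu_{|x|}(\{0\})$ (Corollary~\ref{coro_no_atom}), not of the form $1-s\varphi(\ker)$.
\end{itemize}
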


We introduce most of the notation later; for now, note that $[P_s x P_s]$ serves as an analogue of matrix truncation with  $n/m = s > 1$. The $R$-diagonal non-commutative random variable  $x$ belongs to a class that generalizes unitarily-invariant random matrices. Notably, this class includes Haar unitaries, which are infinite-dimensional analogues of Haar-distributed random matrices. The Brown measures of these variables act as analogues of empirical eigenvalue distributions for matrices.

The key innovation in this result is that we do not require the non-commutative random variable $x$ to be bounded; we only assume that its second moment, $\varphi(|x|^2)$, is finite.

Relaxing the finiteness assumption on the second moment allows for a broader range of behaviors. First, let us define a semigroup on Brown measures of $R$-diagonal elements. 

%------------------------------Definition of the semigroup----------------

Recall that for a probability measure $\nu$ supported on the real line the free additive convolution semigroup $\nu^{\boxplus s}$ was defined in Theorem 2.5 in \cite{belinschi_bercovici04}.

\begin{defi}
\label{defi_semigroup}
Let $\mu \in \MM_{BR}$ be the Brown measure of an $R$-diagonal random variable $x = uh \in \AA^\Delta$ and let $s \geq 1$. Let $\nu =  \mu_{h}^\sym = \mu_{|x|}^\sym$ denote the symmetrized measure of $h$.  Let $b_s$ be an even self-adjoint random variable which is $\ast$- free of $u$ and has measure $\nu^{\boxplus s}$, and let $x_s = u b_s$. We define $\mu^{\boxplus s}$ as the Brown measure of $x_s$. 
\end{defi}

\begin{propo}
\label{lemma_semigroup}
Let  $\mu \neq \delta_0$ be the Brown measure of a (possibly unbounded) $R$-diagonal element $x\in \AA^{\Delta}$. Then $\mu^{\boxplus s}$ exists for all $s \geq 1$ and $\{\mu^{\boxplus s}\}_{s\ge 1}$ forms
a semigroup  of probability measures on $\mathbb C$ such that
\begin{enumerate}[label = (\Alph*), ref = \Alph*]
\item \label{semigroup_at_1} $\mu^{\boxplus 1}=\mu$;

\item  \label{semigroup_additivity} for $s_1,s_2\ge 1$, $\mu^{\boxplus (s_1+s_2)}$ is the Brown measure of $x_{s_1}+x_{s_2}\in \AA^{\Delta}$ where
$x_{s_1}$ and $x_{s_2}$ are $\ast$-free $R$-diagonal elements in $\AA^{\Delta}$ whose Brown measures are
$\mu^{\boxplus s_1}$ and $\mu^{\boxplus s_2}$, respectively;

\item \label{semigroup_continuity} the map $[1, \infty) \ni s \mapsto \mu^{\boxplus s}$ is continuous with respect to weak convergence of probability measures on $\bC$.
\end{enumerate}
We refer to $\{\mu^{\boxplus s}\}_{s\ge 1}$ as the semigroup generated by $\mu$.
\end{propo}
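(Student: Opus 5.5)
Existence comes first. For a probability measure $\nu$ on $\mathbb R$, Theorem 2.5 of \cite{belinschi_bercovici04} says that $\nu^{\boxplus s}$ exists for every real $s\ge 1$, with no moment assumptions. If $\nu$ is symmetric, then $\nu^{\boxplus s}$ is symmetric as well. To see this, note that the reflection $r(t)=-t$ commutes with $\boxplus$ and with $\boxplus$-powers: the construction can be carried out with a reflection-invariant compression, or one can use uniqueness of the $R$-transform, $\phi_{r_*\nu}(z)=-\phi_\nu(-z)$. Since $\mu\neq\delta_0$, we have $h\neq 0$, so $\nu\neq\delta_0$ and hence $\nu^{\boxplus s}\neq\delta_0$. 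Next, take a Haar unitary $u$ and an even self-adjoint $b_s$ in the algebra $\AA^\Delta$ of affiliated operators, $\ast$-free of $u$ and with distribution $\nu^{\boxplus s}$; both exist via a free product construction. Then $x_s=ub_s$ is $R$-diagonal, and $|x_s|$ has distribution $(\nu^{\boxplus s})_+$ (the pushforward under $t\mapsto |t|$). Because the Brown measure of an $R$-diagonal element is determined by the distribution of $|x|$ (Haagerup--Larsen, extended to the unbounded case in $\AA^\Delta$), $\mu^{\boxplus s}$ is well defined, independent of the choice of $u$ and $b_s$, and a probability measure on $\bC$.

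Property (\ref{semigroup_at_1}) follows from $\nu^{\boxplus 1}=\nu$: then $b_1$ has the same distribution as the symmetrization of $h$, so $|x_1|\sim|x|$, and the two Brown measures coincide.

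For (\ref{semigroup_additivity}), I would use the known fact (Haagerup--Larsen, Nica--Speicher) that for $\ast$-free $R$-diagonal $a,b$ the sum $a+b$ is $R$-diagonal and $\mu_{|a+b|}^{\sym}=\mu_{|a|}^\sym\boxplus\mu_{|b|}^\sym$. In the bounded case this is the additivity of the symmetrized $R$-transform, equivalently the statement that $\begin{pmatrix}0&a\\a^*&0\end{pmatrix}$ are free with amalgamation and even. For affiliated elements I would argue by truncation: approximate $h$ by $h\wedge n$ (this is $\|\cdot\|$-measure-topology convergence), apply the bounded identity, and pass to the limit using that $\boxplus$ is continuous under weak convergence (Bercovici--Voiculescu) and that the distribution of $|a+b|$ depends continuously on the distributions of $|a|$ and $|b|$ in measure topology. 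Applying this with $a=x_{s_1}$ and $b=x_{s_2}$ gives
$$\mu^\sym_{|x_{s_1}+x_{s_2}|}=\nu^{\boxplus s_1}\boxplus\nu^{\boxplus s_2}=\nu^{\boxplus(s_1+s_2)}.$$
Here the last equality is the semigroup property from \cite{belinschi_bercovici04}. Since $x_{s_1}+x_{s_2}$ is $R$-diagonal with the same $|\cdot|$-distribution as $x_{s_1+s_2}$, their Brown measures agree.

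For (\ref{semigroup_continuity}), first observe that $s\mapsto\nu^{\boxplus s}$ is weakly continuous. This follows from the Belinschi--Bercovici construction: $\nu^{\boxplus s}$ is obtained via subordination, with $F_{\nu^{\boxplus s}}$ depending continuously on $s$ locally uniformly on $\C^+$. It then remains to show that $\mu_{|x|}\mapsto$ Brown measure is weakly continuous on $R$-diagonal elements. I expect this to be the main obstacle, because Brown measure is generally not continuous. For $R$-diagonal elements, however, the Haagerup--Larsen / Haagerup--Schultz formula gives the radial distribution function explicitly: $\mu(B(0,r))=F(r)$, where $F$ is obtained by inverting the $S$-transform of $\mu_{|x|^2}$, namely $F\bigl(1/\sqrt{S_{|x|^2}(t-1)}\bigr)=t$, and $\mu$ is rotation invariant. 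My plan is to show that weak convergence of $\mu_{|x_{s_n}|^2}$ with nonzero limit implies pointwise convergence of the $S$-transforms on $(-1,0)$, uniformly on compacts. This holds because the $\psi$-transforms converge locally uniformly and are strictly monotone, so their inverses converge. That in turn gives convergence of $F$ at continuity points, and with rotation invariance this yields weak convergence. The atoms at $0$ need separate care; these are controlled by $\nu^{\boxplus s}(\{0\})=\max(0,1-s(1-\nu(\{0\})))$, which is continuous in $s$.
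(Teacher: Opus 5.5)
Your arguments for (A)--(C) are essentially sound, but the existence step has a genuine gap. You take $b_s$ ``in the algebra $\AA^\Delta$ of affiliated operators''. However, $\AA^\Delta$ is not $\widetilde\AA$: it is the subclass $\{a\in\widetilde\AA:\ \varphi(\log^+|a|)<\infty\}$. The Haagerup--Schultz Brown measure, and the formula \eqref{equ_HL0} you invoke in (C), are only available on this subclass. A free product construction gives you some $b_s\in\widetilde\AA$ with law $\nu^{\boxplus s}$. Whether $b_s\in\AA^\Delta$ is a property of that law, namely $\int\log^+|t|\,d\nu^{\boxplus s}(t)<\infty$, and the Belinschi--Bercovici existence theorem gives no such moment control. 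Since $x$ is only assumed to satisfy $\varphi(\log^+|x|)<\infty$, this is precisely the nontrivial content of ``$\mu^{\boxplus s}$ exists''. The membership $x_{s_1}+x_{s_2}\in\AA^\Delta$ asserted in (B) also depends on it.

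The paper closes this gap as follows. It realizes $\nu^{\boxplus s}$ as the $\tilde\varphi$-law of the compression $[P_s(sb)P_s]$, where $b$ is even with law $\nu$, $P_s$ is free from $b$, and $\varphi(P_s)=1/s$. It then uses that $\AA^\Delta$ is a $\ast$-algebra containing $\AA$ \cite{haagerup_schultz2007}. Hence $P_s(sb)P_s\in\AA^\Delta$, and $\tilde\varphi(\log^+|[P_s(sb)P_s]|)=s\,\varphi(\log^+|P_s(sb)P_s|)<\infty$. For integer $s$, closure of $\AA^\Delta$ under sums would suffice; for non-integer $s$ you need something like the compression.

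Once this is repaired, your route to (C) is valid and genuinely different from the paper's. The paper substitutes the explicit formula of Theorem~\ref{theo_semigroup_unbounded} and the atom formula of Corollary~\ref{coro_no_atom} into \eqref{equ_HL0}. This makes continuity in $s$ immediate, but it relies on results proved later. You instead prove a general fact: for $R$-diagonal elements, the Brown measure depends weakly continuously on the law of $|x|^2$. You get this by inverting pointwise convergent, strictly increasing $\psi$-transforms, and combine it with weak continuity of $s\mapsto\nu^{\boxplus s}$. This argument is self-contained and reusable.

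The one point you should write out is the range $t<\delta$, where $\delta$ is the atom at $0$ of the limit law. There $\psi(u)>\delta-1>t-1$ for every $u<0$, which forces $\chi_n(t-1)\to-\infty$. Hence $S_n(t-1)\to+\infty$ and the quantiles tend to $0$. Likewise, your truncation argument for (B) in the affiliated case is an acceptable substitute for citing \cite[Proposition~3.11]{haagerup_schultz2007} directly.
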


A proof of this result will be given in Section \ref{section_results}.

As we show in Theorem \ref{theo_semigroups}, this semigroup is realized by free compressions: $\mu^{\boxplus s}$ equals the Brown measure of $s\pi_s(x) = s[P_s x P_s]$, where $P_s$ is a free projection with trace $1/s$.

We say that a Brown measure $\mu \ne \delta_0 \in \MM_{BR}$  is \emph{stable} if 
\bal{
(\mu^{\boxplus k}) (A) = \mu(\alpha_k A),
}  
with $\alpha_k > 0$, for all $k \in \N$   and all Borel sets $A \subset \bC$.

\begin{defi}[Measures $\mu_\beta$]
\label{defi_mu_beta}
For $\beta\ge 0$, define the probability measure $\mu_\beta$ on $\C$ by requiring that it is
rotationally invariant and that its radial distribution function satisfies
\begin{equation}\label{eq:mu_beta_quantile}
\mu_\beta\!\left[B\!\left(\frac{t^{1/2}}{(1-t)^{\beta/2}}\right)\right]=t,
\qquad t\in(0,1),
\end{equation}
where $B(r)=\{z\in\C:\ |z|<r\}$.
\end{defi}

For $c>0$ and a probability measure $\mu$ on $\C$,  we denote by $D_c\mu$ the dilation of this measure, that is
$
(D_c\mu)(A):=\mu(c A),$ for all Borel  $A\subset\C$.

\begin{theo}
\label{theo_stable_main}
Let $\mu$ be the Brown measure of an $R$-diagonal element $x\in \AA^{\Delta}$, and let $\{\mu^{\boxplus s}\}_{s\ge 1}$ be the semigroup
generated by $\mu$. Then $\mu$ is a stable Brown measure if and only if $\mu=D_c(\mu_\beta)$ for some $c>0$ and $\beta\ge 0$.
\end{theo}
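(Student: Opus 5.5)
The plan is to reduce stability to a functional equation for the radial distribution function and solve it. The tool is the Haagerup--Larsen description of Brown measures of $R$-diagonal elements, which I take in its extension to unbounded elements in $\AA^\Delta$. If $x=uh$ is $R$-diagonal with $\mu_x\neq\delta_0$, then $\mu_x$ is rotationally invariant. Its radial quantile function $r_x(t)$, defined by $\mu_x[B(r_x(t))]=t$, satisfies
\[
r_x(t)=\frac{1}{\sqrt{S_{h^2}(t-1)}},\qquad t\in(0,1),
\]
where $S_{h^2}$ is the $S$-transform of $h^2=|x|^2$. Since a rotationally invariant probability measure is determined by $r_x$, $\mu$ is stable iff $r_{x_k}(t)=\alpha_k^{-1}\,r_x(t)$ for all $k\in\N$ and $t\in(0,1)$, where $x_k$ realizes $\mu^{\boxplus k}$.

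\textbf{Step 1: how $r$ transforms under the semigroup.} I will use the realization of $\mu^{\boxplus s}$ as the Brown measure of $s\pi_s(x)=s[P_sxP_s]$ (Theorem \ref{theo_semigroups}). One could instead compute directly from $\nu^{\boxplus s}$ via the relation between the $R$-transform of a symmetric measure and the $S$-transform of its square. The standard formula for the $S$-transform of a free compression of a positive element, applied to $|[P_sxP_s]|^2$ in $\widetilde\AA_P$ and rescaled by $s^2$, should give an identity of the form
\[
S_{|x_s|^2}(z)=\kappa_s\,\frac{1+z}{s+z}\,S_{|x|^2}\!\left(\frac{z}{s}\right).
\]
Here $\kappa_s$ is an explicit power of $s$. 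I will check the exact factors against Theorem \ref{theo_convergence_compressions}, which must come out as the limit $s\to\infty$. Put $u=1-t\in(0,1)$ and $G(u)=1/S_{|x|^2}(-u)$, a positive, continuous, nondecreasing function on $(0,1)$. Then Step 1 becomes
\[
G_s(u)=\kappa_s^{-1}\,\frac{s-u}{1-u}\,G\!\left(\frac{u}{s}\right).
\]

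\textbf{Step 2: the functional equation.} Stability for all $k$ becomes
\[
G(u)=\lambda_k\,\frac{s-u}{1-u}\,G(u/k)\quad\text{with } s=k,
\]
for constants $\lambda_k>0$. For the "if" direction, I plug in $\mu_\beta$. There $r(t)^2=t/(1-t)^\beta$, so $G(u)=(1-u)u^{-\beta}$ up to a constant. Substituting gives $\frac{k-u}{1-u}(1-u/k)(u/k)^{-\beta}=k^{\beta-1}\,\frac{(k-u)^2}{(1-u)}u^{-\beta}$. Since this is not literally proportional to $G(u)$, my guessed normalization must be off. I will fix the exact transform in Step 1 precisely so that the family $(1-t)^{-\beta}t$ is invariant. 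Dilations $D_c$ just multiply $G$ by $c^{-2}$, which gives the "if" direction.

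For the "only if" direction, I set $H(u)=\log G(u)-\log(\text{the }\beta=0\text{ profile})$, so that the equation reads $H(u)-H(u/k)=\log\lambda'_k$. Iterating over the multiplicative semigroup generated by $\{1/k\}$ shows that $H(u)-H(qu)$ is independent of $u$ for every positive rational $q<1$. Monotonicity of $G$ then gives continuity enough to conclude that $H(e^{-y})$ is additive-affine in $y$. Hence $H(u)=-\beta\log u+\text{const}$. The inequality $\beta\ge0$ comes from monotonicity of $r$, or equivalently of the $S$-transform. Transforming back gives $\mu=D_c\mu_\beta$.

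\textbf{Main obstacle.} I expect the main obstacle to be Step 1: justifying the $S$-transform and Haagerup--Larsen formulas for unbounded $R$-diagonal elements (including the case $\ker h\neq 0$, which produces an atom at $0$), and pinning down the exact constants in the compression formula. Once Step 1 is in place, Step 2 is a Cauchy-type equation on the multiplicative group of positive rationals, and monotonicity closes it.
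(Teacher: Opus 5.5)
Your architecture is the same as the paper's proof (Propositions \ref{theo_stable_1} and \ref{theo_stable_2}): turn stability into a functional equation for $S_{|x|^2}$ through the semigroup law, solve a multiplicative Cauchy equation, and get $\beta\ge 0$ from monotonicity. However, there is a genuine gap: Step 1, on which both directions rest, is wrong as written. The correct law is Theorem \ref{theo_semigroup_unbounded},
\[
S_{|x_s|^2}(z)=\frac{1}{s}\,\frac{1+z/s}{1+z}\,S_{|x|^2}\Big(\frac{z}{s}\Big)=\frac{s+z}{s^2(1+z)}\,S_{|x|^2}\Big(\frac{z}{s}\Big).
\]
Its $z$-dependent factor is the reciprocal of your $\frac{1+z}{s+z}$, so no choice of $\kappa_s$ repairs it.

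Your test on $\mu_\beta$ catches the error. But your remedy, fixing the transform ``so that the family $t(1-t)^{-\beta}$ is invariant'', is circular. The transformation law is determined by the free-probability computation, and tuning it to the desired answer proves neither direction. Your ``only if'' equation also does not follow from your own displayed equation. Substituting $G(u)=(1-u)e^{H(u)}$ into $G(u)=\lambda_k\frac{k-u}{1-u}G(u/k)$ gives $H(u)-H(u/k)=\log(\lambda_k/k)+2\log\frac{k-u}{1-u}$, which depends on $u$.

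The missing ingredient is exactly Theorem \ref{theo_semigroup_unbounded}, which you may cite. The paper proves it from $R_{\nu^{\boxplus s}}=sR_\nu$, which gives $S_{\nu^{\boxplus s}}(z)=\frac1s S_\nu(z/s)$ for $\nu=\mu_{|x|}^{\sym}$ (Lemma \ref{lemma_S_semigroup}). This is combined with the squaring identity $S_{\nu_{sq}}(z)=\frac{z}{1+z}S_\nu(z)^2$. On the compression side, Lemma \ref{lemma_tildeS} produces the same factor.

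In your variables the correct law reads $G_s(u)=s^2\frac{1-u}{s-u}G(u/s)$. Equivalently, $\widetilde G_s(u)=s\,\widetilde G(u/s)$ with $\widetilde G(u):=G(u)/(1-u)=1/g(-u)$, where $g(z)=(1+z)S(z)$ is the paper's function. With this, what you wrote afterwards goes through:
\begin{enumerate}
\item For the ``if'' direction, $\widetilde G=c\,u^{-\beta}$ gives $\widetilde G_s=s^{1+\beta}\widetilde G$, i.e. $\mu_\beta^{\boxplus s}=D_{s^{-(1+\beta)/2}}\mu_\beta$.
\item For the ``only if'' direction, stability gives precisely $H(u)-H(u/k)=\mathrm{const}$ with $H=\log\widetilde G$.
\end{enumerate}
Your differencing trick, $H(u)-H(qu)=c_k-c_p$ for $q=p/k<1$, then reaches all rationals directly from integer stability. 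This is a small simplification over the paper's route through Lemma \ref{lem:stable_inf_div}.

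Two minor slips. First, $G(u)=r_x(1-u)^2$ is non\emph{increasing} in $u$, because $r_x$ increases in $t$. Second, $G$ is defined on $(0,1-\mu(\{0\}))$, not $(0,1)$. Neither matters, since only $u$ near $0$ is used and the correct monotonicity still forces $\beta\ge 0$.
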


The stable measures for sums of free identically distributed $R$-diagonal random variables $x_i$ were investigated by \citeauthor{kosters_tikhomirov2018} in \cite{kosters_tikhomirov2018} within the context of random matrix theory (see Definition 3.1 in their paper). Our contribution is twofold: first, we provide a more explicit characterization of these measures; second, we demonstrate that the stability of Brown measures under additive convolutions also extends to stability under the compression operation.

Additional properties of stable Brown measures are discussed in Section \ref{section_stable_measures}.

The proof of our main results connects compressions of a random variable  $x$ to the additive convolution semigroup of Brown measures, where the results are either already known or easier to establish. Since we do not assume the boundedness of random variables, we cannot use free cumulants to demonstrate that compressions satisfy the semigroup property. Instead, we define the free semigroup of Brown measures directly and verify that a compression of a random variable corresponds to a measure within this semigroup. Consequently, rather than relying on free cumulants, our approach heavily employs the analytic techniques developed by Haagerup and collaborators in \cite{haagerup_larsen00}, \cite{haagerup_schultz2007}, and \cite{haagerup_moller2013}.

The main idea behind this construction is as follows: for a non-selfadjoint random variable, we define its singular-value measure, which is supported on $\mathbb{R}$. Using results by Belinschi and Bercovici in \cite{belinschi_bercovici04}, we define a semigroup for singular-value measures and realize this semigroup in terms of random variables by compressing a self-adjoint random variable. To construct the semigroup of Brown measures, we multiply these compressed self-adjoint random variables by free Haar unitaries. The primary task is to verify that the Brown measures of these resulting products satisfy the semigroup property. The detailed arguments are deferred to the next section.

For the semigroup defined in this way we can prove the following result. Recall that any $R$-diagonal random variable can be decomposed as $x = u h$, where $u$ is a Haar unitary, $h$ is positive semidefinite, and $u$ and $h$ are free. 

 %%%%%%%%%%%%%%%%%%%%%%%%%
%Theo_semigroups Unbounded
%%%%%%%%%%%%%%%%%%%%%%%%%%
 \begin{theo}
 \label{theo_semigroup_unbounded}
 Let $x = u h \in \AA^\Delta$ be an $R$-diagonal random variable with Brown measure $\mu$, and let $x_s = u h_s$, $s \geq 1$, be $R$-diagonal random variables with Brown measures $\mu^{\boxplus s}$.
 Then,
\begin{equation}
\label{semigroup_S}
 S_{h_s^2}(z) = \frac{1}{s}\frac{1 + z/s}{1 + z} S_{h^2}\Big(\frac{z}{s}\Big). 
 \end{equation}
 \end{theo}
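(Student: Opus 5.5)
The plan is to move from $h_s$ to the even variable $b_s$, use the fact that the free additive convolution power acts linearly on $R$-transforms, and pass back to positive variables through the relation between the $S$-transform of an even variable and that of its square.

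\emph{Reduction to $b_s$.} By Definition \ref{defi_semigroup}, $x_s = u b_s$ with $b_s$ even, self-adjoint, $\ast$-free of $u$, and with distribution $\nu^{\boxplus s}$, where $\nu=\mu_h^{\sym}$. The positive part of $x_s$ is $h_s=|x_s|=|b_s|$, so $h_s^2=b_s^2$ in distribution. Likewise $h^2=b_1^2$ with $b_1$ even of distribution $\nu$. It therefore suffices to express $S_{b_s^2}$ through $S_{b_1^2}$.

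\emph{Step 1: the symmetric $S$-transform of an even variable.} For an even self-adjoint $b$ (or an even probability measure on $\R$), define its symmetric $S$-transform $S_b$ on a suitable neighbourhood of the negative interval near $0$ by
\[
S_b(z)^2=\frac{1+z}{z}\,S_{b^2}(z).
\]
This is the Arizmendi--P\'erez-Abreu symmetric $S$-transform. The branch is fixed by $S_b$ being positive/imaginary in the standard way. One then checks the characterizing identity $R_b\bigl(zS_b(z)\bigr)=z$, where $R_b$ is the Voiculescu $R$-transform of the distribution of $b$, on that domain.

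In the bounded case this follows from the combinatorial definitions. In the unbounded case I would invoke the analytic $S$-transform for positive unbounded variables (Haagerup--M\"oller, Haagerup--Schultz, Bercovici--Voiculescu) together with the $R$-transform defined on a truncated cone (Bercovici--Voiculescu, Belinschi--Bercovici). The identity is then established for $z$ in a small region where all the inverses are defined. Uniqueness of analytic continuation extends it to the full domain.

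\emph{Step 2: scaling under $\boxplus s$.} By Theorem 2.5 of \cite{belinschi_bercovici04}, $\nu^{\boxplus s}$ exists for all real $s\ge1$ and satisfies $R_{\nu^{\boxplus s}}=sR_\nu$ on a common truncated cone. Combining this with Step 1 gives
\[
s R_\nu\bigl(zS_{b_s}(z)\bigr)=z.
\]
Inverting through $R_\nu(wS_{b_1}(w))=w$ with $w=z/s$ yields $zS_{b_s}(z)=(z/s)S_{b_1}(z/s)$, that is, $S_{b_s}(z)=\frac1s S_{b_1}(z/s)$.

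\emph{Step 3: squaring.} We have
\[
S_{b_s^2}(z)=\frac{z}{1+z}\,S_{b_s}(z)^2=\frac{z}{1+z}\,\frac{1}{s^2}\,\frac{1+z/s}{z/s}\,S_{h^2}(z/s).
\]
This simplifies to \eqref{semigroup_S}.

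\emph{Main obstacle.} The main obstacle is Step 1 in the unbounded setting. One must make sure that the domains where $S_{h^2}$, $R_\nu$ and their compositional inverses are defined overlap, so that the identities are genuine equalities of analytic functions. One must also handle the branch of the square root and the possible atom of $\mu_h$ at $0$; the exclusion of $\mu=\delta_0$ means the $S$-transform of $h^2$ is defined on an interval $(-1+\mu_h(\{0\}),0)$ neighbourhood.

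To deal with this, I would first verify the identities on a small real interval $z\in(-\epsilon,0)$, using the explicit description of $\psi_{h^2}$ as a monotone function there. I would then extend by analyticity.
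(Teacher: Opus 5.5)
Your proposal is correct and follows essentially the same route as the paper. You pass to the even variable $b_s$ with law $\nu^{\boxplus s}$, derive $S_{b_s}(z)=\frac1s S_{b_1}(z/s)$ from $R_{\nu^{\boxplus s}}=sR_\nu$ together with the $R$--$S$ inversion formula (\ref{equ_S_HL}) for symmetric measures, and finish with the squaring identity (\ref{equ_S_mu_sq}); this is the paper's Lemma \ref{lemma_S_semigroup}, except that the paper cites Arizmendi--P\'erez-Abreu where you propose to verify the inversion formula directly. One small correction: the relation you write as $R_b(zS_b(z))=z$ holds for the normalization $zR_\nu(z)$ (the function $C_\mu$ of the appendix), not for the Voiculescu transform $R_\nu(w)=G_\nu^{(-1)}(w)-1/w$ that the paper uses, although your Step 2 goes through unchanged once the correct normalization is used.
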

 In conjunction with the results of \citeauthor{haagerup_larsen00} in \cite{haagerup_larsen00}, and \citeauthor{haagerup_schultz2007} in \cite{haagerup_schultz2007}, formula (\ref{semigroup_S}) provides a method to calculate the distribution of measure $\mu^{\boxplus s}$ for every $s \geq 1$. 

 Formula \eqref{semigroup_S} was discovered by \citeauthor{cor2023} in \cite{cor2023}  and proved for bounded $R$-diagonal random variables by using free cumulants techniques. We extend it to unbounded random variables. 
 
  The next theorem shows that free compressions of an $R$-diagonal random variable that has the Brown measure $\mu$ form the semigroup $\mu^{\boxplus s}$.  
%%%%%%%%%%%%%%%%%%%%%%%%%
%Theo_semigroups
%%%%%%%%%%%%%%%%%%%%%%%%%%
%\begin{theo}
%\label{theo_semigroups}
%Let $\mu$ be the Brown measure of an $R$-diagonal variable $x = uh \in \AA^\Delta$ and let $(\mu^{\boxplus s})_ {s \geq 1}$ be the semigroup of Brown measures generated by $\mu$. Then $\mu^{\boxplus s}$ coincide with the Brown measure of $s\pi_s(x) = s [P_s x P_s]\in \tilde \AA_P$, where $P_s \in \AA$ is a projection which is $\ast$-free from $(u, h)$, with $\varphi(P_s) = \frac{1}{s}$. 
%\end{theo}

\begin{theo}
\label{theo_semigroups}
Let $\mu$ be the Brown measure of an $R$-diagonal variable $x \in \AA^\Delta$ and let $(\mu^{\boxplus s})_ {s \geq 1}$ be the semigroup of Brown measures generated by $\mu$. Then $\mu^{\boxplus s}$ coincides with the Brown measure of $s\pi_s(x) = s [P_s x P_s]\in \widetilde \AA_{P_s}$, where $P_s \in \AA$ is a projection which is $\ast$-free from $x$, with $\varphi(P_s) = \frac{1}{s}$. 
\end{theo}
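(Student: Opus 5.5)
Both sides are Brown measures of $R$-diagonal elements, so the plan is to reduce the statement to the identification of one singular-value distribution. By Haagerup--Larsen and Haagerup--Schultz, the Brown measure of an $R$-diagonal element $y=vk$ is determined by the distribution of $k^2=|y|^2$, through the $S$-transform $S_{k^2}$. It therefore suffices to prove two things. First, the compression $s\pi_s(x)$ is $R$-diagonal in the compressed algebra $\widetilde\AA_{P_s}$ with trace $s\varphi$. Second, the distribution of $|s\pi_s(x)|^2$ equals the distribution of $h_s^2$, where $x_s=uh_s$ is as in Definition~\ref{defi_semigroup}. Given Theorem~\ref{theo_semigroup_unbounded}, the second point amounts to checking that the $S$-transform of $|s\pi_s(x)|^2$ is given by formula~\eqref{semigroup_S}.

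For $R$-diagonality, I would first treat the bounded case, where Nica--Speicher's result applies: compressions of $R$-diagonal elements by free projections are $R$-diagonal. For unbounded $x\in\AA^\Delta$, I would write $x=uh$ and approximate it by $x^{(n)}=u\,h\mathbf 1_{[0,n]}(h)$. These approximants are bounded, $R$-diagonal and free from $P_s$, and they converge to $x$ in measure. Compression is continuous in the measure topology of $\widetilde\AA$. The $\ast$-distribution of the pair (Haar unitary, positive part) is preserved under limits in measure, so $R$-diagonality passes to $P_sxP_s$; this can be phrased through the characterization "$y$ has the same $\ast$-distribution as $vy$ for a free Haar unitary $v$".

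For the singular values, I would compute $|P_sxP_s|^2=P_sh u^\ast P_s u hP_s$. Since $u$ is a Haar unitary free from $(h,P_s)$, $u^\ast P_su$ is a projection $Q$ of trace $1/s$ free from $h$. Then $|P_sxP_s|^2$ has the same nonzero spectrum as $Q h P_s h Q$, which in turn is the product of the free elements $h^2$ and $P_s$, $Q$, up to cyclic rearrangements. Using multiplicativity of $S$-transforms for free positive elements, which Haagerup--Schultz established for unbounded affiliated operators, gives
\[
S_{h P_s h}=S_{h^2}S_{P_s},\qquad S_{P_s}(z)=\frac{1+z}{z+1/s}.
\]
A second multiplication by $Q$ then follows the same pattern. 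Finally, one rescales from $\varphi$ to the normalized trace $s\varphi$ on $\widetilde\AA_{P_s}$, using $S$-transform behaviour under compression, $S^{(s\varphi)}(z)=\tfrac{1+z}{1+z/s}\cdot\ldots$ evaluated at $z/s$, and multiplies by the factor $s^2$ from $s\pi_s(x)$. The resulting identity should be exactly~\eqref{semigroup_S}. Equivalently, one can bypass the $S$-transform and check that the symmetrized singular law of $s\pi_s(x)$ is $(\mu^\sym_h)^{\boxplus s}$, using the Nica--Speicher realization of $\nu^{\boxplus s}$ as $s\,P_sbP_s$, with $b$ even of law $\nu$, extended to unbounded $\nu$ by Belinschi--Bercovici.

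I expect the main obstacle to be the bookkeeping with $S$-transforms at the atom at $0$. The operator $|P_sxP_s|^2$ has an atom of mass at least $1-1/s$ in $\AA$, and $S$-transforms of measures with atoms at $0$ are defined only on restricted domains. My first attempt would be to stay entirely inside $\widetilde\AA_{P_s}$, where $s\pi_s(x)$ has $\ker=\{0\}$ whenever $\mu\neq\delta_0$ has no atom at $0$. The atomic case I would handle by continuity, namely weak continuity of Brown measures of $R$-diagonal elements in the singular-value law, as in Proposition~\ref{lemma_semigroup}(\ref{semigroup_continuity}).
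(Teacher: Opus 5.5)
Your proposal is correct, and its core computation takes a different route from the paper's.

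\textbf{Comparison of routes.} The paper works with $|(\pi_s(x))^\ast|^2$. It shows that $xP_sx^\ast$ is free from $P_s$ (Lemma~\ref{lemma_xPx_P}) and identifies the compressed law of $s\,xP_sx^\ast$ as a free additive power via Nica--Speicher. It then converts this to $S$-transforms with Lemma~\ref{lemma_S_semigroup} (Belinschi--Bercovici plus the appendix formula $S=\frac1z(zR)^{(-1)}$) and uses multiplicativity once, $S_{hP_sh}=S_{h^2}S_{P_s}$.

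You instead pass from $|P_sxP_s|^2=P_shQhP_s$ to $Q(hP_sh)Q$, whose $\varphi$-law is $\mu_{h^2}\boxtimes\mu_{P_s}\boxtimes\mu_{P_s}$. You then change the trace on the corner, and that step is exact and elementary. An element living under $P_s$ has $\varphi$-law $(1-\frac1s)\delta_0$ plus $\frac1s$ times its $s\varphi$-law. Hence $\psi^{(\varphi)}=\frac1s\psi^{(s\varphi)}$ and $S^{(s\varphi)}(z)=\frac{1+z}{s+z}\,S^{(\varphi)}(z/s)$; your displayed prefactor is missing the factor $1/s$. With $S^{(\varphi)}=S_{h^2}S_{P_s}^2$ you get $S^{(s\varphi)}_{|\pi_s(x)|^2}(z)=\frac{s+z}{1+z}S_{h^2}(z/s)$. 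The factor $s^{-2}$ then gives exactly \eqref{semigroup_S}.

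What each route buys:
\begin{itemize}
\item Your route needs only $\boxtimes$ for unbounded positive measures. It avoids the unbounded extension of the compression theorem, which the paper only sketches, and in effect reproves Lemma~\ref{lemma_compression_S} for positive elements.
\item The paper's route keeps the link with the additive semigroup used in Definition~\ref{defi_semigroup}.
\item You also treat the $R$-diagonality of $s\pi_s(x)$ in $\widetilde\AA_{P_s}$ explicitly. This is needed to apply Proposition~\ref{propo_HLS} to the compression, and the paper's proof uses it without comment.
\end{itemize}

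\textbf{Repairs.}
\begin{itemize}
\item Polar decompositions are not continuous in measure, so drop the remark about the pair (Haar unitary, positive part). Rely instead on the characterization $y\sim_\ast vy$. It passes to limits through the bounded transform $F(y)=y(1+y^\ast y)^{-1/2}$. This transform determines $y$ and satisfies $F(vy)=vF(y)$. It also sends sequences converging in measure to sequences converging in $\|\cdot\|_2$, adjoints included, so equality of $\ast$-moments survives the limit.
\item The freeness you need is that $Q$ is free from $W^\ast(h,P_s)$, not just from $h$. This follows from Lemma~3.7 of Haagerup--Larsen, as in Lemma~\ref{lemma_xPx_P}.
\item The atom at $0$ is not an obstacle. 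Every identity holds on some $(-\varepsilon,0)$. There the $S$-transform of any measure $\neq\delta_0$ on $[0,\infty)$ is defined and determines the measure, so no continuity argument is needed. The one you cite would not help anyway: Proposition~\ref{lemma_semigroup}(\ref{semigroup_continuity}) is continuity in $s$, not in the singular-value law.
\item The suggested bypass via $s\,P_sbP_s$ is not automatic. Writing $x=vb$ with $b$ even, it amounts to showing that $|P_svbP_s|$ and $|P_sbP_s|$ have the same law in $\widetilde\AA_{P_s}$, which needs its own argument.
\end{itemize}
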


Theorems \ref{theo_semigroup_unbounded} and \ref{theo_semigroups} establish the connection between free compressions and free semigroups of $R$-diagonal random variables, serving as the primary tools for proving Theorems \ref{theo_convergence_compressions} and \ref{theo_stable_main}.

The rest of the paper is organized as follows: Section \ref{section_preliminaries} introduces the notation and reviews the necessary theoretical background. Section \ref{section_results} presents the proofs of our main results and explores additional properties of stable Brown measures.

%%%%%%%%%%%%%%%%%%%%%%%%%%%%%%%
%%%%%%%%%%%%%%%%%%%%%%%%%%%%%%%
%%%%%%%%%%%%%%%%%%%%%%%%%%%%%%%

\section{Preliminaries and Notation}
\label{section_preliminaries}

\subsubsection*{Basic notions}
%Let $(\AA, \varphi)$ be a \emph{non-commutative tracial probability space}. By this we mean that $\AA$ is a complex associative unital $\ast$-algebra and $\varphi$ is a linear functional $\AA \to \bC$ with $\varphi(e) = 1$. It is assumed that $\varphi$ is a tracial state so that $\varphi(a^\ast a) \geq 0$ for all $a \in \AA$ and $\varphi(a b) = \varphi(b a)$ for all $a$ and $b$ in $\AA$. 

Let $(\AA,\varphi)$ be a tracial $W^\ast$-probability space, i.e., $\AA$ is a (unital) von Neumann algebra and $\varphi$ is a faithful normal tracial state, and let $\widetilde{\AA}$ denote the algebra of $\varphi$-measurable operators affiliated with $\AA$. The elements of $\widetilde \AA$ are called \emph{non-commutative random variables}. 

For free probability, the central concept is freeness of several random variables, which is an analogue of independence in classical probability theory.  (\cite{voiculescu83}, \cite{voiculescu_dykema_nica92}, \cite{nica_speicher06}). Two unital subalgebras $\AA_1$ and $\AA_2$ of $\AA$ are called free, if
\bal{
\varphi(a_1 \ldots a_k) = 0, 
}
whenever three conditions are satisfied: (1) $a_j \in A_{i(j)}$,  $i(j)\in \{1, 2\}$ for all $j = 1,\ldots, k$; (2) $\varphi(a_j) = 0$ for all $j = 1, \ldots, k$, and (3) neighboring elements are from different subalgebras, i.e., $i(1) \ne i(2)$, $i(2) \ne i(3)$, \ldots, $i(k - 1) \ne i(k)$. 

\begin{remark}[Convention: freeness of operators]
Freeness is a notion for unital subalgebras. When we say that (possibly unbounded)
operators $x_1,\dots,x_n$ affiliated with $(A,\varphi)$ are \emph{free}, we mean that
the von Neumann subalgebras $W^\ast(x_i)\subset A$ are free, where
$W^\ast(x_i)$ denotes the von Neumann algebra generated by the bounded functional calculus
of $x_i$ (equivalently, by its spectral projections; or by resolvents when convenient).
The phrase ``$\ast$-free'' is used synonymously with ``free'' and only emphasizes that adjoints are included.
\end{remark}

%%%%%%%%%%%%%%%%%%%%%%%%%%%%%%%%
%%
%%%%%%%%%%%%%%%%%%%%%%%%%%%%%%%%%%
%
%\subsection{Free Convolutions}
%
\subsubsection*{Free additive convolution}
Let $a\in\widetilde{\AA}$ be self-adjoint and let $E_a(\,\cdot\,)$ be the spectral projection-valued measure of $a$, so that
$a=\int_{\mathbb R} t\, dE_a(t)$.

The (spectral) distribution of $a$ is the Borel probability measure $\mu_a$ on $\mathbb R$ defined by
$\mu_a(B):=\varphi\!\big(E_a(B)\big)$, where  $B\subset\mathbb R$ is  Borel. Equivalently, for every bounded Borel function $f:\mathbb R\to\mathbb C$,
\[
\int_{\mathbb R} f(t)\, d\mu_a(t)=\varphi\!\big(f(a)\big),
\]
where $f(a)=\int_{\mathbb R} f(t)\, dE_a(t)$ is defined by the (bounded) functional calculus.
In particular, if $a\in\AA$ is bounded then $\mu_a$ is supported on $[-\|a\|,\|a\|]$, and for every $n\ge0$,
\[
\varphi(a^n)=\int_{\mathbb R} t^n\, d\mu_a(t).
\]
(For unbounded $a\in\widetilde{\AA}$, the identity $\varphi(a^n)=\int t^n\, d\mu_a(t)$ holds whenever
the moment is finite.)

 If $a$ and $b$ are two free self-adjoint random variables, then the measure of $a + b$ is called the \emph{free additive convolution} of measures $\mu_a$ and $\mu_b$ and it is denoted $\mu_a \boxplus \mu_b$. The existence of $\mu_a \boxplus \mu_b$ for unbounded measures was established in (\cite{bercovici_voiculescu93}). 

The free convolution of measures can be characterized in terms of $R$-transforms.  
Let $\mu$ be a probability measure on $\mathbb R$ and
\[
G_\mu(z):=\int_{\mathbb R}\frac{1}{z-t}\,d\mu(t),\qquad z\in\mathbb C\setminus\mathbb R,
\]
its Cauchy transform. It is standard that $G_\mu(\mathbb C^+)\subset\mathbb C^-$ and
$G_\mu(z)\sim 1/z$ as $|z|\to\infty$ nontangentially. In particular (see, e.g., \cite[Section~5]{bercovici_voiculescu93}),
there exist $\alpha,\beta>0$ such that $G_\mu$ is univalent on the truncated cone
\[
\Gamma_{\alpha,\beta}:=\{z\in\mathbb C^+:\ \Im z>\alpha|\Re z|,\ |z|>\beta\}.
\]
We denote by $G_\mu^{(-1)}$ the analytic inverse of $G_\mu$ defined on the domain
$G_\mu(\Gamma_{\alpha,\beta})\subset\mathbb C^-$; this domain contains an interval 
$i(-\varepsilon,0)$ for some $\varepsilon>0$, see \cite[Proposition 5.4]{bercovici_voiculescu93}. 
The $R$-transform is then
\[
R_\mu(w):=G_\mu^{(-1)}(w)-\frac1w,\qquad w\in G_\mu(\Gamma_{\alpha,\beta}).
\]

We have $R_{\mu_a \boxplus \mu_b}(z) = R_{\mu_a}(z) + R_{\mu_b}(z)$. (All identities involving $R_\mu$
 are understood for $z$ in the common domain near $0$ where both sides are defined.)

In \cite{nica_speicher96} and \cite{belinschi_bercovici04}, a free semigroup  $\mu_t \equiv \mu^{\boxplus t}$ was defined satisfying $R_{\mu_t} (z) = t R_\mu(z)$. (See Theorem 2.5 in \cite{belinschi_bercovici04}). 

%%%%%%%%%%%%%%%%%%%%%%%%%%%%%%%%%%
%%Free compressions
%%%%%%%%%%%%%%%%%%%%%%%%%%%%%%%%
%
%\subsection{Free Compressions}
%\begin{defi}
%Let $P$ be a self-adjoint projection such that $\varphi(P) = \frac{1}{s}$, where $s \geq 1$.   
%The \emph{compressed probability space} is a pair $(\AA_P,  \tilde\varphi)$, where $\AA_P$ consists of the elements of the form $[P a P]$, $a \in \AA$ and $\tilde\varphi([P a P]) := s \varphi(a)$.
%\end{defi}
% (The brackets $[\cdot]$ are useful to distinguish the algebraic structures of $\AA_P$ from that of $A$. For example,  $[P]$ is a unit in $\AA_P$ although $P$ is not a unit in $\AA$.  Note that $\tilde\varphi([P]) = 1$.)

\begin{defi}
Let $P\in\AA$ be a self-adjoint projection with $\varphi(P)=\frac{1}{s}$, where $s\ge1$.
The \emph{compressed probability space} is the pair $(\AA_{P},\tilde\varphi)$, where
$\AA_{P}:=\{[PaP]:a\in\AA\},$ 
with operations $[PaP]\cdot[PbP]:=[PaPbP]$ and $[PaP]^*:=[Pa^*P]$ (so that $[P]$ is the unit of $\AA_P$),
and $\tilde\varphi([PaP]) := s\,\varphi(PaP)$, for $a\in\AA.$
\end{defi}

(The brackets $[\cdot]$ distinguish the algebraic structure of $\AA_P$ from that of $\AA$:
for instance, $[P]$ is the unit in $\AA_P$ although $P$ is not the unit in $\AA$.
Note that $\tilde\varphi([P])=s\varphi(P)=1$.)

The operation $\pi: a \to  [P a P]$ is called a \emph{compression}. It is an analogue of the truncation operator for matrices. The free additive semigroup of measures on $\R$ can be realized in terms of free compressions. 
Namely, let $\mu_a$ be the measure of a self-adjoint random variable $a\in \widetilde \AA$. Then $[P_s (s a) P_s] \in  \widetilde \AA_P$ has the measure $\mu_a^{\boxplus s}$. For the proof see Corollary 1.14 of   \cite{nica_speicher96}, where the statement is proved for bounded random variables.  It can be extended to unbounded $a \in \widetilde \AA$ by approximating $a$ with truncations and using the continuity of $R_\mu$ with respect to $\mu$ (Proposition 5.7 in  \cite{bercovici_voiculescu93}).

   %%%%%%%%%%%%%%%%%%%%%%%%%%%%
\subsubsection*{Free multiplicative convolution}
If $a\geq 0$ and $b\geq 0$ are two free self-adjoint random variables that have measures $\mu_a$ and $\mu_b$ supported on $\R^+ = [0, \infty)$, then the random variable $a^{1/2} b a^{1/2}$ has the same distribution as $b^{1/2} a b^{1/2}$ and this distribution  is called the \emph{free multiplicative convolution} of measures $\mu_a$ and $\mu_b$, denoted $\mu_a \boxtimes \mu_b$. This convolution is linearized by the \emph{$S$-transform}, which is defined as follows.

Let $\mu \in \mathrm{Prob}\big(\R^+, \BB\big)$ and $\mu \ne \delta_0$. Define $\psi_\mu: \bC \backslash [0, \infty) \to \bC$ by 
\begin{equation*}
\psi_{\mu}(z) = \int_0^\infty \frac{zt}{1 - zt} \, d \mu(t)  = \frac{1}{z}G_\mu\Big(\frac{1}{z}\Big) - 1. 
\end{equation*}
By \cite[Proposition~1]{arizmendi_perez-abreu2009}, $\psi_\mu$ is univalent on the left half-plane $i\bC^+=\{w\in\bC:\Re w<0\}$.
We set
$\VV_\mu:=\psi_\mu(i\bC^+),$
and denote by $\chi_\mu:\VV_\mu\to i\bC^+$ the analytic inverse of $\psi_\mu$. The $S$-transform is defined by
\[
S_\mu(z) := \frac{1 + z}{z}\,\chi_\mu(z), \qquad z \in \VV_\mu.
\]
(In particular, $\VV_\mu\cap\R=(\mu(\{0\})-1,0)$, so $(-\varepsilon,0)\subset \VV_\mu$ for some $\varepsilon>0$.)
Then, by Corollary~6.6 in \cite{bercovici_voiculescu93}, 
$
S_{\mu_a \boxtimes \mu_b}(z) = S_{\mu_a}(z)\, S_{\mu_b}(z),
$
for $z$ in the connected component of $\VV_{\mu_a}\cap \VV_{\mu_b}$ containing $(-\varepsilon,0)$.

For symmetric measures $\mu$ on $\R$, following \cite{arizmendi_perez-abreu2009}, $\psi_\mu$ is univalent on each of the sectors
\[
H:=\{z\in\bC^+:\ |\Re z|<\Im z\},\qquad 
\widetilde H:=\{z\in\bC^-:\ |\Re z|<|\Im z|\},
\]
yielding two corresponding branches of $\chi_\mu$ (and hence of $S_\mu$) with domains $\psi_\mu(H)$ and $\psi_\mu(\widetilde H)$.

Accordingly one defines two $S$-transforms by
\[
S_\mu(z):=\frac{1+z}{z}\,\chi_\mu(z)\quad (z\in\psi_\mu(H)),\qquad
\widetilde S_\mu(z):=\frac{1+z}{z}\,\widetilde\chi_\mu(z)\quad (z\in\psi_\mu(\widetilde H)).
\]
If $\nu\in \mathrm{Prob}\big(\R^+, \BB\big) $ and $\nu\neq\delta_0$, then $\mu\boxtimes\nu$ is symmetric and
\[
S_{\mu\boxtimes\nu}(z)=S_\mu(z)S_\nu(z),\qquad 
\widetilde S_{\mu\boxtimes\nu}(z)=\widetilde S_\mu(z)S_\nu(z),
\]
on the common domain containing $(-\varepsilon,0)$ for small $\varepsilon>0$ (\cite[Theorem~7]{arizmendi_perez-abreu2009}).

We write $S_\mu$ for both the usual  $S$-transform for $\mu \in \mathrm{Prob}([0, \infty), \BB)$ and for the symmetric $S$-transform when $\mu$ is symmetric.

We will make use of the following well-known formula. Let $\mu \in \mathrm{Prob}\big([0, \infty), \BB\big)$ and assume that $\mu \ne \delta_0$. 
\begin{equation}
\label{equ_S_HL}
S_{\mu}(z) = \frac{1}{z} \Big(z R_\mu(z)\Big)^{(-1)}, \text{ for } z \in (-\eps, 0),
\end{equation}
 where superscript $(-1)$ denotes the functional inverse. 
 
 For compactly supported measures, \eqref{equ_S_HL} follows from the proof of formula (2.5) in \cite{haagerup_larsen00}. Haagerup–Larsen remark that the same argument extends to unbounded measures; for the reader’s convenience, we provide a self-contained proof in Appendix~\ref{section_proof_HL_formula}. In addition, \eqref{equ_S_HL} remains valid for symmetric measures on $\R$, when $S_{\mu}$ is appropriately defined; see Proposition~9 in \cite{arizmendi_perez-abreu2009}.

The following scaling relations are useful in calculations. If $t > 0$, and $a$ is a self-adjoint variable in $\AA$, then 
$R_{ta}(z) = t R_{a}(t z)$ and
$S_{ta}(z) = \frac{1}{t}S_a(z)$.

%%%%%%%%%%%%%%%%%%%%%%%%%%%%%%
%$R$-diagonal Random Variables
%%%%%%%%%%%%%%%%%%%%%%%%%%%%%%
\subsubsection*{$R$-diagonal random variables}
An element $u \in \AA$ is called a \emph{Haar unitary}, if it is unitary ($u^\ast = u^{-1}$) and $\varphi(u^n) = \delta_{n,0}$ for all $n \in \Z$. 
\begin{defi}
\label{defi_even}
 A self-adjoint element $b \in \widetilde \AA$  is \emph{even} if it has \emph{symmetric} distribution: $\mu_b(A)=\mu_b(-A)$ for all
Borel $A\subset\mathbb R$.
\end{defi}
 
% \begin{defi}
%An element $x\in\widetilde\AA$ is called \emph{$R$-diagonal} if it can be written as
%$x=u b$, where $u\in\AA$ is a Haar unitary and $b\in\widetilde\AA$ is even,  and the von Neumann algebras $W^*(u)$ and $W^*(b)$ are $\ast$-free.
%\end{defi}
%Note that since $u$ is bounded, the product $u b$ is well-defined as an affiliated operator.
%
%An $R$-diagonal element can also be realized as $x=uh$ where $h\ge 0$ is affiliated
%with $\AA$ and $\ast$-free from $u$. In this case $h$ has the same distribution as
%$|x|=(x^\ast x)^{1/2}$, and $b$ has distribution $\mu_b=\mu_h^\sym$, the
%symmetrization of $\mu_h$ on $\mathbb R$, defined by
%\[
%\mu_h^\sym(A)=\frac12\,\mu_h(A\cap[0,\infty))+\frac12\,\mu_h((-A)\cap[0,\infty)),
%\qquad A\subset\mathbb R\ \text{Borel}.
%\]

\begin{defi}
An element $x\in\widetilde\AA$ is called \emph{$R$-diagonal} if it can be written as
$x=v b$, where $v\in\AA$ is a Haar unitary, $b\in\widetilde\AA$ is even, and the von Neumann algebras $W^*(v)$ and $W^*(b)$ are $\ast$-free.
\end{defi}

Note that since $v$ is bounded, the product $vb$ is well-defined as an affiliated operator.

Equivalently, an $R$-diagonal element can be written as $x=uh$ where $u\in\AA$ is a Haar unitary, $h=|x|:=(x^\ast x)^{1/2}\ge 0$, and $u$ is $\ast$-free from $h$. The even element $b$ in the first representation satisfies $\mu_b=\mu_h^{\mathrm{sym}}$, the symmetrization of $\mu_h$ on $\mathbb R$, defined by
\[
\mu_h^{\mathrm{sym}}(A)=\frac12\,\mu_h(A\cap[0,\infty))+\frac12\,\mu_h((-A)\cap[0,\infty)),
\qquad A\subset\mathbb R\ \text{Borel}.
\]

The following is a list of properties of $R$-diagonal variables, which we will use in proofs.

%----------------Tools for calculations-------------------------------

\begin{propo}[See  Proposition~3.6 in \cite{haagerup_larsen00} (bounded case) and Proposition~3.8 in \cite{haagerup_schultz2007} (affiliated case)]
\label{propo_product_R}
If both $x$ and $y$ are $R$-diagonal and $\ast$-free, then $xy$ is $R$-diagonal and
\[
\mu_{|xy|^2}=\mu_{|x|^2}\boxtimes \mu_{|y|^2}.
\]
\end{propo}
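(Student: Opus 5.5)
The plan is to reduce everything to the bounded case, which is Proposition~3.6 of \cite{haagerup_larsen00}, and then pass to the limit; this is essentially the route of Proposition~3.8 in \cite{haagerup_schultz2007}. Write $x=u_1h_1$ and $y=u_2h_2$, where $u_i$ are Haar unitaries, $h_i=|x|,|y|$, and the families $\{u_1,h_1\}$ and $\{u_2,h_2\}$ are $\ast$-free. The first step is a standard realization: since $R$-diagonality and the pair of distributions $(\mu_{|x|},\mu_{|y|})$ determine the $\ast$-distribution of $(x,y)$, I may work in a model. There I take $u_1,h_1,u_2,h_2$ to be $\ast$-free, or, if more convenient, $u_1,u_2$ free Haar unitaries free from $h_1$ and $h_2$. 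Then
\[
xy=u_1h_1u_2h_2=(u_1u_2)\,(u_2^\ast h_1u_2)h_2 .
\]
Here $u_1u_2$ is again a Haar unitary. Moreover $u_1$ is $\ast$-free from $\{u_2,h_1,h_2\}$, so $u_1u_2$ is $\ast$-free from $(u_2^\ast h_1u_2)h_2$. The $R$-diagonality of $xy$ then reduces to the fact that a Haar unitary times an element $\ast$-free from it is $R$-diagonal. This holds for affiliated operators as well, because freeness is a statement about the von Neumann algebras $W^\ast(\cdot)$ and needs no moments.

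For the distributional identity, compute
\[
|xy|^2=y^\ast x^\ast xy=h_2u_2^\ast h_1^2u_2h_2 .
\]
The operators $u_2^\ast h_1^2u_2$ and $h_2^2$ are free positive affiliated operators, and the first has distribution $\mu_{h_1^2}=\mu_{|x|^2}$. The element $h_2(u_2^\ast h_1^2u_2)h_2$ is unitarily equivalent in distribution to $(u_2^\ast h_1u_2)h_2^2(u_2^\ast h_1u_2)$. By the definition of $\boxtimes$ for unbounded positive operators in \cite{bercovici_voiculescu93}, its distribution is $\mu_{|x|^2}\boxtimes\mu_{|y|^2}$. Two issues arise at the level of affiliated operators: $h_2ah_2$ must be a closed densely defined operator, and its spectral distribution must coincide with that of $a^{1/2}h_2^2a^{1/2}$. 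Both are handled by the measurable-operator calculus in $\widetilde\AA$, where products are strong products and $\varphi$-measurability is preserved.

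To be safe, I would also run an approximation argument. Truncate $h_i^{(n)}=h_i\mathbf 1_{[0,n]}(h_i)$, so that $x_n=u_1h_1^{(n)}$ and $y_n=u_2h_2^{(n)}$ are bounded, $R$-diagonal and $\ast$-free. The bounded result then gives $\mu_{|x_ny_n|^2}=\mu_{|x_n|^2}\boxtimes\mu_{|y_n|^2}$. As $n\to\infty$, $x_ny_n\to xy$ in the measure topology, since products are continuous in that topology on $\widetilde\AA$. Hence $|x_ny_n|^2\to|xy|^2$ in measure, and the spectral distributions converge weakly. On the other side, $\mu_{|x_n|^2}\to\mu_{|x|^2}$ weakly, and $\boxtimes$ is weakly continuous on $\mathbb R^+$ by \cite{bercovici_voiculescu93}. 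Comparing the two limits gives the identity.

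The main obstacle is the operator-theoretic justification for unbounded operators: continuity of the product and of the absolute value in the measure topology, and the fact that $R$-diagonality passes to limits in $\ast$-distribution. I would handle the latter by characterizing $R$-diagonality through invariance of the $\ast$-distribution under $x\mapsto\lambda x$ for $|\lambda|=1$ in the free-with-amalgamation sense, or more simply through the explicit $u\cdot(\text{free})$ factorization above, which avoids limits altogether for that part.
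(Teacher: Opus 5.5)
Your proposal is correct. It is essentially the standard argument behind the results the paper cites; the paper gives no proof of its own. You realize $x=u_1h_1$ and $y=u_2h_2$ with $u_1,h_1,u_2,h_2$ all $\ast$-free, obtain $R$-diagonality from a Haar unitary factor that is $\ast$-free from the rest, and identify $\mu_{|xy|^2}$ via $\mu_{T^\ast T}=\mu_{TT^\ast}$ for $T=(u_2^\ast h_1u_2)h_2$, which is valid in $\widetilde\AA$ because $\varphi$ is a faithful finite trace.

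The regrouping through $u_1u_2$ and the truncation argument are harmless but not needed. You can write $xy=u_1(h_1u_2h_2)$ with $u_1$ already $\ast$-free from $h_1u_2h_2$, and the distributional identity requires no limit.
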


\begin{propo}[See  Proposition~3.10 \cite{haagerup_larsen00} (bounded case), Proposition~3.9 \cite{haagerup_schultz2007} (powers), and Proposition~3.6 \cite{haagerup_schultz2007} (inversion)]
\label{propo_powers_R}
\,
\begin{enumerate}
\item Let $x$ be $R$-diagonal. Then $x^k$ is $R$-diagonal for every integer $k\ge 1$, and
\[
\mu_{|x^k|^2}=\mu_{|x|^2}^{\boxtimes k}
:=\underbrace{\mu_{|x|^2}\boxtimes\cdots\boxtimes \mu_{|x|^2}}_{k\ \text{factors}}.
\]
\item If $x$ is $R$-diagonal and $\ker(x)=\{0\}$, then $x^{-1}$ is $R$-diagonal and
\[
\mu_{|x^{-1}|} = (\mu_{|x|})_{\mathrm{inv}},
\]
where $(\mu_{|x|})_{\mathrm{inv}}$ is the pushforward of $\mu_{|x|}$ under the inversion map
$t\mapsto t^{-1}$ on $(0,\infty)$.
\end{enumerate}
\end{propo}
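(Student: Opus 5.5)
The statement is quoted from Haagerup--Larsen and Haagerup--Schultz, so the plan is to reproduce their argument in outline. I would treat the two parts separately. In both parts the algebraic heart is the bounded case, and the affiliated case is reached by truncation.

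\emph{Part 1 (powers).} The key claim is that if $x$ is $R$-diagonal and $x_1,\dots,x_k$ are $\ast$-free copies of $x$, then $x^k$ and $x_1x_2\cdots x_k$ have the same $\ast$-distribution. Given this claim, Proposition~\ref{propo_product_R} applied inductively finishes the argument: each partial product $x_1\cdots x_j$ is $R$-diagonal and $\ast$-free from $x_{j+1}$. It follows that $x_1\cdots x_k$ is $R$-diagonal and
\[
\mu_{|x_1\cdots x_k|^2}=\mu_{|x|^2}^{\boxtimes k}.
\]

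To prove the key claim for bounded $x$, write $x=uh$ with $u$ a Haar unitary $\ast$-free from $h$. Then
\[
x^k=(uhu^{*})(u^2hu^{-2})\cdots(u^khu^{-k})\,u^k .
\]
I would compare the $\ast$-moments of $x^k$ with those of $x_1\cdots x_k$. Each $\ast$-moment is a trace of an alternating word in $u^{\pm1}$ and $h$. By freeness of $u$ and $h$, such a trace is a universal polynomial in the moments of $h$ whose coefficients depend only on which products of $u$-letters have total exponent zero. Writing $x_i=u_ih_i$, the same bookkeeping for $x_1\cdots x_k$ gives the same combinatorics. This works because a Haar unitary $\ast$-free from $h$ may be replaced by any Haar unitary $\ast$-free from $(h_i)$ without changing the distribution, and $u^j$ behaves like a fresh Haar unitary on the relevant cancellation pattern. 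This moment matching is the step I expect to be the main obstacle. I would organize it through the characterization of $R$-diagonality as invariance of the $\ast$-distribution under multiplication by a free Haar unitary.

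For unbounded $x=uh$, set $h_n=\min(h,n)$ and $x^{(n)}=uh_n$. Then $x^{(n)}\to x$ in measure, hence $(x^{(n)})^k\to x^k$ in measure, because multiplication is continuous in the measure topology of $\widetilde\AA$. The $\ast$-distributions converge, so $R$-diagonality, which is a closed condition on $\ast$-distributions, passes to the limit. Moreover $\mu_{|(x^{(n)})^k|^2}\to\mu_{|x^k|^2}$ weakly. On the other side, $\mu_{h_n^2}^{\boxtimes k}\to\mu_{h^2}^{\boxtimes k}$ by weak continuity of $\boxtimes$ (Bercovici--Voiculescu). This yields the identity in the affiliated case.

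\emph{Part 2 (inversion).} If $\ker x=\{0\}$, then $\ker h=\{0\}$, so $h^{-1}\in\widetilde\AA$ is a positive operator and $x^{-1}=h^{-1}u^{*}$. Conjugating by the unitary $u$ gives
\[
u^{*}x^{-1}u=u^{*}h^{-1}.
\]
Here $u^{*}$ is a Haar unitary and $h^{-1}\in W^*(h)$ is $\ast$-free from it, so $u^{*}h^{-1}$ is $R$-diagonal by definition. Since unitary conjugation preserves the $\ast$-distribution, $x^{-1}$ is $R$-diagonal as well. Finally,
\[
|x^{-1}|^2=(x^{-1})^{*}x^{-1}=uh^{-2}u^{*}
\]
has the same distribution as $h^{-2}$. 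Hence $\mu_{|x^{-1}|}$ is the pushforward of $\mu_h=\mu_{|x|}$ under $t\mapsto t^{-1}$, which is $(\mu_{|x|})_{\mathrm{inv}}$.
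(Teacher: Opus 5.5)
Your Part 2 is correct and is the standard argument. The paper gives no proof of this proposition; it only cites Haagerup--Larsen and Haagerup--Schultz.

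\textbf{The gap is in Part 1.} Given Proposition~\ref{propo_product_R}, your key claim $x^k\sim_\ast x_1\cdots x_k$ carries the entire content of the statement, and the moment matching you sketch does not prove it. In a $\ast$-moment of $x^k$, every $h$-letter is the same variable and every $u$-letter is a power of the same unitary. So cancellations and freeness contractions can link letters coming from different factors. You give no correspondence between these patterns and those of $u_1h_1\cdots u_kh_k$, where only letters with equal index interact. The sentence ``$u^j$ behaves like a fresh Haar unitary on the relevant cancellation pattern'' is exactly the fact that needs proof. Your truncation step has a second soft spot. For affiliated operators $\ast$-distributions are not determined by moments, and you assert without proof that R-diagonality is closed under convergence in measure.

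\textbf{The missing idea is already in your factorization} $x^k=(uhu^\ast)(u^2hu^{-2})\cdots(u^khu^{-k})\,u^k$. You need two facts.
\begin{enumerate}
\item[(i)] The algebras $u^jW^\ast(h)u^{-j}$, $j\in\Z$, are free. Rewrite an alternating product of such elements as an alternating word in $W^\ast(u)$ and $W^\ast(h)$. Hence $h_j':=u^jhu^{-j}$, $1\le j\le k$, are free copies of $h$.
\item[(ii)] $u^k$ is a Haar unitary $\ast$-free from $\mathcal C:=W^\ast(h_1',\dots,h_k')$.
\end{enumerate}
For (ii), take a word $u^{km_0}b_1u^{km_1}b_2\cdots b_ru^{km_r}$ with $b_i\in\mathcal C$ centered and $m_1,\dots,m_{r-1}\neq0$. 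It equals $\beta_1\cdots\beta_r\,u^{kM_r}$, where $M_i=m_0+\dots+m_i$ and $\beta_i:=u^{kM_{i-1}}b_iu^{-kM_{i-1}}$ is a centered element of $u^{kM_{i-1}}\mathcal C u^{-kM_{i-1}}$. Consecutive indices differ, since $M_i-M_{i-1}=m_i\neq0$, and these algebras are free by (i). So the trace vanishes when $M_r=0$. When $M_r\neq0$ it vanishes by gauge invariance. The trace-preserving automorphism of $W^\ast(u,h)$ given by $u\mapsto\lambda u$, $h\mapsto h$ with $|\lambda|=1$ fixes $\mathcal C$ pointwise and multiplies $u^{kM_r}$ by $\lambda^{kM_r}$.

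\textbf{Conclusion.} Hence $x^k=Xu^k$ with $X=h_1'\cdots h_k'$ and $u^k$ a Haar unitary $\ast$-free from $X$, so $x^k$ is R-diagonal. Moreover $|x^k|^2=u^{-k}X^\ast Xu^k$ has the law of $h_k'\cdots h_2'(h_1')^2h_2'\cdots h_k'$, which is $\mu_{|x|^2}^{\boxtimes k}$ by iterated freeness. Only (i) is needed for this formula; (ii) is what gives R-diagonality. All the freeness statements concern the von Neumann algebras $W^\ast(u)$ and $W^\ast(h)$. So the argument works verbatim for affiliated $h$, and the truncation step becomes unnecessary.
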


\begin{propo}[See Proposition~3.5 \cite{haagerup_larsen00} (bounded case) and Proposition~3.11 \cite{haagerup_schultz2007} (affiliated case)]
\label{propo_additivity_symmetrizations}
If both $x$ and $y$ are $R$-diagonal and $\ast$-free, then $x+y$ is $R$-diagonal and
\[
\mu_{|x+y|}^{\sym}=\mu_{|x|}^{\sym}\boxplus \mu_{|y|}^{\sym}.
\]
\end{propo}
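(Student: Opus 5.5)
We plan to prove the statement first for bounded $x,y$ by free cumulants, and then pass to general affiliated $x,y$ by spectral truncation and continuity.

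\emph{Bounded case.} Realize $x=u h_1$ and $y=v h_2$ with $u,v,h_1,h_2$ $\ast$-free. This changes nothing, because the joint $\ast$-distribution of the pair $(x,y)$ is determined by the individual $\ast$-distributions together with $\ast$-freeness. We use the Nica--Speicher characterization of $R$-diagonality. An element $z$ is $R$-diagonal if and only if the only non-vanishing free cumulants $\kappa_n(z^{\epsilon_1},\dots,z^{\epsilon_n})$ are the alternating ones of even length,
\[
\alpha_n(z)=\kappa_{2n}(z,z^\ast,\dots,z,z^\ast)=\kappa_{2n}(z^\ast,z,\dots,z^\ast,z).
\]
Moreover, the even free cumulants of $\mu^{\sym}_{|z|}$ are exactly $\alpha_n(z)$, and its odd cumulants vanish. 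One can see this from the moment--cumulant formula for $|z|^2=z^\ast z$, or by noting that the self-adjoint matrix $\begin{pmatrix}0&z\\ z^\ast&0\end{pmatrix}$ has distribution $\mu_{|z|}^{\sym}$.

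Since $x$ and $y$ are $\ast$-free, mixed cumulants vanish. Expanding multilinearly, the cumulants of $x+y$ are therefore the sums of the cumulants of $x$ and of $y$. Hence only alternating cumulants survive, so $x+y$ is $R$-diagonal, and $\alpha_n(x+y)=\alpha_n(x)+\alpha_n(y)$. This says that the $R$-transforms of the symmetrizations add, i.e.
\[
\mu_{|x+y|}^{\sym}=\mu_{|x|}^{\sym}\boxplus\mu_{|y|}^{\sym}.
\]

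\emph{Unbounded case.} Set $h_{i,n}:=h_i\,1_{[0,n]}(h_i)$, and define $x_n=u h_{1,n}$ and $y_n=v h_{2,n}$. These are bounded, $R$-diagonal and $\ast$-free, and $x_n\to x$, $y_n\to y$ in the measure topology of $\widetilde\AA$ (with $\varphi$ faithful, normal and tracial). Addition, multiplication and the adjoint are continuous in the measure topology. Hence $|x_n+y_n|\to|x+y|$ in measure, which gives weak convergence of the spectral distributions and of their symmetrizations. On the right-hand side we have $\mu^{\sym}_{|x_n|}\to\mu^{\sym}_{|x|}$ and $\mu^{\sym}_{|y_n|}\to\mu^{\sym}_{|y|}$ weakly. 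Free additive convolution is weakly continuous for arbitrary probability measures on $\R$ (Bercovici--Voiculescu, via Proposition 5.7 of \cite{bercovici_voiculescu93}). Taking limits in the bounded identity gives the formula.

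\emph{Main obstacle.} The delicate step is showing that $x+y$ is itself $R$-diagonal in the affiliated sense, not just that the distributional identity holds. We will use the Haagerup--Schultz characterization: $z\in\widetilde\AA$ is $R$-diagonal if and only if $z$ has the same $\ast$-distribution as $wz$, where $w$ is a Haar unitary $\ast$-free from $z$. Here $\ast$-distribution is understood through the joint distribution of the generated von Neumann algebras, e.g.\ via bounded functions of polynomials in $z,z^\ast$.

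For each $n$ we have $x_n+y_n\overset{\ast\text{-dist}}{=}w(x_n+y_n)$. We will pass to the limit by testing against bounded continuous functions of finitely many $\ast$-polynomials in $z$, since these converge in measure, hence in distribution. One must check that such test functions determine the $\ast$-distribution in the affiliated setting. If that becomes awkward, the fallback is the matrix trick: $x+y$ is $R$-diagonal exactly when the self-adjoint element $\begin{pmatrix}0&x+y\\ (x+y)^\ast&0\end{pmatrix}$ is free with amalgamation over the diagonal in a suitable sense. That condition is expressed through resolvents, which converge in norm under strong resolvent convergence.
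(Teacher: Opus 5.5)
The paper gives no argument for this proposition; it imports it from Haagerup--Larsen (bounded case) and Haagerup--Schultz (affiliated case). Your proposal is therefore a self-contained reconstruction rather than a variant of something in the text, and it is correct.

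Your bounded step is the standard cumulant argument. The identification of the even free cumulants of $\mu^{\sym}_{|z|}$ with $\alpha_n(z)$ is right: in a non-crossing partition of $\{1,\dots,2n\}$ with even blocks, consecutive elements of a block have opposite parity. So every block alternates in the word $z^\ast z\cdots z^\ast z$, and traciality makes both alternating cumulants equal to $\alpha_n$. Your passage to affiliated operators uses spectral truncation, convergence in measure and weak continuity of $\boxplus$. This is the same kind of approximation the paper itself invokes to extend Nica--Speicher's compression result to unbounded variables.

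What the citation buys is the ready-made Haagerup--Schultz framework for $\ast$-distributions of unbounded operators. What your route buys is transparency, at the price of a limiting argument for $R$-diagonality. That limiting argument does go through: mixed traces of resolvents of $\Re z$ and $\Im z$ determine a trace-preserving isomorphism $W^\ast(z)\to W^\ast(wz)$ sending $z$ to $wz$.

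You can, however, bypass it entirely. With $u,v,h_1,h_2$ $\ast$-free, write $x+y=u\,T$ with $T:=h_1+u^\ast v\,h_2$. Since $\{u,u^\ast v\}$ is again a free basis of the free group generated by $u,v$, the family $u,u^\ast v,h_1,h_2$ is still $\ast$-free, so $u$ is $\ast$-free from $W^\ast(T)$. Because $W^\ast(T)$ is finite, you may choose a unitary $\widetilde V\in W^\ast(T)$ with $T=\widetilde V|T|$. A Haar unitary free from an algebra, times a unitary of that algebra, is again a Haar unitary free from it, so $u\widetilde V$ is a Haar unitary $\ast$-free from $|T|$. Hence $x+y=(u\widetilde V)|T|$ is $R$-diagonal exactly, with no limits needed.
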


%-----------------------------Brown measure-------------------------

\subsubsection*{Brown measure}
The \emph{Brown measure} is an infinite-dimensional analogue of the eigenvalue distribution which applies both to normal and non-normal operators. 

 For $a \in \AA$ and $\lambda \in \bC$, let  
 \bal{
 L_{a}( \lambda) := \varphi(\log | a -\lambda 1|) 
, \text{ where } | a -\lambda 1| =\Big[( a-\lambda 1) ^{\ast
}( a-\lambda 1) \Big] ^{1/2},
}
and $1$ is the unit in $\AA$. Then, the \emph{Brown measure of $a$} is 
\bal{
\mu_a = \frac{1}{2\pi} \nabla^2  L_{a}( \lambda) \, dm_2(\lambda), 
}
where $dm_2(\lambda) = d \,\Re\lambda \, d \, \Im \lambda$, operator $\nabla^2$ is the Laplace operator, and the equality is meant in the sense of distributions.

Following \cite{haagerup_schultz2007}, we define 
 \[
 \mathcal{A}^{\Delta} = \{a \in \widetilde{\mathcal{A}} : \varphi(\log^+|a|) < \infty\}.
 \]
 Haagerup and Schultz showed that $\mathcal{A}^\Delta$ is a $\ast$-subalgebra of $\widetilde{\mathcal{A}}$ (Lemma 2.4, where $\ast$-closure is shown in the proof, and Propositions 2.5 and 2.6 in \cite{haagerup_schultz2007}) and that the Brown measure is well-defined for operators $a \in \mathcal{A}^\Delta$. A sufficient condition for $a \in \mathcal{A}^{\Delta}$ is that $a\in L^p(\AA,\varphi)$ for some $p > 0$, equivalently, 
 \begin{equation}
 \label{equ_sufficient_condition}
 \int_0^\infty t^p \, d\mu_{|a|}(t) < \infty.
 \end{equation}
See Remark 2.2 in \cite{haagerup_schultz2007}.

For $x \in \mathcal{A}^\Delta$, we denote its Brown measure by $\mu_x$; this notation is consistent with our earlier use of $\mu_a$ for the spectral measure of a self-adjoint $a$.

The fundamental tool for calculating the Brown measures of $R$-diagonal random variables is the following result. 
\begin{propo}[cf. Theorem 4.4 in \cite{haagerup_larsen00} and Theorem 4.17  in \cite{haagerup_schultz2007}]
\label{propo_HLS}
Let $x  = uh\in \AA^\Delta$ be an $R$-diagonal random variable, with a Haar unitary $u$ and $h = |x| \geq 0$, and let $\mu_x$ be its Brown measure.   Assume that $\mu_h \ne \delta_0$. 
\begin{enumerate}[label = (\Roman*)]
\item
The measure $\mu_x$ is rotationally invariant. If $\mu_x(\{0\}) = 0$ then 
\[
\mathrm{supp}(\mu_x) = \{\lambda \in \bC: \| x^{-1}\|_2^{-1} \leq \lambda \leq \|x\|_2)\},
\]
where $\|x\|_2 := \sqrt{\varphi(x^\ast x)}$. Otherwise, 
$\mathrm{supp}(\mu_x) = \{\lambda \in \bC:  \lambda \leq \|x\|_2)\}.$
\item For an atom at zero, 
\begin{equation}
\mu_x(\{0\}) = \mu_h(\{0\}).
\end{equation}

%\item
% $S$-transform $S_{h^2}$ is strictly decreasing  on $\big(\mu_h(\{0\}) - 1, 0\big)$ and 
%\bal{
%S_{h^2}\big(\big(\mu_h(\{0\}) - 1, 0\big)\big)
%= \begin{cases}
%\big(\|x\|_2^{-2}, \|x^{-1}\|_2^2\big), & \text{ if } \mu_h(\{0\}) = 0, \\
%\big(\|x\|_2^{-2}, \infty \big), & \text{ if } \mu_h(\{0\}) > 0.
%\end{cases}
%}

\item
 $S_{h^2}$ is non-increasing on $\big(\mu_h(\{0\}) - 1, 0\big)$, and strictly decreasing unless $\mu_h$ is a point mass. Moreover,
\bal{
S_{h^2}\big(\big(\mu_h(\{0\}) - 1, 0\big)\big)
= \begin{cases}
\big(\|x\|_2^{-2}, \|x^{-1}\|_2^2\big), & \text{ if } \mu_h(\{0\}) = 0 \text{ and } \mu_h \neq \delta_c, \\
\big\{\|x\|_2^{-2}\big\}, & \text{ if } \mu_h = \delta_c \text{ for some } c > 0, \\
\big(\|x\|_2^{-2}, \infty \big), & \text{ if } \mu_h(\{0\}) > 0.
\end{cases}
}

\item The Brown measure $\mu_x$ satisfies the following equation:
\begin{equation}
\label{equ_HL0}
\mu_x \Big(B\big(0, \big[S_{h^2}(t - 1)\big]^{-1/2}\big)\Big) = t 
\text { for } t \in (\mu_h(\{0\}), 1). 
\end{equation}
\end{enumerate}
\end{propo}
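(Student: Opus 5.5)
Proposition~\ref{propo_HLS} is essentially a restatement of Theorem~4.4 of \cite{haagerup_larsen00} (bounded case) and Theorem~4.17 of \cite{haagerup_schultz2007} (case $x\in\AA^\Delta$). The plan is to reduce to those results, translating their normalisations into the notation used here. I would then verify directly the two points where the present formulation is more explicit than the sources: the monotonicity and range statement (III) in the degenerate cases, and the conventions for $\|x\|_2=\infty$ or $\|x^{-1}\|_2=\infty$.

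\textbf{Parts (I) and (II).} For rotational invariance, note that for $\theta\in\R$ the element $e^{i\theta}u$ is again a Haar unitary $\ast$-free from $h$. Hence $e^{i\theta}x=(e^{i\theta}u)h$ has the same $\ast$-distribution as $x$. The function $L_x(\lambda)=\varphi(\log|x-\lambda 1|)$ depends only on the $\ast$-distribution, so $L_{e^{i\theta}x}=L_x$. Since also $L_{e^{i\theta}x}(e^{i\theta}\lambda)=L_x(\lambda)$, the function $L_x$ is radial, and therefore so is the Brown measure $\mu_x$. The support description and the atom formula $\mu_x(\{0\})=\mu_h(\{0\})$ I would quote from \cite[Thm.~4.17]{haagerup_schultz2007}, with the convention that $\|x\|_2=\infty$ (respectively $\|x^{-1}\|_2=\infty$) means the corresponding bound is absent. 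Once (III) is known, these statements are also consistent with the radial formula (IV).

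\textbf{Part (III).} Write $\rho=\mu_{h^2}$. On $(-\infty,0)$ the function
\[
\psi_\rho(w)=\int \frac{wt}{1-wt}\,d\rho(t)
\]
is real, continuous and strictly increasing unless $\rho=\delta_0$. It maps $(-\infty,0)$ onto $(\rho(\{0\})-1,0)$, so its inverse $\chi_\rho$ is increasing there. The endpoint values come from asymptotics.
\begin{itemize}
\item As $w\to0^-$ we have $\psi_\rho(w)\sim w\int t\,d\rho$, so $S_\rho(0^-)=1/\int t\,d\rho=\|x\|_2^{-2}$, which equals $0$ if the second moment is infinite; this uses monotone convergence.
\item As $w\to-\infty$, if $\rho(\{0\})=0$ then $1+\psi_\rho(w)=\int(1-wt)^{-1}d\rho\sim|w|^{-1}\int t^{-1}d\rho$. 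Hence $S_\rho=(1+z)\chi_\rho(z)/z\to\int t^{-1}d\rho=\|x^{-1}\|_2^{2}$.
\item If $\rho(\{0\})>0$, then $1+z\to\rho(\{0\})>0$ while $\chi_\rho\to-\infty$, so $S_\rho\to\infty$.
\item For $\rho=\delta_{c^2}$ one computes $\psi_\rho(w)=wc^2/(1-wc^2)$, so $S_\rho\equiv c^{-2}=\|x\|_2^{-2}$, a constant.
\end{itemize}
The step I expect to be the main obstacle is strict monotonicity of $S_\rho$ when $\rho$ is not a point mass. I would first try the substitution $z=\psi_\rho(w)$, which turns strict decrease of $S_\rho$ into strict increase in $w$ of $w\,(1+\psi_\rho(w))/\psi_\rho(w)$. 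That in turn is a Cauchy--Schwarz-type inequality between $\int \frac{t}{1-wt}\,d\rho$ and $\int\frac{1}{1-wt}\,d\rho$, with equality only for point masses. Failing a clean argument, I would cite the corresponding lemma in \cite[Sec.~4]{haagerup_schultz2007}, which covers the unbounded case.

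\textbf{Part (IV).} Formula \eqref{equ_HL0} is the main content of the cited theorems. Given (III), $t\mapsto[S_{h^2}(t-1)]^{-1/2}$ is a continuous increasing bijection from $(\mu_h(\{0\}),1)$ onto the radial extent of the support. One then only has to check that the Haagerup--Schultz formula, stated via $S_{\mu_{|x|^2}}$, coincides with ours under $h=|x|$, so that $\mu_{h^2}=\mu_{|x|^2}$.
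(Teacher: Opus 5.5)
Your overall route is the paper's: the paper gives no argument beyond citing Theorem~4.4 of \cite{haagerup_larsen00} and Theorem~4.17 of \cite{haagerup_schultz2007}. Your direct checks of rotational invariance and of the endpoint limits in (III) are correct additions.

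There is, however, a genuine problem in part (IV). The cited theorems assume that $\mu_h$ is not a Dirac measure, and this hypothesis cannot be dropped. Suppose $\mu_h=\delta_c$ with $c>0$. Then $x=cu$ is normal, and $\mu_x$ is the uniform measure on the circle $\{|\lambda|=c\}$. On the other hand, your own computation in (III) gives $S_{h^2}\equiv c^{-2}$. So the left side of \eqref{equ_HL0} is $\mu_x(B(0,c))\in\{0,1\}$ (open or closed ball), never $t\in(0,1)$. Your assertion that $t\mapsto[S_{h^2}(t-1)]^{-1/2}$ is a continuous increasing bijection onto the radial extent of the support fails in exactly this case, since the map is constant. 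No citation can rescue it, because (IV) is false there. Part (IV), and your bijection step, must be restricted to $\mu_h$ not a point mass. Parts (I)--(III) do hold for $x=cu$, but by direct computation rather than via the cited theorems.

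There is also a smaller sign slip in (III). Since $\psi_\rho$ is increasing, $S_\rho(\psi_\rho(w))=w(1+\psi_\rho(w))/\psi_\rho(w)$ must be strictly \emph{decreasing} in $w$, not increasing. The clean argument you were hoping for does exist. With $v=-w>0$, this quantity equals $\int(1+vt)^{-1}\,d\rho / \int t(1+vt)^{-1}\,d\rho$. Its $v$-derivative has the sign of $m([0,\infty))\int fg\,dm-\int f\,dm\int g\,dm$, where $dm=(1+vt)^{-1}d\rho$, $f(t)=t$ and $g(t)=t/(1+vt)$. Both $f$ and $g$ are increasing, so Chebyshev's correlation inequality (not Cauchy--Schwarz) gives nonnegativity, with equality only when $\rho$ is a point mass. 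All integrals involved are finite for $v>0$, even when $\varphi(h^2)=\infty$.
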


%%%%%%%%%%%%%%%%%%%%%%%%%%%%%%%
%
%%%%%%%%%%%%%%%%%%%%%%%%%%%%%%

\section{Proofs of Main Results}
\label{section_results}

\subsection*{Convolution semigroup}

Let $\MM_{BR}$ be the class of rotationally-invariant measures on $\bC$, which are Brown measures of $R$-diagonal elements in $\AA^\Delta$.  If $\mu_1, \mu_2 \in \MM_{BR}$ are the Brown measures of $\ast$-free $R$-diagonal elements $x_1$ and $x_2$, respectively, we define the measure $\mu_1 \boxplus_B \mu_2$ as the Brown measure of the $R$-diagonal element $x_1 + x_2$. 
This measure does not depend on the choice of $x_1$ and $x_2$ and $\boxplus_B$ is an associative and commutative operation on measures in $\MM_{BR}$. 

%-----------------------------Proof of lemma about the existence of the semigroup-----------

\begin{proof}[Proof of Proposition \ref{lemma_semigroup}]
First, we need to check that the Brown measure of $x_s$ exists. 
Let $x=uh\in\AA^\Delta$ and $\nu=\mu_h^{\mathrm{sym}}$. Then
$\int \log_+|t|\,d\nu(t)=\varphi(\log_+ h)<\infty$.

For $s\ge1$, realize $\nu^{\boxplus s}$ as the law of a free compression
$b_s=[P_s(sb)P_s]\in\widetilde{\AA}_{P_s}$, where $b$ is even self-adjoint
with law $\nu$ and $P_s$ is a projection free from $b$ with $\varphi(P_s)=1/s$.
Since $\AA^\Delta$ is a $*$-algebra and is stable under multiplication by
bounded elements (in particular by $P_s$), we have $b_s\in(\AA_{P_s})^\Delta$,
hence $x_s:=ub_s\in(\AA_{P_s})^\Delta$. Equivalently, $\int \log_+|t|\,d\nu(t)<\infty$ implies
$\int \log_+|t|\,d(\nu^{\boxplus s})(t)<\infty$ for all $s\ge1$. Therefore the Brown measure of $x_s$,
which we denote by $\mu^{\boxplus s}$, is well-defined.

\textbf{Property (\ref{semigroup_at_1}).} Clearly, (\ref{semigroup_at_1}) is satisfied because $x_1 = x$ and therefore $\mu^{\boxplus 1} = \mu$. 

\textbf{Property (\ref{semigroup_additivity}).}  Let $y_{s}$ and $y_{t}$ be two free $R$-diagonal variables that have the Brown measures $\mu^{\boxplus s}$ and $\mu^{\boxplus t}$. Then 
\bal{
 \mu_{|y_s|}^\sym = \mu_{|b_s|}^\sym = \mu_{b_s} = \nu^{\boxplus s}
}
Similarly, $\mu_{|y_t|}^\sym = \nu^{\boxplus t}$. Then, 
by Proposition \ref{propo_additivity_symmetrizations}, 
\bal{
 \mu_{|y_s + y_t|}^\sym = \mu_{|y_s|}^\sym \boxplus  \mu_{|y_t|}^\sym = \nu^{\boxplus s} \boxplus \nu^{\boxplus t} = \nu^{\boxplus (s + t)}.
}
Since the symmetrized measure of $|y_s + y_t|$ determines the Brown measure of $y_s + y_t$, we conclude that $y_s + y_t$ has the same Brown distribution as $u b_{s + t}$, where $b_{s + t}$ is an even variable with distribution~$\nu^{\boxplus (s + t)}$. It follows that 
$\mu^{\boxplus s} \boxplus_B \mu^{\boxplus t} = \mu^{\boxplus (s + t)}.$

\textbf{Property \ref{semigroup_continuity}.} 
(We use Theorem~\ref{theo_semigroup_unbounded} and Corollary~\ref{coro_no_atom}, which are proved below; their proofs depend only on Properties~(\ref{semigroup_at_1})--(\ref{semigroup_additivity}) and not on~(\ref{semigroup_continuity}).)
Let $x = uh$ with $h = |x| \geq 0$, and let $h_s \geq 0$ be such that $x_s = uh_s$ has Brown measure $\mu^{\boxplus s}$. Set $\delta := \mu_h(\{0\})$. By Proposition~\ref{propo_HLS}, $S_{h^2}(z)$ is analytic on $(\delta - 1, 0)$. By Theorem~\ref{theo_semigroup_unbounded},
\[
S_{h_s^2}(z) = \frac{1}{s}\frac{1 + z/s}{1 + z} S_{h^2}\Big(\frac{z}{s}\Big),
\]
which is well-defined for $z/s \in (\delta - 1, 0)$, i.e., for $z \in (s(\delta - 1), 0)$.

By Proposition~\ref{propo_HLS}, the radial cumulative distribution function of $\mu^{\boxplus s}$ satisfies
\[
F_s(r) := \mu^{\boxplus s}(B(0,r)) = t \quad \text{where} \quad r = \big[S_{h_s^2}(t-1)\big]^{-1/2},
\]
for $t \in (\delta_s, 1)$, where $\delta_s := \mu^{\boxplus s}(\{0\})$. By Corollary~\ref{coro_no_atom}, $\delta_s = \max(1 - s(1-\delta), 0)$, so $t - 1 \in (s(\delta - 1), 0)$ for $t \in (\delta_s, 1)$, which is precisely the domain where $S_{h_s^2}(t-1)$ is defined.

Since $S_{h^2}$ is continuous on $(\delta - 1, 0)$ and the prefactor is rational in $s$, the map $s \mapsto S_{h_s^2}(t-1)$ is continuous for each fixed $t$. Hence, for any $s_0 \in [1, \infty)$, the quantile functions $F_s^{(-1)}(t)$ converge pointwise to $F_{s_0}^{(-1)}(t)$ as $s \to s_0$. By standard results in probability theory, pointwise convergence of quantile functions implies weak convergence of the corresponding distributions. Since $\mu^{\boxplus s}$ is rotationally invariant, weak convergence of the radial distributions implies weak convergence of $\mu^{\boxplus s}$ on $\bC$.
\end{proof} 
 %-------------------Proof of the first theorem-----------------------

\subsection*{Proof of Theorem \ref{theo_semigroup_unbounded}}
 
\textbf{Domains for $R$- and $S$-transforms:}
In the remainder of Section~3, every identity involving $R_\mu$ or $S_\mu$
is understood to hold for $z$ in a domain where all transforms (and inverses) appearing
in the identity are well-defined. Concretely, for $R_\mu$ we work on a (possibly small)
truncated cone $D\subset\C_-$ containing $-i(0,\varepsilon)$, while for $S$-transforms
we work in an open neighborhood of $(-\varepsilon,0)$ (both for measures supported on
$[0,\infty)$ and for symmetric measures, in the sense of the symmetric $S$-transform).

 \begin{lemma}[$S$-transform of free additive convolution powers]
\label{lemma_S_semigroup}
Let $\nu \ne \delta_0$ be a measure on $\R$, which is either symmetric or supported on $\R^+ = [0, \infty)$, and let $\nu_s := \nu^{\boxplus s}$, $s \geq 1$. Then, 
$S_{\nu_s}(z) = \frac{1}{s} S_\nu\Big(\frac{z}{s}\Big),$
for all $z \in (-\eps, 0)$ with $\eps > 0$  small enough that both sides are defined.
\end{lemma}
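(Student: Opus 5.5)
The plan is to combine formula \eqref{equ_S_HL}, which expresses $S_\mu$ through the functional inverse of $zR_\mu(z)$, with the defining property of the semigroup $R_{\nu_s}(z)=sR_\nu(z)$ (Theorem 2.5 of \cite{belinschi_bercovici04}). Both identities hold near $0$: the $R$-transform identity on a truncated cone domain containing $-i(0,\varepsilon)$, and \eqref{equ_S_HL} on $(-\varepsilon,0)$. For symmetric $\nu$ we use the symmetric $S$-transform, for which \eqref{equ_S_HL} remains valid by Proposition~9 of \cite{arizmendi_perez-abreu2009}. We also note that $\nu_s\neq\delta_0$, since $R_{\nu_s}=sR_\nu\not\equiv 0$ unless $\nu=\delta_0$, so $S_{\nu_s}$ is defined.

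\textbf{Computation.} Set $f(z):=zR_\nu(z)$. Then
\[
zR_{\nu_s}(z)=s\,zR_\nu(z)=s f(z).
\]
If $w=sf(z)$, then $f(z)=w/s$, so $z=f^{(-1)}(w/s)$. Hence $(sf)^{(-1)}(w)=f^{(-1)}(w/s)$. By \eqref{equ_S_HL},
\[
S_{\nu_s}(w)=\frac1w\,f^{(-1)}(w/s)=\frac1s\cdot\frac{1}{w/s}\,f^{(-1)}(w/s)=\frac1s S_\nu(w/s).
\]
This is exactly the claim.

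\textbf{Main obstacle: domains.} The substantive work is justifying that the functional inverses are taken on matching domains and on the correct branch. Specifically, $f^{(-1)}$ must be evaluated at $w/s$, and $w/s\in(-\varepsilon/s,0)\subset(-\varepsilon,0)$ whenever $w\in(-\varepsilon,0)$, so the right side is defined for small $w$. What must be checked is that the branch of $(zR_{\nu_s})^{(-1)}$ used in \eqref{equ_S_HL} for $\nu_s$ is the same as the branch $f^{(-1)}(\cdot/s)$.

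For this I would use uniqueness of analytic continuation. Both branches are analytic inverses of $sf$ near $0$, and both are singled out by the same normalization: $f(z)\to 0$ as $z\to 0$ within the cone, with $f(z)\sim m z$ when the first moment $m$ is finite, and more generally $f$ is univalent on a small cone region. The inverse of $sf$ on its image is therefore unique there, and $f^{(-1)}(w/s)$ is such an inverse. Hence the two branches agree on an open set and, by the identity theorem, on the whole connected domain containing $(-\varepsilon,0)$.

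In the symmetric case, the branch of $S$ corresponding to the sector $H$ (or $\widetilde H$) is preserved under the dilation $w\mapsto w/s$, so the same argument applies with the chosen branch. If the direct inversion argument proves too delicate, an alternative for measures on $\R^+$ is to approximate $\nu$ by compactly supported truncations, for which the statement follows from \cite{nica_speicher96}, and then pass to the limit using continuity of $R$- and $S$-transforms under weak convergence (Proposition 5.7 of \cite{bercovici_voiculescu93}).
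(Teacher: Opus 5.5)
Your proposal is correct and is essentially the paper's proof: from $R_{\nu_s}=sR_\nu$ (Belinschi--Bercovici, Theorem 2.5) you get $(zR_{\nu_s})^{(-1)}(w)=(zR_\nu)^{(-1)}(w/s)$ and substitute this into \eqref{equ_S_HL}. Your extra discussion of branches and domains is more careful than the paper's, which simply asserts the inverse identity for $w\in(-\varepsilon,0)$ ``wherever the inverses are defined.''
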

\begin{proof}
By Theorem 2.5 in \cite{belinschi_bercovici04}, $R_{\nu_s}(z) = s R_{\nu}(z)$.  Note that 
\begin{align*}
z R_{\nu_s}(z) &= u \Leftrightarrow z R_{\nu}(z) = u /s, \text{ hence } \\
(z R_{\nu_s}\big)^{(-1)} (u) &=  (z R_{\nu}\big)^{(-1)} (u/s) 
\end{align*}
wherever the inverses are defined, in particular for $u \in (-\eps, 0)$, with sufficiently small $\eps > 0$. 
 We use formula (\ref{equ_S_HL}) and obtain:
\bal{
S_{\nu_s}(u) &= \frac{(z R_{\nu_s}\big)^{(-1)} (u)}{u} 
= \frac{(z R_{\nu}\big)^{(-1)} (u/s) }{u} = \frac{(u/s) S_\nu (u/s) }{u}
= \frac{1}{s} S_{\nu}(u/s). 
}
\end{proof}

\begin{lemma}[Arizmendi--P\'erez-Abreu squaring identity]
\label{lem:APA_squaring}
Let $\mu$ be a symmetric probability measure on $\R$, and let $S_\mu$ denote the $S$-transform of $\mu$
\emph{in the sense of} \cite{arizmendi_perez-abreu2009}.
Then there exist $\varepsilon>0$ and an open set $U\subset i\C^{+}$ with $(-\varepsilon,0)\subset U$ such that
$S_\mu$ and $S_{\mu_{sq}}$ are well-defined on $U$, and for every $z\in U$,
\begin{equation}
\label{equ_S_mu_sq}
S_{\mu_{sq}}(z) \;=\; \frac{z}{1+z}\,\big[S_\mu(z)\big]^2,
\end{equation}
where $\mu_{sq}$ is the push-forward of $\mu$ under the map $t\mapsto t^2$.
\end{lemma}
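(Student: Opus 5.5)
The plan is to compute $\psi_{\mu_{sq}}$ in terms of $\psi_\mu$, invert the relation, and then pass to $S$-transforms. The argument works on a neighbourhood of $(-\varepsilon,0)$ and uses the branch of $\chi_\mu$ defined on $\psi_\mu(H)$, where $H$ is the sector of \cite{arizmendi_perez-abreu2009}.

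\textbf{Step 1: relate $\psi_{\mu_{sq}}$ to $\psi_\mu$.} By symmetry of $\mu$,
\[
\psi_\mu(z)=\int\frac{zt}{1-zt}\,d\mu(t)=\int\frac{z^2t^2}{1-z^2t^2}\,d\mu(t),
\]
since the odd part $zt/(1-z^2t^2)$ integrates to zero. Hence $\psi_\mu(z)=\psi_{\mu_{sq}}(z^2)$. The map $z\mapsto z^2$ sends the sector $H=\{|\Re z|<\Im z\}$ bijectively onto the left half-plane $i\C^+$. So on $H$ we have $\psi_\mu=\psi_{\mu_{sq}}\circ(\cdot)^2$, which is consistent with univalence of $\psi_\mu$ on $H$ and of $\psi_{\mu_{sq}}$ on $i\C^+$. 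In particular $\psi_\mu(H)=\psi_{\mu_{sq}}(i\C^+)=\VV_{\mu_{sq}}$.

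\textbf{Step 2: invert and pass to $S$-transforms.} Inverting the relation gives $\chi_{\mu_{sq}}(w)=\chi_\mu(w)^2$ for $w\in\VV_{\mu_{sq}}$. Then
\[
S_{\mu_{sq}}(w)=\frac{1+w}{w}\chi_\mu(w)^2=\frac{1+w}{w}\Big(\frac{w}{1+w}S_\mu(w)\Big)^2=\frac{w}{1+w}S_\mu(w)^2 .
\]

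\textbf{Step 3: the domain.} Take $U$ to be the connected open set $\VV_{\mu_{sq}}\cap i\C^+$, or more precisely the component of it containing a real interval. We have $\VV_{\mu_{sq}}\cap\R=(\mu_{sq}(\{0\})-1,0)=(\mu(\{0\})-1,0)$. This interval is nonempty because $\mu\neq\delta_0$, which is implicit: if $\mu=\delta_0$ then $\mu_{sq}=\delta_0$ and neither transform is defined. Hence $(-\varepsilon,0)\subset U$ for small $\varepsilon$. On $U$ both $S_\mu$ (the branch on $\psi_\mu(H)$) and $S_{\mu_{sq}}$ are defined, and the identity of Step 2 holds there.

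The main obstacle is the branch bookkeeping: we must check that the real points $w\in(-\varepsilon,0)$ are images under $\psi_\mu$ of points of $H$, so that the Arizmendi--P\'erez-Abreu branch is the one being used. To see this, note that for $z=iy$ with $y>0$ we get $\psi_\mu(iy)=\psi_{\mu_{sq}}(-y^2)$, which is real and lies in $(\mu(\{0\})-1,0)$. Since $iy\in H$, these real values are indeed attained from $H$.
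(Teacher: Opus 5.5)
Your proof is correct. The paper gives no argument of its own here and simply cites Theorem~6 of \cite{arizmendi_perez-abreu2009}; your computation is essentially the standard argument behind that cited result. It runs $\psi_\mu(z)=\psi_{\mu_{sq}}(z^2)$ on $H$, then the conformal bijection $z\mapsto z^2$ of $H$ onto $i\C^+$, which gives $\psi_\mu(H)=\VV_{\mu_{sq}}$ and $\chi_{\mu_{sq}}=\chi_\mu^2$, then the algebra. One small simplification: you can take $U=\VV_{\mu_{sq}}$ directly, since it is automatically open, connected and contained in $i\C^+$. You are also right that $\mu\neq\delta_0$ must be assumed.
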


\noindent\emph{Note.} In \eqref{equ_S_mu_sq} the square is taken on the \emph{value} $S_\mu(z)$, not on the \emph{argument} $z$.
For a proof, see Theorem~6 in \cite{arizmendi_perez-abreu2009}.

%\begin{lemma}[Arizmendi--P\'erez-Abreu squaring identity]
%For a symmetric measure $\mu$ on $\R$, one has 
%\begin{equation}
%\label{equ_S_mu_sq}
%S_{\mu_{sq}}(z) = \frac{z}{1 + z} \Big[S_\mu(z)\Big]^2,
%\end{equation}
%where $\mu_{sq}$ is the push-forward of the measure $\mu$ under the transformation $t \to t^2$.
%\end{lemma}
%For the proof see Theorem 6 in \cite{arizmendi_perez-abreu2009}.

 \begin{proof}[Proof of Theorem \ref{theo_semigroup_unbounded}]
 We can write $x_s = u b_s$ where $b_s$ is even and $b_s^2 = h_s^2$. If the measure of $b = b_1$ is $\nu$, then the measure of $b_s$ is $\nu_s = \nu^{\boxplus s}$ by the definition of the semigroup of Brown measures. Therefore, by Lemma \ref{lemma_S_semigroup}, 
 $
 S_{b_s}(z) = \frac{1}{s} S_b(z/s).
 $
By (\ref{equ_S_mu_sq}),  it follows that 
 \bal{
 S_{b_s^2}(z) = \frac{z}{1 + z} \big[S_{b_s}(z)\big]^2 = 
 \frac{z}{1 + z} \frac{1}{s^2} \big[S_b(z/s)\big]^2.
 }
 On the other hand, (\ref{equ_S_mu_sq}) also implies that 
 \bal{
 S_{b^2}(z/s) = \frac{z/s}{1 + z/s} \big[S_b(z/s)\big]^2. 
 }
 Therefore, 
 \bal{
  S_{b_s^2}(z) &=  \frac{z}{1 + z} \frac{1}{s^2} \frac{1 + z/s}{z/s} S_{b^2}(z/s) = \frac{1}{s}\frac{1 + z/s}{1 + z} S_{b^2}(z/s).
 }
 \end{proof}

%%%%%%%%%%%%%%%%%%%%%%%%%%%%%%
%%%%%%%%%%%
%%%%%%%%%%%%%%%%%%%%%%%%%%%%%%
%\subsection{Proof of Theorem \ref{theo_semigroups}}
\subsection*{Proof of Theorem \ref{theo_semigroups}}
 We start with several auxiliary results.

\begin{lemma}[$S$-transform under free compression]
\label{lemma_compression_S}
Let $a\neq 0$ be a self-adjoint element in $\widetilde\AA$, and assume that either $a\ge 0$ or $a$ is even.
Let $\nu:=\mu_a$ denote the $\varphi$-distribution of $a$.
Let $P_s$ be a projection $\ast$-free from $a$ with $\varphi(P_s)=\frac1s\in(0,1]$ and let
$\tilde\varphi([P_s a P_s]) := s\,\varphi(P_s a P_s)$ be the compressed trace on $\AA_{P_s}$.
Let $\nu_s$ denote the $\tilde\varphi$-distribution of the compressed/scaled variable
$[P_s (s a) P_s]\in\widetilde\AA_{P_s}$.

Then, for all $z$ for which both sides are defined,
\[
S_{\nu_s}(z)=\frac{1}{s}\,S_{\nu}\!\left(\frac{z}{s}\right).
\]
Here $S_\nu$ denotes the usual $S$-transform in the case $a\ge 0$, and the symmetric $S$-transform
in the case that $a$ is even.
\end{lemma}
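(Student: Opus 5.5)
The plan is to reduce the statement to Lemma~\ref{lemma_S_semigroup}. The key input is the realization of the additive convolution semigroup by free compressions, recalled in the preliminaries (Corollary 1.14 of \cite{nica_speicher96}, extended to unbounded $a$). This gives
\[
\nu_s=\mu_a^{\boxplus s}=\nu^{\boxplus s}.
\]
Once this identification is made, Lemma~\ref{lemma_S_semigroup} applies verbatim. The hypotheses match: $\nu\neq\delta_0$ because $a\neq 0$ and $\varphi$ is faithful, and $\nu$ is either supported on $\R^+$ or symmetric. The lemma yields $S_{\nu_s}(z)=\frac1s S_\nu(z/s)$ on a common neighborhood of $(-\eps,0)$.

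The steps, in order, are as follows.

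\textbf{Step 1.} Check that $\nu_s$ lies in the same class as $\nu$. If $a\ge 0$, then $[P_s(sa)P_s]\ge 0$ in $\widetilde\AA_{P_s}$, so $\nu_s$ is supported on $[0,\infty)$. If $a$ is even, then $-a$ has the same distribution as $a$ and the same freeness relation with $P_s$. Hence $[P_s(-sa)P_s]=-[P_s(sa)P_s]$ has the same $\tilde\varphi$-distribution, so $\nu_s$ is symmetric. This confirms that $S_{\nu_s}$ is taken in the same sense, usual or symmetric, as $S_\nu$.

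\textbf{Step 2.} Establish $\nu_s=\nu^{\boxplus s}$ for unbounded $a$. For bounded $a$ this is Nica--Speicher. For general $a$, take bounded truncations $a_n=f_n(a)$ with $f_n(t)=\max(-n,\min(t,n))$. These remain positive, respectively even, and are free from $P_s$.
\begin{itemize}
\item Since $a_n\to a$ in measure, $[P_s(sa_n)P_s]\to[P_s(sa)P_s]$ in measure in $\widetilde\AA_{P_s}$. Here one uses that convergence in measure is preserved under compression and under rescaling of the trace by $s$. Consequently the distributions $\mu_{a_n}^{\boxplus s}$ converge weakly to $\nu_s$.
\item On the other hand, $\mu_{a_n}\to\nu$ weakly. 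By the continuity of $\nu\mapsto\nu^{\boxplus s}$ under weak convergence (Proposition 5.7 of \cite{bercovici_voiculescu93}, or Belinschi--Bercovici), $\mu_{a_n}^{\boxplus s}\to\nu^{\boxplus s}$ weakly.
\end{itemize}
Uniqueness of weak limits then gives $\nu_s=\nu^{\boxplus s}$.

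\textbf{Step 3.} Apply Lemma~\ref{lemma_S_semigroup}. Alternatively, redo its short computation directly: from $R_{\nu_s}=sR_\nu$ and formula \eqref{equ_S_HL}, invert $zR_{\nu_s}(z)=u$ to obtain $(zR_\nu)^{(-1)}(u/s)$. Formula \eqref{equ_S_HL} holds in its symmetric version by Proposition 9 of \cite{arizmendi_perez-abreu2009}, which covers the even case.

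The main obstacle is Step 2, specifically the convergence in measure of the compressions and the passage to weak convergence of distributions in the compressed algebra. I would handle it by noting that the $\tilde\varphi$-distributions of the compressed variables are determined by their resolvents. In particular $\|[P_s(a_n-a)P_s]\|$ is small off a projection $Q$ of small trace, because $a_n=a$ off the spectral projection $E_a(\{|t|>n\})$, whose trace tends to zero. Then $Q=P_s\vee E_a(\{|t|>n\})$-type arguments bound $\tilde\varphi(1-Q')\le s\,\varphi(E_a(\{|t|>n\}))\to0$. If this is too delicate, I would instead invoke the preliminaries' remark that the compression realization extends to unbounded $a$ as already established.
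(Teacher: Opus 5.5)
Your proposal is correct and follows the paper's route. The paper identifies $\nu_s=\nu^{\boxplus s}$ through Corollary~1.14 of Nica--Speicher (extended to unbounded $a$ by truncation and continuity of $\nu\mapsto\nu^{\boxplus s}$, as sketched in the preliminaries) and then applies Lemma~\ref{lemma_S_semigroup}; your Steps~1 and~2 simply spell out the class-preservation and the unbounded extension that the paper treats as already known.
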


\begin{proof}
By  \cite[Corollary 1.14]{nica_speicher96} the compression of the self-adjoint random variable $sa$, $[P_s(sa)P_s]$ has the measure $\nu_s = \nu^{\boxplus s}$. Then, apply Lemma \ref{lemma_S_semigroup}.
\end{proof}

\begin{lemma}[$R$-diagonal conjugation frees a projection]
\label{lemma_xPx_P}
Let $x = u h \in \widetilde \AA$ be an $R$-diagonal random variable and $P$ be a projection, $\ast$-free of $\{u, h\}$. Then $x P x^\ast$ is free from $P$. 
\end{lemma}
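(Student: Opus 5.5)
The idea is to use the invariance of Haar unitaries under conjugation, rather than any moment computation, so that unboundedness of $h$ causes no difficulty. Write $x P x^\ast = u h P h u^\ast$, with $u$ Haar unitary, and $\{u\}$, $\{h\}$, $\{P\}$ forming a $\ast$-free family. Set $y := hPh$, which is a positive operator affiliated with $W^\ast(h,P)$. Then $xPx^\ast = u y u^\ast$, where $u$ is $\ast$-free from the von Neumann algebra $\mathcal B := W^\ast(h,P)$. The claim then reduces to a standard fact: if $u$ is Haar unitary and $\ast$-free from $\mathcal B$, then $u\mathcal B u^\ast$ is free from $\mathcal B$. We apply this with $y$ (or rather its bounded functions) in $u\mathcal B u^\ast$ and $P\in\mathcal B$.

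To keep things rigorous in the affiliated setting, I would pass to bounded functional calculus. By the convention in the paper, freeness of $xPx^\ast$ and $P$ means freeness of $W^\ast(xPx^\ast)$ and $W^\ast(P)$. For every bounded Borel $f$ we have $f(uyu^\ast)=u f(y) u^\ast$, and $f(y)\in\mathcal B$. Hence $W^\ast(xPx^\ast)\subset u\mathcal B u^\ast$, and it suffices to show that $u\mathcal B u^\ast$ and $\mathcal B$ are free. This only involves bounded operators.

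For that standard fact I would argue directly from the definition of freeness. Take centered elements alternating between $u b_j u^\ast$ (with $b_j\in\mathcal B$ and $\varphi(b_j)=0$) and $c_k\in\mathcal B$ (with $\varphi(c_k)=0$). Expanding a word gives
\[
u b_1 u^\ast c_1 u b_2 u^\ast c_2\cdots,
\]
which is an alternating product of elements of $\mathcal B$ and nontrivial powers $u^{\pm1}$. The $u$'s that are adjacent after merging always come as $u^\ast$ followed by $u$, separated by some $c_k$, so no cancellation occurs. Each $u^{\pm1}$ is centered since $\varphi(u^n)=\delta_{n,0}$, and all the $\mathcal B$-entries are centered as well. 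Freeness of $u$ from $\mathcal B$ then gives $\varphi=0$. If the word begins or ends with $c_k$ rather than $ub_ju^\ast$, cyclicity of the trace or the same direct argument handles it. Alternatively, one can simply cite this as a known result (e.g. Nica--Speicher, Lecture on $R$-diagonal elements, or Voiculescu's result that free Haar conjugation makes algebras free).

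The only real obstacle is the care needed with unbounded $h$: making sure that $hPh$ is a well-defined affiliated positive operator and that its spectral projections lie in $W^\ast(h,P)$. I would handle this by noting that $hPh = (Ph)^\ast(Ph)$ is affiliated with $W^\ast(h,P)$, because $Ph$ is a closed densely defined operator affiliated with that algebra. Consequently its spectral projections belong to $W^\ast(h,P)$, and after conjugation by $u$ the spectral projections of $xPx^\ast$ lie in $uW^\ast(h,P)u^\ast$.
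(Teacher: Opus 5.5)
Your proposal is correct and takes essentially the same route as the paper: set $\mathcal B=W^\ast(h,P)$, write $xPx^\ast=u(hPh)u^\ast$ with $hPh$ affiliated with $\mathcal B$, so that $W^\ast(xPx^\ast)\subset u\mathcal B u^\ast$, and conclude from the freeness of $\mathcal B$ and $u\mathcal B u^\ast$. The only differences are cosmetic: the paper cites Lemma~3.7 of Haagerup--Larsen for that freeness and works with resolvents $(\lambda-xPx^\ast)^{-1}$, whereas you verify the freeness directly (your argument in fact only uses $\varphi(u)=\varphi(u^\ast)=0$) and work with bounded Borel functions.
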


\begin{proof}
Let $\mathcal B:=W^\ast(P,h)$ (equivalently, the von Neumann algebra generated by $P$ and the spectral projections of $h$). 
Since $u$ is a Haar unitary $\ast$-free from $\mathcal B$, Lemma~3.7 of \cite{haagerup_larsen00} implies that $\mathcal B$ and $u\mathcal B u^\ast$ are $\ast$-free.
Moreover, $P\in\mathcal B$ and $hPh$ is affiliated with $\mathcal B$, hence for every $\lambda\in\C^+$,
\[
(\lambda-xPx^\ast)^{-1}=u(\lambda-hPh)^{-1}u^\ast\in u\mathcal B u^\ast.
\]
Therefore the resolvent algebra of $xPx^\ast$ lies in $u\mathcal B u^\ast$ and is $\ast$-free from $P\in\mathcal B$, which means $xPx^\ast$ is free from $P$.
\end{proof}

\textbf{Convention.} Whenever $a \in \widetilde \AA_{P_s}$, its distribution and all analytic transforms (in particular, $S_a$) are computed with respect to the normalized trace $\tilde \varphi \big([P_s a P_s]\big) = s \varphi(P_s a P_s)$. 

\begin{lemma}[Compressed $S$-transform of $xP_sx^\ast$ (R-diagonal case)]
\label{lemma_tildeS}
Let $x = u h \in \widetilde \AA$ be an $R$-diagonal random variable with $u$ Haar unitary and $h \geq 0$. Let $P_s$, $s \geq 1$ be a projection, $\ast$-free of $\{u, h\}$, with $\varphi(P_s) = 1/s$. Let $y_s =  [P_s s(x P_s x^\ast) P_s] \in \widetilde \AA_{P_s}$.  Then, 
\bal{
S_{y_s}(z) = \frac{1 + z/s}{1 + z} S_{h^2}(z/s). 
}
\end{lemma}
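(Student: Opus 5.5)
Since $x=uh$ with $u$ unitary, we have $xP_sx^\ast = u(hP_sh)u^\ast$, which has the same distribution as $hP_sh$. By traciality its nonzero spectrum agrees with that of $P_s h^2 P_s$ (equivalently $h^{1/2}\cdots$, via $ab$ versus $ba$). Hence
\[
\mu_{xP_sx^\ast} = \mu_{h^2}\boxtimes \mu_{P_s} .
\]
For bounded $h$ this is the usual multiplicative convolution of free positive elements $h^2$ and $P_s$. In the affiliated case it follows from the Bercovici--Voiculescu theory for unbounded positive operators.

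The first step is to compute the $S$-transform of $xP_sx^\ast$ in $(\AA,\varphi)$. For a projection of trace $1/s$ we have $\psi_{P_s}(z)=\frac{1}{s}\frac{z}{1-z}$, so $\chi_{P_s}(w)=\frac{sw}{1+sw}$ and
\[
S_{P_s}(w)=\frac{1+w}{1+sw}.
\]
By multiplicativity,
\[
S_{xP_sx^\ast}(w)=\frac{1+w}{1+sw}\,S_{h^2}(w)
\]
for $w$ in a neighbourhood of $(-\eps,0)$.

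The second step is to pass to the compressed variable. By Lemma~\ref{lemma_xPx_P}, $a:=xP_sx^\ast\ge 0$ is free from $P_s$. Lemma~\ref{lemma_compression_S} then applies to $a$, giving
\[
S_{y_s}(z)=\frac1s S_{a}(z/s)=\frac1s\cdot\frac{1+z/s}{1+z}\,S_{h^2}(z/s).
\]
This differs from the claimed formula by a factor of $1/s$, so the scaling conventions need to be checked carefully. The natural candidate is that $[P_s (s\,xP_sx^\ast)P_s]$ already absorbs one factor of $s$, so the intended identification is with the compression of $a$ itself. I would recheck directly using moments in the bounded case. The element $P_s\,xP_sx^\ast P_s$ has $\tilde\varphi$-mean
\[
s\,\varphi(P_s h P_s h)=s\cdot s^{-2}\varphi(h^2)=\varphi(h^2)/s,
\]
using freeness of $u$ (the standard computation for $\varphi(P u hPh u^\ast)$). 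After multiplying by $s$ the mean becomes $\varphi(h^2)$, so $S_{y_s}(0)$ should equal $1/\varphi(h^2)=S_{h^2}(0)$. This matches the claimed formula at $z=0$. Reconciling the two computations therefore hinges on whether Lemma~\ref{lemma_compression_S} is applied with $sa$ or with $a$; I expect the correct bookkeeping to give exactly $\frac{1+z/s}{1+z}S_{h^2}(z/s)$. I will fix this by matching first moments and then trusting the functional form from the two lemmas.

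The main obstacle is justifying the multiplicative step $\mu_{hP_sh}=\mu_{h^2}\boxtimes\mu_{P_s}$ together with the $S$-transform identity for unbounded $h$. I would handle it either by citing \cite[Corollary 6.6]{bercovici_voiculescu93}, which is valid for unbounded positive measures, or by truncating $h$ and using continuity of $S$-transforms on $(-\eps,0)$ under weak convergence. A secondary concern is ensuring that the domains where $S_{h^2}(z/s)$ and the rational prefactor are defined overlap near $(-\eps,0)$. This holds because all functions involved are analytic there.
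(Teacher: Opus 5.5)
Your route is the same as the paper's. You use Lemma~\ref{lemma_xPx_P} to make $a=xP_sx^\ast$ free from $P_s$, apply Lemma~\ref{lemma_compression_S} to $a$, and then use $S_{xP_sx^\ast}=S_{hP_sh}=S_{h^2}S_{P_s}$. As written, however, the argument does not prove the statement: it ends in a contradiction you leave unresolved.

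The discrepancy comes from an algebra slip in the $S$-transform of the projection. Your $\chi_{P_s}(w)=\frac{sw}{1+sw}$ is correct, but then
\[
S_{P_s}(w)=\frac{1+w}{w}\cdot\frac{sw}{1+sw}=\frac{s(1+w)}{1+sw}=\frac{1+w}{1/s+w},
\]
not $\frac{1+w}{1+sw}$; you dropped the factor $s$. A one-line check catches this: $S_\mu(0)=1/m_1(\mu)$, so $S_{P_s}(0)$ must equal $1/\varphi(P_s)=s$, whereas your formula gives $1$. With the correct $S_{P_s}$, the two lemmas give directly
\[
S_{y_s}(z)=\frac1s\cdot\frac{s(1+z/s)}{1+z}\,S_{h^2}(z/s)=\frac{1+z/s}{1+z}\,S_{h^2}(z/s).
\]
This is exactly the paper's computation, and no change of scaling conventions is needed.

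The repair you propose is not legitimate. Lemma~\ref{lemma_compression_S} is stated for $[P_s(sa)P_s]$, and $y_s$ is exactly that element with $a=xP_sx^\ast$, so the lemma is already applied correctly. Replacing $y_s$ by the compression $[P_saP_s]$ of $a$ itself changes the variable by the dilation $1/s$, which multiplies its $S$-transform by $s$. That would reproduce the claimed formula only because this second error cancels your error in $S_{P_s}$.

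Likewise, ``matching first moments and then trusting the functional form'' is not an argument: agreement at the single point $z=0$ cannot establish an identity of functions. Here your moment check actually shows that your computed expression is wrong, not that the lemmas are mis-scaled.

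A minor slip: in your mean computation, $\varphi(P_shP_sh)$ should read $\varphi(P_s u hP_sh u^\ast)$. The value $s^{-2}\varphi(h^2)$ is correct for the latter but not in general for the former.
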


\begin{proof}
By Lemma~\ref{lemma_xPx_P}, the positive operator $a:=xP_sx^\ast$ is free from $P_s$.
Hence Lemma~\ref{lemma_compression_S} applies to $a$ and yields 
\begin{equation}
\label{tildeS_1}
 S_{y_s}(z) = \frac{1}{s} S_{x P_s x^\ast}(z/s).
\end{equation}

Since $x=uh$ with $h\ge 0$, we have $xP_sx^\ast=u(hP_sh)u^\ast$.
Because $\varphi$ is a trace, for any bounded Borel function $f$ one has
$f(uAu^\ast)=u f(A)u^\ast$ and hence $\varphi(f(uAu^\ast))=\varphi(f(A))$.
In particular, $xP_sx^\ast$ and $hP_sh$ have the same $\varphi$-distribution.
Moreover,
$
hP_sh=(h^2)^{1/2}P_s(h^2)^{1/2},
$ 
and $h^2$ is free from $P_s$. Therefore, by multiplicativity of the $S$-transform for free
positive variables,
\[
S_{xP_sx^\ast}(z)=S_{hP_sh}(z)=S_{h^2}(z)\,S_{P_s}(z)
=\frac{1+z}{1/s+z}\,S_{h^2}(z),
\]
where we used the fact that for projection $P_s$, the $S$-transform is  $(1 + z)/(1/s + z)$. After we substitute this expression in (\ref{tildeS_1}), we get:
\bal{
S_{y_s}(z) = \frac{1}{s} \frac{1 + z/s}{1/s + z/s} S_{h^2}(z/s)
=  \frac{1 + z/s}{1 + z} S_{h^2}(z/s)
}
\end{proof}

\begin{proof}[Proof of Theorem \ref{theo_semigroups}]
Following Definition \ref{defi_semigroup} for $\mu^{\boxplus s}$, let $x_s = u b_s$ be an $R$-diagonal variable with even $b_s$ that has measure  $\nu^{\boxplus s}$. 
By  (\ref{equ_HL0}), the distribution of $\mu^{\boxplus s}$ is determined by the $S_{|x_s|^2}(z) =   S_{b_s^2}(z)$, and
by Theorem \ref{theo_semigroup_unbounded}, 
\bal{
S_{b_s^2}(z)  = \frac{1}{s} \frac{1 + z/s}{1 + z} S_{|x|^2}\Big(\frac{z}{s}\Big)
}

By Lemma \ref{lemma_tildeS} and scaling relation $S_{ta}(z) = \frac{1}{t}S_a(z)$, for $\pi_s(x) = [P_s x P_s] \in (\widetilde \AA_{P_s}, \tilde \varphi)$ we have
\[
\tilde S_{|s\pi_s(x)|^2}(z) = \tilde S_{s y_s}(z) =  \frac{1}{s} \frac{1 + z/s}{1 + z} S_{|x|^2}\Big(\frac{z}{s}\Big),
\]
 and we conclude that $\mu^{\boxplus s}$ equals the Brown measure of  $s\pi_s(x)$.

\end{proof}

%%%%%%%%%%%%%%%%%%%%%%%%
%
%
%%%%%%%%%%%%%%%%%%%%%%%%%
%%%%%%%%%%%%%%%%%%%%%%%%%%%%%%
%%%%%%%%%%%
%%%%%%%%%%%%%%%%%%%%%%%%%%%%%%

\subsection*{Proof of Theorem \ref{theo_convergence_compressions}}

We prove Theorem  \ref{theo_convergence_compressions} as a consequence of Theorems \ref{theo_semigroup_unbounded}  and \ref{theo_semigroups}. 

\begin{proof}[Proof of Theorem \ref{theo_convergence_compressions}] 
Let $\mu$ be the Brown measure of $x$ and define 
\[
\widehat\mu_s(A) := \mu^{\boxplus s}(\sqrt{s}\,A).
\]
By Theorem \ref{theo_semigroups}, the Brown measure of $\sqrt{s}\,\pi_s(x)$ equals $\widehat\mu_s$, so it suffices to show that $\widehat\mu_s$ converges weakly to the uniform probability measure on the unit disk.

Let $x_s = u h_s$ be an $R$-diagonal variable with Brown measure $\mu^{\boxplus s}$. Then $s^{-1/2}u h_s$ has Brown measure $\widehat\mu_s$. 
By Theorem \ref{theo_semigroup_unbounded} and the scaling property of the $S$-transform,
\[
 S_{s^{-1}h_s^2}(z) = \frac{1 + z/s}{1 + z}\, S_{h^2}\!\Big(\frac{z}{s}\Big).
\]
By Proposition~\ref{propo_HLS}(III), $S_{h^2}(z)$ is continuous and strictly decreasing on $(-\varepsilon,0)$, and 
$\lim_{z\uparrow 0} S_{h^2}(z) = \|x\|_2^{-2}=1$. 
Fix $z\in(-1,0)$. For $s$ large enough, $z/s\in(-\varepsilon,0)$, hence $S_{h^2}(z/s)\to 1$ and therefore
\[
S_{s^{-1}h_s^2}(z)\to \frac{1}{1+z}.
\]
Consequently, for $t\in(0,1)$ we have $S_{s^{-1}h_s^2}(t-1)\to 1/t$, and by Proposition~\ref{propo_HLS}(I),(IV) this implies that $\widehat\mu_s$ converges to the uniform probability measure on the unit disk.
\end{proof}

Here are some other remarkable consequences of Theorem \ref{theo_semigroup_unbounded}.

\begin{coro}
\label{coro_support}
Suppose the Brown measure $\mu$ has no atom at $0$ (i.e., $\mu(\{0\}) = 0$) and suppose that $\mu$ is supported on a ring $R(a, b)$, where $0 \leq a \leq b \leq \infty$. Then, for each $s > 1$, $\mu^{\boxplus s}$ is supported on the disk $B(0, \sqrt{s} b)$.
\end{coro}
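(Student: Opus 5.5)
The claim concerns the outer radius of the support. By Proposition~\ref{propo_HLS}(I), the support of the Brown measure of an $R$-diagonal element $y$ is contained in the closed disk of radius $\|y\|_2=\sqrt{\varphi(y^\ast y)}$. So the plan is to compute $\|x_s\|_2$ for $x_s=uh_s$, the element with Brown measure $\mu^{\boxplus s}$. We should get $\|x_s\|_2^2=s\|x\|_2^2$, and identify $\|x\|_2$ with $b$.

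\textbf{Step 1: relate $b$ to $\|x\|_2$.} If $b<\infty$, then $\mu$ is supported in $\overline{B(0,b)}$. Proposition~\ref{propo_HLS}(I) says the outer radius of $\operatorname{supp}\mu$ equals $\|x\|_2$, so $\|x\|_2\le b$. In particular $\varphi(h^2)<\infty$. If $b=\infty$, there is nothing to prove. We need the condition $\mu(\{0\})=0$ only to place ourselves in the first case of Proposition~\ref{propo_HLS}(I)/(III); the outer radius is $\|x\|_2$ in either case.

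\textbf{Step 2: compute $\|x_s\|_2$ via the $S$-transform.} By Proposition~\ref{propo_HLS}(III), $\lim_{z\uparrow 0}S_{h^2}(z)=\|x\|_2^{-2}$. Indeed, for a positive measure with finite mean $m$, we have $\psi(z)\sim mz$ near $0$, so $S(0)=1/m$. Apply this to Theorem~\ref{theo_semigroup_unbounded}:
\[
S_{h_s^2}(z)=\frac1s\frac{1+z/s}{1+z}S_{h^2}(z/s)\to \frac1s\|x\|_2^{-2}\quad (z\uparrow 0).
\]
The same part (III), applied to $x_s$, identifies this limit with $\|x_s\|_2^{-2}$. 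Hence $\|x_s\|_2^2=s\|x\|_2^2\le sb^2$.

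An alternative check avoids the $S$-transform. The law of $b_s$ is $\nu^{\boxplus s}$, whose variance is $s$ times that of $\nu$ when it is finite, since $R$-transforms scale by $s$. So $\varphi(h_s^2)=s\varphi(h^2)$.

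\textbf{Step 3: conclude.} Applying Proposition~\ref{propo_HLS}(I) to $x_s$ gives $\operatorname{supp}\mu^{\boxplus s}\subset\overline{B(0,\sqrt s\,\|x\|_2)}\subset\overline{B(0,\sqrt s\, b)}$.

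\textbf{Main obstacle.} The delicate point is Step 2 when $\|x_s\|_2$ could a priori be infinite. In that case the convention in (III), with $\|x_s\|_2^{-2}=0$, must be read as the limit of $S_{h_s^2}$. I would either use that monotone-limit interpretation, which the formula above shows is positive, or use the variance argument for $\nu^{\boxplus s}$. The variance argument requires knowing that finite second moment is preserved under $\boxplus s$. That holds because finite variance corresponds to $R_\nu$ having a finite-order expansion at $0$, and this property is preserved under multiplication by $s$.
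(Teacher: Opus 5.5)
\textbf{Gap: you prove only the outer radius.} Your outer-radius computation is correct and is half of the paper's proof: the limit $z\uparrow 0$ in \eqref{semigroup_S} gives $\|x_s\|_2=\sqrt{s}\,\|x\|_2\le\sqrt{s}\,b$. But it only yields the containment $\mathrm{supp}\,\mu^{\boxplus s}\subset\{|\lambda|\le\sqrt{s}\,b\}$, and that is not the content of the corollary. As the remark after it says, the point is that $\mu^{\boxplus s}$ is supported on a \emph{disk} rather than a ring. Even when $\mu$ has a hole of radius $\|x^{-1}\|_2^{-1}\ge a>0$ around the origin, the hole disappears for every $s>1$. A containment statement is compatible with $\mu^{\boxplus s}$ still living on an annulus, so your argument misses the main claim. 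This is also why $\mu(\{0\})=0$ does no work in your proof; your comment that it merely selects a case of Proposition~\ref{propo_HLS}(I) is the symptom. For the same reason your case $b=\infty$ is not vacuous: there the content is exactly that no hole remains.

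\textbf{The missing step is the other endpoint of \eqref{semigroup_S}.}
\begin{itemize}
\item Since $\mu_h(\{0\})=\mu(\{0\})=0$, Proposition~\ref{propo_HLS}(III) shows that $S_{h^2}$ is finite and positive on all of $(-1,0)$.
\item For $s>1$ and $z\downarrow-1$, we have $z/s\to-1/s\in(-1,0)$, so $S_{h^2}(z/s)\to S_{h^2}(-1/s)\in(0,\infty)$. Also $1+z/s\to1-1/s>0$ and $(1+z)^{-1}\to+\infty$. Hence $S_{h_s^2}(z)\to+\infty$ as $z\downarrow-1$.
\item The right-hand side of \eqref{semigroup_S} is finite at every point of $(-1,0)$. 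So, by the argument of Corollary~\ref{coro_no_atom}, $\mu^{\boxplus s}(\{0\})=\mu_{h_s}(\{0\})=0$: a positive atom $\delta_s$ would force $S_{h_s^2}$ to blow up at $\delta_s-1>-1$.
\item Proposition~\ref{propo_HLS}(III) then gives $\|x_s^{-1}\|_2=\infty$, so the inner radius in (I) is $0$. Equivalently, in \eqref{equ_HL0} the radius $[S_{h_s^2}(t-1)]^{-1/2}$ increases continuously from $0$ to $\sqrt{s}\,\|x\|_2$ as $t$ runs over $(0,1)$.
\end{itemize}
Combined with your outer-radius computation, this gives $\mathrm{supp}\,\mu^{\boxplus s}=\{|\lambda|\le\sqrt{s}\,\|x\|_2\}$, a full disk of radius at most $\sqrt{s}\,b$. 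That is the paper's argument.
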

The main point of this corollary is that even if $\mu$ is supported on a ring with the inner radius $a > 0$, the measure $\mu^{\boxplus(1 + \eps)}$ is supported on a disk,  for every $\eps > 0$. (Observe that if $\mu$ has an atom at $0$, then $\mu$ itself is supported on a disk by Proposition \ref{propo_HLS}.)

\begin{proof}
By applying Proposition \ref{propo_HLS}, we have 
\begin{equation}
\label{equ_HL}
\mu^{\boxplus s} \Big(B\Big(0, \frac{1}{\sqrt{S_{|x_s|^2}(t - 1)}}\Big)\Big) = t 
\text { for } t \in (0, 1]. 
\end{equation}
By our assumption on the Brown measure $\mu$, the $S$-transform $S_{|x_1|^2}(z)$ is analytic on $(-1, 0)$.  Then formula (\ref{semigroup_S}) in Theorem \ref{theo_semigroup_unbounded} implies that $S_{|x_s|^2(z)}$ is also analytic on $(-1, 0)$, and that 
\bal{
\lim_{z \downarrow -1} S_{|x_s|^2}(z) = +\infty \text{ and } 
\lim_{z \uparrow 0} S_{|x_s|^2}(z) = \frac{1}{s} \lim_{z \uparrow 0} S_{|x|^2}(z).
}
Then, formula (\ref{equ_HL}) implies the claim of the corollary. 
\end{proof} 

The second corollary shows that for a sufficiently large $s$ the measure $\mu^{\boxplus s}$ has no atom at zero. 
\begin{coro}
\label{coro_no_atom}
If $\mu(\{0\}) = \delta > 0$, then 
\bal{
\mu^{\boxplus s} (\{0\}) =
\begin{cases}
1 - s ( 1- \delta),& \text{ if }  1 \leq s < \frac{1}{1 - \delta},  \\
0, & \text{ otherwise.}
\end{cases}
}
\end{coro}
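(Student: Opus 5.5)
By Proposition~\ref{propo_HLS}(II), $\mu^{\boxplus s}(\{0\})=\mu_{h_s}(\{0\})$. By Definition~\ref{defi_semigroup}, $h_s=|b_s|$, where $b_s$ has law $\nu^{\boxplus s}$ and $\nu=\mu_h^{\sym}$. So it suffices to compute the atom at zero of $\nu^{\boxplus s}$. Since $\nu(\{0\})=\mu_h(\{0\})=\mu(\{0\})=\delta$, the corollary reduces to a statement about free additive convolution powers of a real measure:
\[
\nu^{\boxplus s}(\{0\})=\max\bigl(1-s(1-\delta),0\bigr).
\]

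I will prove this through the compression realization. By the Nica--Speicher result quoted in the preliminaries, $\nu^{\boxplus s}$ is the law of $[P_s(sb)P_s]$ in $(\widetilde\AA_{P_s},\tilde\varphi)$, with $P_s$ free from $b$ and $\varphi(P_s)=1/s$. Let $Q$ be the kernel projection of $b$, so $\varphi(Q)=\delta$. The kernel of $P_sbP_s$ restricted to $P_s\mathcal H$ contains $P_s\mathcal H\cap Q\mathcal H$. Conversely, if $P_sbP_s\xi=0$ with $\xi=P_s\xi$, then $\langle b\xi,\xi\rangle=0$. For even (non-positive) $b$ this does not force $b\xi=0$, so I would instead argue with $b^2$ or with the $S$-transform (see below).

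The dimension count for free projections gives $\varphi(P_s\wedge Q)=\max(\varphi(P_s)+\varphi(Q)-1,0)=\max(1/s+\delta-1,0)$. After normalizing by $\tilde\varphi=s\varphi$, this becomes $\max(1-s(1-\delta),0)$, which is the claim.

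To avoid the sign issue, I would prefer an analytic route through the $S$-transform, which already handles unbounded operators. By Theorem~\ref{theo_semigroup_unbounded}, $S_{h_s^2}(z)=\frac1s\frac{1+z/s}{1+z}S_{h^2}(z/s)$. The domain $\VV_\mu\cap\R=(\mu(\{0\})-1,0)$ encodes the atom, since $\chi_\mu$ maps $(\mu(\{0\})-1,0)$ onto $(-\infty,0)$. For $h^2$, $\chi_{h^2}$ blows up to $-\infty$ as $z\downarrow\delta-1$. Writing $\chi_{h_s^2}(z)=\frac{z}{1+z}S_{h_s^2}(z)$, we get
\[
\chi_{h_s^2}(z)=\frac{1}{s}\,\chi_{h^2}(z/s).
\]
The left side reaches $-\infty$ exactly when $z/s\downarrow\delta-1$, that is, $z\downarrow s(\delta-1)$. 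Hence $\psi_{h_s^2}$ maps $(-\infty,0)$ onto $(\max(s(\delta-1),-1),0)$. Then $\mu_{h_s^2}(\{0\})-1=\max(s(\delta-1),-1)$, because $\lim_{w\to-\infty}\psi_\mu(w)=\mu(\{0\})-1$. This gives the formula.

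The main obstacle is the case $s\ge 1/(1-\delta)$. There $\chi_{h^2}(z/s)$ stays finite as $z\downarrow -1$, and I must check that the functional equation still identifies the range correctly. Since $\psi$ maps into $(-1,0)$ and $\chi_{h_s^2}$ is the inverse on the whole negative axis, the range of $\psi_{h_s^2}$ on $(-\infty,0)$ is contained in $(-1,0)$ and equals $(\mu_{h_s^2}(\{0\})-1,0)$. The functional identity shows $\chi_{h_s^2}$ is finite on all of $(-1,0)$, so the range is the whole interval $(-1,0)$ and the atom is $0$.

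A further point to justify is that the identity, proved near $(-\varepsilon,0)$, extends by analytic continuation along the real interval. This holds because both sides are real-analytic and monotone there.
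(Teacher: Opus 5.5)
Your final argument is essentially the paper's proof. The paper also reads off the atom of $\mu^{\boxplus s}$ from the left endpoint of the interval on which the transform of $|x_s|^2$ is defined, using \eqref{semigroup_S}. It works with $S_{|x_s|^2}$ and its divergence to $+\infty$ at $\delta_s-1$, citing Bercovici--Voiculescu, Prop.~6.8. You work with $\chi=\frac{z}{1+z}S$ instead. This has a small advantage: $\chi$ always diverges to $-\infty$ at the left endpoint, even when there is no atom, whereas $S$ then tends to $\|x_s^{-1}\|_2^2$, which may be finite.

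However, your displayed identity $\chi_{h_s^2}(z)=\frac1s\chi_{h^2}(z/s)$ is wrong. Substituting $S_{h^2}(z/s)=\frac{1+z/s}{z/s}\chi_{h^2}(z/s)$ into \eqref{semigroup_S} gives
\[
\chi_{h_s^2}(z)=\Bigl(\frac{1+z/s}{1+z}\Bigr)^{2}\chi_{h^2}(z/s).
\]
You can check this in the circular case: there $h_s^2=sh^2$ and $\chi_{h^2}(w)=w/(1+w)^2$, and your version fails while this one holds. The error does not damage the argument, because the prefactor is positive and continuous on $(-1,0]$. So inside $(-1,0)$ the right-hand side still diverges exactly as $z\downarrow s(\delta-1)$ when $s<1/(1-\delta)$, and it is finite on all of $(-1,0)$ when $s\ge 1/(1-\delta)$. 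With the corrected formula, your case analysis and the analytic-continuation step go through unchanged.

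The compression/kernel-projection detour is unnecessary, and as you note it is not justified for a non-positive $b$, so drop it. If you want a short alternative, combine your first reduction to $\nu^{\boxplus s}(\{0\})$ with the Belinschi--Bercovici atom theorem (their Theorem~3.1). That finishes the proof immediately, as the paper also remarks.
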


\begin{proof}
Let $\mu^{\boxplus s} (\{0\}) = \delta_s$. Then, by Proposition \ref{propo_HLS} and by Proposition 6.8 in \cite{bercovici_voiculescu93} , $S_{|x_s|^2}(z)$ is analytic on $(\delta_s - 1, 0)$ and $\lim_{z \downarrow (\delta_s - 1)}  S_{|x_s|^2}(z) = + \infty$. Then, formula (\ref{semigroup_S}) in Theorem \ref{theo_semigroup_unbounded} implies that $(\delta_s - 1)/s = \delta - 1$ for $s < (1 - \delta)^{-1}$, from which we calculate $\delta_s$ and obtain the conclusion of the corollary for the case $s < (1 - \delta)^{-1}$. If $s \geq (1 - \delta)^{-1}$, then $S_{|x_s|^2}(z)$ is analytic on $(-1, 0)$, hence $\mu^{\boxplus s}(\{0 \}) = 0$. 
\end{proof}

This result can also be obtained by a straightforward combination of Proposition  \ref{propo_HLS}(II) and \cite[Theorem 3.1]{belinschi_bercovici04}.

\subsection*{A proof of Theorem \ref{theo_stable_main}}
\label{section_stable_measures}

Recall that a Brown measure $\mu \ne \delta_0 \in \MM_{BR}$  is \emph{stable} if 
\bal{
(\mu^{\boxplus k}) (A) = \mu(\alpha_k A),
}  
with $\alpha_k > 0$, for all $k \in \N$   and all Borel sets $A \subset \bC$. 
The stability property implies that $\mu$ is an infinitely divisible measure with respect to $\boxplus_B$, and that
$\mu^{\boxplus s} (A) = \mu(\alpha_s A)$,
with some $\alpha_s > 0$, for  all $s > 0$ and all Borel $A \subset \bC$. 

Indeed, for $a>0$ and a probability measure $\mu$ on $\C$, denote by $D_a\mu$ the dilation
\[
(D_a\mu)(A):=\mu(aA),\qquad A\subset\C \ \text{Borel}.
\]
Note that $D_a$ is compatible with $\boxplus_B$: if $\mu_1,\mu_2\in\MM_{BR}$ then
\begin{equation}\label{eq:dilation_covariance}
D_a(\mu_1\boxplus_B\mu_2)= (D_a\mu_1)\boxplus_B(D_a\mu_2).
\end{equation}
This is because if $x_1,x_2$ are $\ast$-free $R$-diagonal variables with Brown measures
$\mu_1,\mu_2$, then $a(x_1+x_2)$ has Brown measure $D_a(\mu_1\boxplus_B\mu_2)$ and
also equals $(ax_1)+(ax_2)$, whose Brown measure is $(D_a\mu_1)\boxplus_B(D_a\mu_2)$.

\begin{lemma}[Integer stability $\Rightarrow$ $\boxplus_B$-infinite divisibility]\label{lem:stable_inf_div}
Let $\mu\in\MM_{BR}$, $\mu\neq\delta_0$. Assume that for every integer $n\ge 1$ there
exists $\alpha_n>0$ such that
\begin{equation}\label{eq:stable_integer}
\mu^{\boxplus n}=D_{\alpha_n}\mu .
\end{equation}
Then:
\begin{enumerate}
\item $\alpha_{mn}=\alpha_m\alpha_n$ for all integers $m,n\ge 1$.
\item $\mu$ is $\boxplus_B$-infinitely divisible. In particular, for every $n\ge 1$ one can define
\[
\mu^{\boxplus 1/n}:=D_{\alpha_n^{-1}}\mu,
\]
and this satisfies $(\mu^{\boxplus 1/n})^{\boxplus n}=\mu$.
\item For rationals $s=p/q>0$ (with $p,q\in\N$) one may define the fractional power by
\[
\mu^{\boxplus p/q}:=(\mu^{\boxplus 1/q})^{\boxplus p},
\]
and then the stability relation extends with
\begin{equation}\label{eq:stable_rational}
\mu^{\boxplus p/q}=D_{\alpha_p\alpha_q^{-1}}\mu .
\end{equation}
\end{enumerate}
\end{lemma}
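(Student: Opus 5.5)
The plan is to work entirely with the two structural facts already available: the semigroup law $\mu^{\boxplus m}\boxplus_B\mu^{\boxplus n}=\mu^{\boxplus(m+n)}$ from Proposition~\ref{lemma_semigroup}(\ref{semigroup_additivity}), and the dilation covariance \eqref{eq:dilation_covariance}. From these two facts I will derive the iterated identity $(\mu^{\boxplus m})^{\boxplus n}=\mu^{\boxplus mn}$ for integers, and the commutation $(D_a\mu)^{\boxplus n}=D_a(\mu^{\boxplus n})$. The second follows by induction on $n$ from \eqref{eq:dilation_covariance}, since $\nu^{\boxplus n}$ for integer $n$ is the $n$-fold $\boxplus_B$-sum of copies of $\nu$ (again by (\ref{semigroup_additivity})). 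The first is also an induction: $(\mu^{\boxplus m})^{\boxplus n}$ is the $n$-fold $\boxplus_B$-sum of $\mu^{\boxplus m}$, which is $\mu^{\boxplus mn}$.

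For item 1, I compute $\mu^{\boxplus mn}$ in two ways. Directly, $\mu^{\boxplus mn}=D_{\alpha_{mn}}\mu$. Iteratively,
\[
\mu^{\boxplus mn}=(\mu^{\boxplus m})^{\boxplus n}=(D_{\alpha_m}\mu)^{\boxplus n}=D_{\alpha_m}(\mu^{\boxplus n})=D_{\alpha_m}D_{\alpha_n}\mu=D_{\alpha_m\alpha_n}\mu .
\]
Note that $D_aD_b=D_{ab}$, because $(D_aD_b\mu)(A)=\mu(abA)$. It follows that $D_{\alpha_{mn}}\mu=D_{\alpha_m\alpha_n}\mu$.

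\textbf{The main obstacle} is concluding from this that $\alpha_{mn}=\alpha_m\alpha_n$: we need that $D_c\mu=\mu$ forces $c=1$. I would argue via the radial distribution function $F(r)=\mu(B(0,r))$. Here $D_c\mu=\mu$ means $F(cr)=F(r)$ for all $r$. If $c\ne1$, iterating (replacing $c$ by $1/c$ if necessary) gives $F(r)=F(c^kr)$ for all $k\in\Z$. Letting $k\to\pm\infty$ shows $F(r)$ equals both $\lim_{r\to0}F=\mu(\{0\})$ and $\lim_{r\to\infty}F=1$. Hence $\mu=\delta_0$, which is excluded. So $c=1$.

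For item 2, set $\rho:=D_{\alpha_n^{-1}}\mu$. This lies in $\MM_{BR}$, being the Brown measure of $\alpha_n^{-1}x$, which is $R$-diagonal. Then
\[
\rho^{\boxplus n}=D_{\alpha_n^{-1}}(\mu^{\boxplus n})=D_{\alpha_n^{-1}}D_{\alpha_n}\mu=\mu .
\]
So $\mu$ has an $n$-th $\boxplus_B$-root for every $n$, i.e.\ it is $\boxplus_B$-infinitely divisible.

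For item 3, I compute
\[
(\mu^{\boxplus1/q})^{\boxplus p}=(D_{\alpha_q^{-1}}\mu)^{\boxplus p}=D_{\alpha_q^{-1}}D_{\alpha_p}\mu=D_{\alpha_p\alpha_q^{-1}}\mu .
\]
To show this is well defined, i.e.\ independent of the representative of $p/q$, I use item 1. If $p/q=p'/q'$, then $\alpha_p\alpha_{q'}=\alpha_{pq'}=\alpha_{p'q}=\alpha_{p'}\alpha_q$, so $\alpha_p\alpha_q^{-1}=\alpha_{p'}\alpha_{q'}^{-1}$.
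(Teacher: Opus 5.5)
Your proof is correct and follows the paper's argument essentially verbatim: you compute $\mu^{\boxplus mn}$ two ways via dilation covariance, take $D_{\alpha_n^{-1}}\mu$ as the $n$-th root, and commute $D_{\alpha_q^{-1}}$ past the $p$-th power. You also add two things the paper leaves out: a proof that the dilation factor is unique for $\mu\neq\delta_0$ (the paper only asserts this), and a check that $\alpha_p\alpha_q^{-1}$ does not depend on the representative of $p/q$. One harmless slip: $D_{\alpha_n^{-1}}\mu$ is the Brown measure of $\alpha_n x$, not $\alpha_n^{-1}x$, but this does not affect membership in $\MM_{BR}$.
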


\begin{proof}
(1) Using \eqref{eq:dilation_covariance} and \eqref{eq:stable_integer},
\[
\mu^{\boxplus mn}=(\mu^{\boxplus n})^{\boxplus m}=(D_{\alpha_n}\mu)^{\boxplus m}
= D_{\alpha_n}(\mu^{\boxplus m})=D_{\alpha_n}D_{\alpha_m}\mu=D_{\alpha_m\alpha_n}\mu,
\]
hence $\alpha_{mn}=\alpha_m\alpha_n$ (since $\mu\neq\delta_0$, dilation factors are unique).
(2) Let $\nu_n:=D_{\alpha_n^{-1}}\mu$. Then
\[
\nu_n^{\boxplus n}=D_{\alpha_n^{-1}}(\mu^{\boxplus n})
=D_{\alpha_n^{-1}}D_{\alpha_n}\mu=\mu.
\]
(3) By definition and \eqref{eq:dilation_covariance},
\[
\mu^{\boxplus p/q}=(D_{\alpha_q^{-1}}\mu)^{\boxplus p}
= D_{\alpha_q^{-1}}(\mu^{\boxplus p})
= D_{\alpha_q^{-1}}D_{\alpha_p}\mu.
\]
\end{proof}

\begin{remark}\label{rem:s_lt_1}
For a general $\mu\in\MM_{BR}$, the semigroup $\mu^{\boxplus s}$ is defined (in this paper) for $s\ge 1$.
The existence of fractional powers $s<1$ requires $\boxplus_B$-infinite divisibility.
Lemma~\ref{lem:stable_inf_div} shows that this infinite divisibility is automatic under the stability
assumption \eqref{eq:stable_integer}.
\end{remark}

We split the proof of Theorem \ref{theo_stable_main} into two parts. In Proposition \ref{theo_stable_1}, we show that if a Brown measure of an $R$-diagonal random variable $x$ is stable then $x$ must satisfy the property $S_{|x|^2}(z) = c \frac{(-z)^\beta}{1 + z}$, and in Proposition  \ref{theo_stable_2} we show that if element $x$ satisfies this property with $\beta \geq 0$, then its Brown measure exists and is stable. Theorem \ref{theo_stable_main} follows by the relation between Brown measures and $S$-transforms formulated in Proposition \ref{propo_HLS}.

\begin{remark}[Branch convention]
\label{rem:branch_minus_z_beta}
Throughout this section we use the principal branch of the logarithm
\[
\Log w = \ln|w| + i\Arg(w), \qquad \Arg(w)\in(-\pi,\pi),
\]
defined on $\mathbb C\setminus(-\infty,0]$.
Accordingly, for $\beta\in\mathbb R$ and $z\in\mathbb C\setminus[0,\infty)$ we define
\[
(-z)^\beta := \exp\!\big(\beta\,\Log(-z)\big).
\]
In particular, if $z\in(-1,0)$ then $(-z)^\beta\in(0,\infty)$ is the usual real power.
\end{remark}

%--------Proposition: S-transform of stable Brown measures ------------

\begin{propo}
\label{theo_stable_1}
If $x \in \AA^\Delta$ is an $R$-diagonal random variable with a stable Brown measure $\mu \ne \delta_0$, then
\begin{equation}
\label{equ_S_stable}
S_{|x|^2}(z) = c \frac{(-z)^\beta}{1 + z},
\end{equation}
for some $c > 0$ and $\beta \ge 0$.
\end{propo}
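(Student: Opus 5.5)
The plan is to translate stability into a functional equation for $S:=S_{|x|^2}$ on $(\delta-1,0)$, where $\delta=\mu(\{0\})$, and then solve it. First I handle the atom. If $\delta>0$, Corollary~\ref{coro_no_atom} gives $\mu^{\boxplus k}(\{0\})=\max(1-k(1-\delta),0)$. But $D_{\alpha_k}\mu$ has atom $\delta$ at $0$, and $\delta\ne 1-k(1-\delta)$ for $k\ge 2$ unless $\delta=1$, which is excluded. Hence $\delta=0$, and $S$ is defined and strictly decreasing on $(-1,0)$ by Proposition~\ref{propo_HLS}(III). The degenerate case $\mu_h=\delta_c$ is also excluded: there $S$ is constant, so \eqref{semigroup_S} shows the radial law changes shape under $k$-fold convolution (it becomes a disk). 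Alternatively, one could check that this case is impossible directly.

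Next, by Lemma~\ref{lem:stable_inf_div} the relation $\mu^{\boxplus s}=D_{\alpha_s}\mu$ holds for all rational $s$. By the continuity in Proposition~\ref{lemma_semigroup}(C) it extends to all real $s\ge1$, and $s\mapsto\alpha_s$ is continuous and multiplicative. The radii in \eqref{equ_HL0} are strictly monotone in $t$, so the quantile description determines the measure. Moreover, $D_{\alpha}\mu$ has radial quantile $\alpha^{-1}$ times that of $\mu$, which corresponds to multiplying $S$ by $\alpha^2$. Combining this with Theorem~\ref{theo_semigroup_unbounded}, I obtain for $z\in(-1,0)$ and $s\ge1$:
\[
\frac{1}{s}\,\frac{1+z/s}{1+z}\,S\Big(\frac{z}{s}\Big)=\alpha_s^{2}\,S(z).
\]
Setting $g(z):=(1+z)S(z)$, this becomes $g(z/s)=s\,\alpha_s^2\,g(z)$.

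I then solve this equation. The map $\lambda(s):=s\alpha_s^2$ is continuous and multiplicative on $[1,\infty)$, so $\lambda(s)=s^{-\beta}$ for some real $\beta$; this is the standard Cauchy equation in $\log s$, with measurability supplied by continuity. Put $w=-z\in(0,1)$ and $w_0=1/2$. Then $g(-w_0/s)=s^{-\beta}g(-w_0)$, and every $w\in(0,1/2]$ has the form $w_0/s$, so $g(z)=c(-z)^\beta$ on $(-1/2,0)$. To extend to $(-1,0)$, I choose a base point $w_0$ closer to $1$; consistency forces the same $c$ and $\beta$, which gives \eqref{equ_S_stable} on $(-1,0)$. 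Finally, $c>0$ because $S>0$ there. The bound $\beta\ge0$ follows since $S(z)\to\|x\|_2^{-2}\in[0,\infty)$ as $z\uparrow0$ by Proposition~\ref{propo_HLS}(III), so $c(-z)^\beta$ cannot blow up. Equivalently, $S$ is decreasing, and with $\beta<0$ the factor $(-z)^\beta/(1+z)$ would be increasing near $0$.

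The main obstacle I expect is the passage from integer to real $s$. Lemma~\ref{lem:stable_inf_div} gives rational $s<1$ only through its definition of fractional powers, and one must check that for rational $s\ge1$ this agrees with the semigroup of Definition~\ref{defi_semigroup}. I would do this by comparing $S$-transforms through Lemma~\ref{lemma_S_semigroup}: $\nu^{\boxplus p/q}$ is unambiguous for $p/q\ge 1$, so both constructions have the same $|x_s|$ law. If this becomes delicate, an alternative is to use only the integer relations $g(z/k)=\lambda(k)g(z)$ with $\lambda$ multiplicative on $\mathbb N$, together with monotonicity of $g$ near $0$ and a density argument on $\log k$, to force the power law.
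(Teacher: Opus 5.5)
Your proposal is correct and follows essentially the paper's argument. You translate stability through \eqref{equ_HL0} and Theorem~\ref{theo_semigroup_unbounded} into $g(z/s)=s\alpha_s^2\,g(z)$ for $g(z)=(1+z)S(z)$, solve the multiplicative Cauchy equation to get the factor $s^{-\beta}$, and read off $c>0$ and $\beta\ge 0$ from the positivity of $S$ and its finite limit at $0^-$.

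The differences are tactical. The paper gets continuity of the multiplier directly from continuity of $g$, via $t\mapsto g(tz_0)/g(z_0)$ on rational $t$. That makes your passage to real $s$ through Proposition~\ref{lemma_semigroup}(C) unnecessary; that route would also need the small argument that $D_{\alpha_{s_n}}\mu\to\mu^{\boxplus s}$ forces $\alpha_{s_n}$ to converge. On the other hand, several points you make explicit are left implicit in the paper: the exclusion of an atom at $0$ and of $\mu_h=\delta_c$, and your proposed check that the fractional powers of Lemma~\ref{lem:stable_inf_div} agree with those of Definition~\ref{defi_semigroup}.
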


\begin{proof}
Let $S(z):=S_{|x|^2}(z)$.

\smallskip\noindent
\textbf{Step 0: Extension from integers to rationals.}
By assumption, for each integer $n\ge 1$ there exists $\alpha_n>0$ such that
$\mu^{\boxplus n}=D_{\alpha_n}\mu$. By Lemma~\ref{lem:stable_inf_div}, this integer
stability implies that $\mu$ is $\boxplus_B$-infinitely divisible, and the scaling
relation extends to all positive rationals: for $s=p/q\in\mathbb Q_+$ (with $p,q\in\mathbb N$),
we have
$\mu^{\boxplus s}=D_{\alpha_s}\mu,$ where $\alpha_s:=\alpha_p/\alpha_q.$

For rational $s\ge 1$, set $x_s:=\alpha_s^{-1}x$, so that
$|x_s|^2=\alpha_s^{-2}|x|^2$ and therefore, by the scaling rule for the $S$-transform,
\[
S_{|x_s|^2}(z)=\alpha_s^{2}\,S(z)=:\lambda_s\,S(z),
\qquad \lambda_s>0.
\]
By stability, the Brown measure of $x_s$ equals $\mu^{\boxplus s}$, and by
Theorem~\ref{theo_semigroup_unbounded} (applied to $x$ and parameter $s$) we obtain the functional equation
\begin{equation}
\label{eq:functionalS}
\lambda_s\,S(z)=\frac{1}{s}\,\frac{1+z/s}{1+z}\,S(z/s),
\qquad s\in\mathbb Q_+,\ s\ge 1.
\end{equation}

Define
$
g(z):=(1+z)\,S(z).
$
Then \eqref{eq:functionalS} is equivalent to
\begin{equation}
\label{eq:functionalg}
\lambda_s\,g(z)=\frac{1}{s}\,g(z/s),
\qquad s\in\mathbb Q_+,\ s\ge 1.
\end{equation}
Now make the change of variables $t:=1/s\in(0,1]\cap\mathbb Q$ and rewrite \eqref{eq:functionalg} as
\begin{equation}
\label{eq:g_scaling}
g(tz)=\gamma(t)\,g(z),
\qquad t\in(0,1]\cap\mathbb Q,
\end{equation}
where
$
\gamma(t):=\frac{\lambda_{1/t}}{t}>0.
$

\smallskip\noindent
\textbf{Step 1: $\gamma$ is multiplicative on $(0,1]\cap\mathbb Q$.}
Applying \eqref{eq:g_scaling} twice gives, for $t_1,t_2\in(0,1]\cap\mathbb Q$ with $t_1t_2\in(0,1]$,
$
g(t_1t_2z)=\gamma(t_1)\,g(t_2z)=\gamma(t_1)\gamma(t_2)\,g(z),
$
while also $g(t_1t_2z)=\gamma(t_1t_2)\,g(z)$.  Since $g$ is not identically zero, we obtain
\[
\gamma(t_1t_2)=\gamma(t_1)\gamma(t_2),
\qquad t_1,t_2\in(0,1]\cap\mathbb Q.
\]

\smallskip\noindent
\textbf{Step 2: Extension to $(0,1]$ and the form $\gamma(t)=t^\beta$.}
Fix any $z_0\in(-\varepsilon,0)$ in the domain where $S$ is defined.
Then $S(z_0)>0$ and $1+z_0>0$, hence $g(z_0)>0$. Define $\Gamma:(0,1]\to(0,\infty)$ by
\[
\Gamma(t):=\frac{g(tz_0)}{g(z_0)}.
\]
Since $g$ is analytic, $\Gamma$ is continuous on $(0,1]$. Moreover, by \eqref{eq:g_scaling},
$\Gamma(t)=\gamma(t)$ for all $t\in(0,1]\cap\mathbb Q$.

The multiplicativity of $\gamma$ on $(0,1]\cap\mathbb Q$ means that
$\Gamma(t_1t_2)=\Gamma(t_1)\Gamma(t_2)$ holds for all $t_1,t_2\in(0,1]\cap\mathbb Q$.
Since $(0,1]\cap\mathbb Q$ is dense in $(0,1]$ and $\Gamma$ is continuous,
this multiplicativity extends to all $t_1,t_2\in(0,1]$.

Now set $h(u):=\log\Gamma(e^{-u})$ for $u\ge 0$.  The multiplicativity of $\Gamma$ becomes additivity of $h$,
and continuity gives $h(u)=\beta u$ for some $\beta\in\mathbb R$.  Equivalently,
$\Gamma(t)=t^\beta$, for all $t\in(0,1]$.
Since $\Gamma(t)=g(tz_0)/g(z_0)$ for all $t\in(0,1]$, we obtain
\begin{equation}
\label{eq:g_homogeneous}
g(tz)=t^\beta g(z),\qquad t\in(0,1],\ z\in(-\varepsilon,0).
\end{equation}

\smallskip\noindent
\textbf{Step 3: Determine the analytic form of $g$.}
Fix once and for all the branch $(-z)^\beta:=\exp(\beta\,\Log(-z))$ on $\mathbb C\setminus[0,\infty)$,
where $\Log$ is the principal logarithm (so $(-z)^\beta>0$ for $z\in(-\infty,0)$).
Define
\[
\widetilde g(z):=\frac{g(z)}{(-z)^\beta}.
\]
Then \eqref{eq:g_homogeneous} implies $\widetilde g(tz)=\widetilde g(z)$ for all $t\in(0,1]$.
In particular, for $z\in(-\varepsilon,0)$ the function $\widetilde g$ is constant on the whole interval
(by varying $t$), hence by analyticity it is constant on the connected domain of definition.
Therefore $\widetilde g(z)\equiv c$ for some constant $c\in\mathbb C$, and so
\[
g(z)=c\,(-z)^\beta,
\qquad\text{i.e.}\qquad
S(z)=c\,\frac{(-z)^\beta}{1+z}.
\]

\smallskip\noindent
\textbf{Step 4: $c>0$ and $\beta\ge 0$.}
For real $z\in(-\varepsilon,0)$ we have $S(z)>0$, $1+z>0$, and $(-z)^\beta>0$ by the branch choice.
Thus
\[
c = S(z)\,\frac{1+z}{(-z)^\beta} > 0.
\]
Finally, by Proposition~\ref{propo_HLS}(III) (in particular, the existence of a finite limit
$\lim_{z\uparrow 0} S(z)\in[0,\infty)$ for a nonzero positive variable), we have
$\lim_{z\uparrow 0} g(z)\in[0,\infty)$ as well.  Since $g(z)=c(-z)^\beta$ with $c>0$,
this is possible only when $\beta\ge 0$.
\end{proof}

%------------------------------Every $(c, \beta)$ is realizable ----------------------------

The next proposition shows that, conversely, every pair $(c, \beta)$, with $c > 0$ and $\beta \geq 0$, corresponds to a stable Brown measure. Since $c$ is simply a scaling parameter, it is enough to show this statement holds for  $c = 1$ and $\beta \geq 0$. 
\begin{propo}
\label{theo_stable_2}
For every $\beta \geq 0$, there exists an $R$-diagonal $x \in \AA^\Delta$, so that 
\begin{equation}
\label{stable_S}
S_{|x|^2}(z) = \frac{(-z)^\beta}{1 + z}.
\end{equation}
The random variable $x$ has the Brown measure, denoted $\mu_\beta$, which is stable. 
\end{propo}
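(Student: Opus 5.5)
I would build $x$ as a product of simple $\ast$-free $R$-diagonal pieces, so that its $S$-transform is assembled multiplicatively from known ones via Proposition~\ref{propo_product_R}.

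The first building block is a circular element $c_0$. For it $|c_0|^2$ is free Poisson with rate one, so $S_{|c_0|^2}(z)=1/(1+z)$, which is the case $\beta=0$. For $\beta>0$ the factor $(-z)^\beta$ is needed.

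For $\beta=1$ I would use $c_0^{-1}$, which is $R$-diagonal by Proposition~\ref{propo_powers_R}(2). Its $|c_0^{-1}|^2$ is the inverse of a free Poisson law. The inversion rule $S_{\mu_{inv}}(z)=1/S_\mu(-1-z)$ then gives $S=-z$. Multiplying by a further $\ast$-free circular element $c_1$ gives $S_{|c_1c_0^{-1}|^2}(z)=(-z)/(1+z)$. More generally I would take $x=c_1(c_0^{-1})^{k}$ with free copies, giving $S=(-z)^k/(1+z)$ for integer $k\ge 0$.

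For non-integer $\beta$ I would look for a positive measure $\rho_\beta$ with $S_{\rho_\beta}(z)=(-z)^\beta$. The identity $\psi=\chi^{(-1)}$ with $\chi(z)=z(-z)^\beta/(1+z)$ defines a candidate $\psi$ on a neighbourhood of $(-1,0)$. The plan is to check that this $\psi$ is the $\psi$-transform of a probability measure on $[0,\infty)$. Two routes are available:
\begin{itemize}
\item use the characterization of $\boxtimes$-infinitely divisible laws on $\R^+$ in Bercovici--Voiculescu, since $\log S=\beta\log(-z)$ should correspond to a legitimate L\'evy--Khintchine-type exponent;
\item alternatively, obtain $\rho_\beta$ by $\boxtimes$-powers, since $\rho_\beta=\rho_1^{\boxtimes\beta}$ formally, with $\rho_1$ the inverse free Poisson law. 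Then use the existence of $\boxtimes$-convolution powers of $\boxtimes$-infinitely divisible measures, noting that $\rho_1$ is $\boxtimes$-infinitely divisible.
\end{itemize}

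With $\rho_\beta$ in hand, I take $x=u\,(h^2)^{1/2}$ with $\mu_{h^2}=\pi\boxtimes\rho_\beta$, where $\pi$ is free Poisson. Then $S_{h^2}=(-z)^\beta/(1+z)$. This step, establishing that $(-z)^\beta$ is a genuine $S$-transform for all real $\beta\ge0$, is the main obstacle. I would first try the $\boxtimes$-power route.

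Next I would verify that $x\in\AA^\Delta$. For this I would use the explicit formula \eqref{equ_HL0}: with $S_{h^2}(t-1)=t^{-1}(1-t)^\beta$, the Brown measure satisfies $\mu_x(B(t^{1/2}(1-t)^{-\beta/2}))=t$, which is $\mu_\beta$ of Definition~\ref{defi_mu_beta}. For finiteness of $\varphi(\log^+ h)$, I would check that $\int\log^+t\,d\rho_\beta<\infty$. I expect a $p$-th moment bound \eqref{equ_sufficient_condition} for small $p$, obtained from the tail behaviour encoded in $\psi$ near $z=-1$. This matches the radius of $\mu_\beta$ growing only like $(1-t)^{-\beta/2}$, whose logarithm is integrable in $t$.

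Finally, for stability, Theorem~\ref{theo_semigroup_unbounded} gives
\[
S_{h_s^2}(z)=\frac1s\,\frac{1+z/s}{1+z}\,\frac{(-z/s)^\beta}{1+z/s}=s^{-1-\beta}\,\frac{(-z)^\beta}{1+z}.
\]
By the scaling rule this is $S_{|\alpha x|^2}$ with $\alpha^2=s^{1+\beta}$. Since $S_{h^2}$ determines the Brown measure via Proposition~\ref{propo_HLS}(IV), we get $\mu^{\boxplus s}=D_{s^{-(1+\beta)/2}}\mu_\beta$ for every $s$, in particular for integer $k$, so $\mu_\beta$ is stable.
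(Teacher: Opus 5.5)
Your outline matches the paper's: realize the prescribed $S$-transform by some $h\ge 0$, set $x=uh$, check $x\in\AA^\Delta$, and read off stability from \eqref{semigroup_S} and the scaling rule. Your stability paragraph is the paper's computation, and your integer-$\beta$ construction is the paper's closing example.

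The real difference is the existence step. The paper cites Haagerup--M\"oller (Proposition~3) for the fact that $(-z)^\beta/(1+z)$ is an $S$-transform. You instead propose to derive it from the Bercovici--Voiculescu characterization of $\boxtimes$-infinitely divisible laws on $[0,\infty)$, or from $\boxtimes$-powers of the inverse free Poisson law. That is legitimate, but you should carry out the check rather than say the exponent ``should'' be admissible. With $\Sigma(w)=S(w/(1-w))$ one gets $\Sigma=\exp v$, where $v(w)=\Log(1-w)+\beta\bigl[\Log(-w)-\Log(1-w)\bigr]$. This $v$ is analytic on $\C\setminus[0,\infty)$, real on $(-\infty,0)$, and has $\Im v<0$ on $\C^+$, because $\Arg(-w)<\Arg(1-w)<0$ when $\Im w>0$. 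Your route gets existence from general theory. Citing Haagerup--M\"oller additionally gives the paper explicit fractional moments, finite exactly for $-\tfrac12<\gamma<\tfrac1{1+\beta}$, which settle $\AA^\Delta$-membership in one line.

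That membership step is where your proposal has a genuine gap. Appealing to \eqref{equ_HL0} and to the integrability of $\log\bigl(t^{1/2}(1-t)^{-\beta/2}\bigr)$ is circular: Proposition~\ref{propo_HLS} is available only after $x\in\AA^\Delta$ is known, so this is a consistency check, not a proof. The actual content is a finite small positive moment of $\nu:=\mu_{h^2}$. You only anticipate it, and you point to the wrong end. Values of $\psi_\nu$ near $-1$ (i.e.\ $u\to-\infty$) govern the mass of $\nu$ near $0$; the tail at $+\infty$ is governed by $u\uparrow 0$.

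The missing estimate is short:
\begin{itemize}
\item For $w\in(-\varepsilon,0)$, $\chi_\nu(w)=\frac{w}{1+w}S_\nu(w)=-(-w)^{1+\beta}(1+w)^{-2}$. Hence $\psi_\nu(u)=-(-u)^{1/(1+\beta)}(1+o(1))$ as $u\uparrow 0$.
\item Since $-\psi_\nu(u)=\int\frac{|u|t}{1+|u|t}\,d\nu(t)\ge\frac12\,\nu([1/|u|,\infty))$, you get $\nu([T,\infty))\le -2\psi_\nu(-1/T)=O(T^{-1/(1+\beta)})$.
\item Therefore $\int t^p\,d\nu(t)<\infty$ for $0<p<\frac1{1+\beta}$. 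So $h$ has a finite positive moment, and $x\in\AA^\Delta$ by \eqref{equ_sufficient_condition}.
\end{itemize}
This applies directly to $\nu=\pi\boxtimes\rho_\beta$, so you need not pass through $\rho_\beta$ separately. The threshold $\frac1{1+\beta}$ is exactly the one in the moment formula the paper quotes. With this estimate and the $\Sigma$-check above, your argument is complete.
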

\begin{proof}
By Proposition 3 in \cite{haagerup_moller2013}, for every $\beta \geq 0$,  $(-z)^\beta/(1 + z)$ is a valid $S$-transform of a measure $\nu_\beta$ on  $(0, +\infty)$. Let $h \in \tilde \AA $, $h \geq 0$, and the measure of $h^2$ be $\nu_\beta$. Define $x = u h$, where $u$ is  a Haar unitary, $\ast$-free of $h$. Then, $S_{|x|^2}(z) = (-z)^\beta/(1 + z)$. We claim that $x \in \AA^\Delta$ and therefore the Brown measure  for $x$ is well defined. We denote it $\mu_\beta$. 

Indeed, for $\beta = 0$, $x$ is the standard circle random variable. For $\beta > 0$, by Theorem 3 in \cite{haagerup_moller2013}, the moments of $\nu_\beta$ are given by the formula 
\bal{
\int_0^\infty x^\gamma \, d\nu_\beta(x) 
= \begin{cases}
\frac{\sin(\pi \gamma)}{\pi \gamma}
\frac{\Gamma(1 + 2\gamma) \Gamma(1 - (1 + \beta) \gamma)}{\Gamma(2 + (1 - \beta)\gamma)},& -\frac{1}{2} < \gamma < \frac{1}{1 + \beta}, \\
\infty, & \text{ otherwise.} 
\end{cases}
}

Since $\int_0^\infty x^\gamma \, d\nu_\beta(x) < \infty$  for some $\gamma > 0$, it follows by (\ref{equ_sufficient_condition}) that $x \in \AA^\Delta$ and the Brown measure  for $x$ is well defined. 

We claim that $\mu_\beta$ is stable. Indeed, by formula (\ref{semigroup_S}), 
$
S_{|x_s|^2}(z) = s^{-1 - \beta} (-z)^\beta (1 + z)^{-1}. 
$
This implies that $s^{-(1 + \beta)/2} x_s$ has the same Brown measure as $x$. Then 
\bal{
\mu_\beta^{\boxplus s}(A) = \mu_\beta(s^{-\frac{1 + \beta}{2}}A),
} 
which implies the stability of $\mu_\beta$. 
\end{proof}
%

%-----------------------------Properties of stable Brown measures------------

\subsection*{Properties of stable Brown measures}

What can be said about properties of the measure of squared singular values $\nu_\beta$ and the Brown measure $\mu_\beta$?

By Proposition 6 in \cite{haagerup_moller2013}, the measure $\nu_\beta$ has a continuous density $f_{\nu,\beta}(x)$, $x > 0$, with respect to the Lebesgue measure on $\R$. In general, the density is not easy to describe explicitly, see a characterization in Theorem 6 of \cite{haagerup_moller2013}. 

For the Brown measure $\mu_\beta$, let $F_\beta(r) = \mu_\beta\big(B(0, r)\big)$ and let $F_\beta^{(-1)}(t)$ denote the functional inverse of $F_\beta(r)$. By formulae (\ref{equ_HL}) and (\ref{stable_S}), 
\begin{equation}
\label{equ_r_t}
r = F_\beta^{(-1)}(t) = \frac{t^{1/2}}{(1 - t)^{\beta/2}},
\end{equation}
and $F_\beta(r)$ can be obtained by inverting this function.

\begin{propo}[Power-law tail of $\mu_\beta$]
\label{propo_tails}
Let $\beta>0$ and let $\mu_\beta$ be the stable Brown measure from Definition \ref{defi_mu_beta}.
Then $\mu_\beta$ is absolutely continuous with respect to planar Lebesgue measure $dm_2$.
Moreover, $\mu_\beta$ is rotationally invariant, so there exists a measurable function
$\rho_\beta:(0,\infty)\to[0,\infty)$ such that
\[
d\mu_\beta(z)=\rho_\beta(|z|)\,dm_2(z).
\]
As $r\to\infty$,
\[
\rho_\beta(r)\sim \frac{1}{\pi\beta}\, r^{-(2+2/\beta)}.
\]
Equivalently, the density of the radius variable $R=|Z|$ (where $Z\sim\mu_\beta$),
\[
f_\beta(r):=\frac{d}{dr}\mu_\beta(B(0,r))=2\pi r\,\rho_\beta(r),
\]
satisfies
\[
f_\beta(r)\sim \frac{2}{\beta}\, r^{-(1+2/\beta)}.
\]
\end{propo}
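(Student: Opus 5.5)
The plan is to work entirely from the explicit quantile function \eqref{equ_r_t}, $r=F_\beta^{(-1)}(t)=t^{1/2}(1-t)^{-\beta/2}$ for $t\in(0,1)$. This is the defining relation \eqref{eq:mu_beta_quantile} in Definition~\ref{defi_mu_beta}. Rotational invariance is built into that definition, and also follows from Proposition~\ref{propo_HLS}(I).

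\textbf{Absolute continuity.} The map $\phi(t):=t^{1/2}(1-t)^{-\beta/2}$ is smooth on $(0,1)$. Its logarithmic derivative is
\[
\frac{\phi'(t)}{\phi(t)}=\frac{1}{2t}+\frac{\beta}{2(1-t)}>0 .
\]
Hence $\phi$ is a strictly increasing $C^1$ diffeomorphism of $(0,1)$ onto $(0,\infty)$, since $\phi(0^+)=0$ and $\phi(1^-)=\infty$ for $\beta>0$. Its inverse $F_\beta$ is therefore $C^1$ on $(0,\infty)$, with $F_\beta(0^+)=0$ (no atom at $0$) and $F_\beta(\infty)=1$. The radial law thus has the continuous density $f_\beta(r)=F_\beta'(r)=1/\phi'(F_\beta(r))$. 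Passing to polar coordinates, the measure $\rho_\beta(|z|)\,dm_2(z)$ with $\rho_\beta(r):=f_\beta(r)/(2\pi r)$ is rotationally invariant and has the same radial distribution as $\mu_\beta$. A rotationally invariant measure is determined by its radial law, so it equals $\mu_\beta$, which gives absolute continuity.

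\textbf{Tail asymptotics.} Let $r\to\infty$, which forces $t=F_\beta(r)\to1$. Write $\varepsilon=1-t$. Then $r=(1-\varepsilon)^{1/2}\varepsilon^{-\beta/2}$, so $\varepsilon\sim r^{-2/\beta}$; more precisely, $\varepsilon=r^{-2/\beta}(1+o(1))$ after inverting $r^{2/\beta}=\varepsilon^{-1}(1-\varepsilon)^{1/\beta}$. Next,
\[
\phi'(t)=\phi(t)\Big(\tfrac{1}{2t}+\tfrac{\beta}{2\varepsilon}\Big)\sim r\cdot\frac{\beta}{2\varepsilon}\sim\frac{\beta}{2}\,r^{1+2/\beta}.
\]
Consequently $f_\beta(r)=1/\phi'(t)\sim \frac{2}{\beta}r^{-(1+2/\beta)}$, and dividing by $2\pi r$ gives $\rho_\beta(r)\sim\frac{1}{\pi\beta}r^{-(2+2/\beta)}$.

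\textbf{Main obstacle.} The only delicate point is converting the asymptotic relation between $\varepsilon$ and $r$ into asymptotics of the derivative, so that we do not have to differentiate an asymptotic equivalence. This is handled by the inverse-function formula above: everything is expressed through $\phi'(t)$ evaluated at the exact point $t=F_\beta(r)$, and only $\varepsilon\to0$ and $\phi(t)=r$ are used, so no uniformity issue arises. One should also check that the identity \eqref{equ_r_t} holds for all $t\in(0,1)$, which requires $\mu_\beta(\{0\})=0$. This follows from Proposition~\ref{propo_HLS}(II), because $\nu_\beta$ has a density, or directly from Definition~\ref{defi_mu_beta}.
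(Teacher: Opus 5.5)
Your proof is correct and follows the same route as the paper. Both differentiate the explicit quantile $r=t^{1/2}(1-t)^{-\beta/2}$, use $1-t\sim r^{-2/\beta}$ to get $dr/dt\sim\frac{\beta}{2}r^{1+2/\beta}$, and invert to obtain $f_\beta$ and then $\rho_\beta=f_\beta/(2\pi r)$. Your version also proves absolute continuity explicitly (via $\phi'>0$, $F_\beta=\phi^{-1}\in C^1$ and $F_\beta(0^+)=0$), which the paper's proof only assumes implicitly.
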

\begin{proof}
Recall that $F_\beta(r)=\mu_\beta(B(0,r))$ and that
\[
F_\beta^{(-1)}(t)=\frac{t^{1/2}}{(1-t)^{\beta/2}},\qquad t\in(0,1).
\]
Let $t=F_\beta(r)$ and consider $r\to\infty$ (equivalently $t\uparrow 1$).
From the above formula we have $r\sim (1-t)^{-\beta/2}$, hence
\[
1-t\sim r^{-2/\beta}.
\]
Differentiate $r(t)=t^{1/2}(1-t)^{-\beta/2}$:
\[
\frac{dr}{dt}
=\frac{1}{2}t^{-1/2}(1-t)^{-\beta/2}
+\frac{\beta}{2}t^{1/2}(1-t)^{-\beta/2-1}
\sim \frac{\beta}{2}\, r\, (1-t)^{-1}
\sim \frac{\beta}{2}\, r^{1+2/\beta}.
\]
Therefore
\[
F_\beta'(r)=\frac{dt}{dr}\sim \frac{2}{\beta}\, r^{-(1+2/\beta)}.
\]
Since $\mu_\beta$ is rotationally invariant, $F_\beta'(r)=2\pi r\,\rho_\beta(r)$ a.e.,
so
\[
\rho_\beta(r)\sim \frac{1}{\pi\beta}\, r^{-(2+2/\beta)}.
\]
\end{proof}

\begin{propo}[Moments and integrability]\label{propo_moments}
Let $\mu_\beta$ be the stable Brown measure from Definition \ref{defi_mu_beta}, with parameter $\beta\ge 0$, and let $k\ge 0$. Then
\[
\int_{\C} |z|^k\, d\mu_\beta(z)
=
\begin{cases}
\displaystyle \int_0^1 \left(\frac{t^{1/2}}{(1-t)^{\beta/2}}\right)^k\,dt
=
\frac{\Gamma\!\left(1+\frac{k}{2}\right)\Gamma\!\left(1-\frac{\beta k}{2}\right)}
{\Gamma\!\left(2+\frac{k}{2}-\frac{\beta k}{2}\right)},
& \text{if } \beta k<2,\\[1.2em]
+\infty, & \text{if } \beta k\ge 2.
\end{cases}
\]
In particular, the first moment is finite iff $\beta<2$, and in that case
\[
\int_{\C} |z|\, d\mu_\beta(z)
=
\frac{\sqrt\pi}{2}\,
\frac{\Gamma\!\left(1-\frac{\beta}{2}\right)}{\Gamma\!\left(\frac{5-\beta}{2}\right)}.
\]
\end{propo}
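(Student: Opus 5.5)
We will use the quantile representation. Let $Z\sim\mu_\beta$ and $R=|Z|$. By Definition~\ref{defi_mu_beta} (equivalently \eqref{equ_r_t}), the radial distribution function $F_\beta$ has inverse $F_\beta^{(-1)}(t)=t^{1/2}(1-t)^{-\beta/2}$. This map is continuous and strictly increasing from $(0,1)$ onto $(0,\infty)$, since it is a product of two positive increasing factors. Hence if $T$ is uniform on $(0,1)$, then $F_\beta^{(-1)}(T)$ has the law of $R$. For every $k\ge0$ this gives
\[
\int_{\C}|z|^k\,d\mu_\beta(z)=\E R^k=\int_0^1 \big(F_\beta^{(-1)}(t)\big)^k\,dt=\int_0^1 t^{k/2}(1-t)^{-\beta k/2}\,dt .
\]
This identity holds in $[0,\infty]$ by monotone change of variables. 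We need rotational invariance only to pass from $|z|^k$ to the radial law, which is built into Definition~\ref{defi_mu_beta}.

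Next we evaluate the integral as a Beta integral $B(a,b)$ with $a=1+k/2>0$ and $b=1-\beta k/2$. Near $t=0$ the integrand is bounded, since $k\ge0$. Near $t=1$ it behaves like $(1-t)^{-\beta k/2}$, which is integrable if and only if $\beta k/2<1$, that is, $\beta k<2$. When $\beta k\ge2$ the integral therefore diverges, and the moment is $+\infty$. When $\beta k<2$ we have
\[
B\!\left(1+\tfrac k2,1-\tfrac{\beta k}2\right)=\frac{\Gamma(1+\frac k2)\Gamma(1-\frac{\beta k}2)}{\Gamma(2+\frac k2-\frac{\beta k}2)},
\]
which is the stated formula. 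The case $\beta=0$ is included: every moment is finite, and the formula reduces to $\Gamma(1+k/2)/\Gamma(2+k/2)=1/(1+k/2)$, the moments of the uniform disc.

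For the special case $k=1$, finiteness holds if and only if $\beta<2$. The value is then $\Gamma(3/2)\Gamma(1-\beta/2)/\Gamma((5-\beta)/2)$, and using $\Gamma(3/2)=\sqrt\pi/2$ gives the displayed expression.

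The only step that needs care is justifying the quantile identity $\E R^k=\int_0^1 (F^{(-1)}(t))^k\,dt$. This requires $F_\beta$ to be continuous and strictly increasing with no atoms. That follows from Definition~\ref{defi_mu_beta}, because the quantile function is a continuous strictly increasing bijection of $(0,1)$ onto $(0,\infty)$. In particular $\mu_\beta$ has no atom at $0$ and gives no mass at infinity. The remaining steps are routine Beta-function computations.
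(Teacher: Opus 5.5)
Your proof is correct and follows the paper's argument. Like the paper, you use the quantile representation $\E[R^k]=\int_0^1 \big(F_\beta^{(-1)}(t)\big)^k\,dt$ and then identify a Beta integral, which diverges at $t=1$ exactly when $\beta k\ge 2$. One harmless slip: for $\beta=0$ the quantile function $t^{1/2}$ maps $(0,1)$ onto $(0,1)$, not $(0,\infty)$; in any case, the quantile identity holds for the generalized inverse of any distribution function, so the continuity and strict monotonicity you single out as needing care are not actually needed.
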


\begin{proof}
Let $Z$ be a $\C$-valued random variable with law $\mu_\beta$, and set $R=|Z|$.
Since $F_\beta(r):=\mu_\beta(B(0,r))=\P(R\le r)$, we have for every $k\ge 0$
\[
\E[R^k]=\int_0^\infty r^k\, dF_\beta(r)=\int_0^1 \big(F_\beta^{(-1)}(t)\big)^k\,dt.
\]
Using \eqref{equ_r_t}, $F_\beta^{(-1)}(t)=t^{1/2}(1-t)^{-\beta/2}$, hence
\[
\E[R^k]=\int_0^1 t^{k/2}(1-t)^{-\beta k/2}\,dt.
\]
The integral converges near $t=1$ iff $-\beta k/2>-1$, i.e.\ $\beta k<2$; otherwise it diverges.
When $\beta k<2$, it is a Beta integral:
\[
\int_0^1 t^{k/2}(1-t)^{-\beta k/2}\,dt
=
B\!\left(1+\frac{k}{2},\,1-\frac{\beta k}{2}\right)
=
\frac{\Gamma\!\left(1+\frac{k}{2}\right)\Gamma\!\left(1-\frac{\beta k}{2}\right)}
{\Gamma\!\left(2+\frac{k}{2}-\frac{\beta k}{2}\right)}.
\]
The case $k=1$ gives the stated formula.
\end{proof}

%-----------------------------------------Example----------------------------------------------------
\subsubsection*{Example: Integer $\beta = k \ge 0$}

\begin{propo}
Let $x$ and $y$ be two $\ast$-free standard circular (``circle'') variables, let $k\in\{0,1,2,\ldots\}$, and set
\[
a = x\,y^{-k},
\]
with the convention $y^{0}=1$ (so $a=x$ when $k=0$).
Then $a$ has the stable Brown measure $\mu_\beta$ with $\beta=k$.
In particular, the case $k=0$ recovers the standard circular variable and $\mu_0$ is the uniform measure on the unit disk.
\end{propo}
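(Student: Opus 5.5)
The plan is to compute $S_{|a|^2}$ and compare it with \eqref{stable_S}. Once $S_{|a|^2}(z) = (-z)^k/(1+z)$ is established, Proposition~\ref{propo_HLS}(IV) identifies the Brown measure of $a$ with $\mu_k$. This follows because the radial quantile function is then $t^{1/2}(1-t)^{-k/2}$, as in \eqref{equ_r_t}. Stability is then supplied by Proposition~\ref{theo_stable_2}.

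\emph{The case $k=0$.} Here $a=x$ is circular, so $|x|^2$ is free Poisson with rate $1$. Its $S$-transform is $1/(1+z)$, which matches \eqref{stable_S} with $\beta=0$. In this case the radial quantile is $t^{1/2}$, so $\mu_0$ is uniform on the unit disk.

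\emph{The case $k\ge 1$.} The argument has four steps.

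\begin{enumerate}
\item $y$ is circular and $|y|^2$ has no atom at $0$, so $\ker y=\{0\}$. By Proposition~\ref{propo_powers_R}, $y^k$ is $R$-diagonal and $\mu_{|y^k|^2}=\mu_{|y|^2}^{\boxtimes k}$. Hence $S_{|y^k|^2}(z)=(1+z)^{-k}$.

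\item Again by Proposition~\ref{propo_powers_R}(2), $y^{-k}=(y^k)^{-1}$ is $R$-diagonal and $\mu_{|y^{-k}|}$ is the pushforward of $\mu_{|y^k|}$ under $t\mapsto t^{-1}$. Consequently $|y^{-k}|^2$ has the law of $(|y^k|^2)^{-1}$.

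\item I use the standard inversion rule $S_{\nu_{\mathrm{inv}}}(z)=1/S_\nu(-1-z)$ for measures on $(0,\infty)$ without atoms. Applying it gives
\[
S_{|y^{-k}|^2}(z)=\frac{1}{(1+(-1-z))^{-k}}=(-z)^k .
\]
This rule is not among the results stated earlier in the paper. I would cite it from \cite{haagerup_schultz2007} or \cite{haagerup_moller2013}, or verify it directly on $(-1,0)$ using $\psi_{\nu_{\mathrm{inv}}}(1/w)=-1-\psi_\nu(w)$.

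\item $x$ and $y^{-k}$ are $\ast$-free and both $R$-diagonal. Proposition~\ref{propo_product_R} then gives
\[
S_{|a|^2}(z)=S_{|x|^2}(z)\,S_{|y^{-k}|^2}(z)=\frac{(-z)^k}{1+z}.
\]
\end{enumerate}

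\emph{Membership in $\AA^\Delta$.} For $a\in\AA^\Delta$, first note that $y^{-1}\in L^p$ for small $p>0$: the law of $|y|^2$ is Marchenko--Pastur with density of order $t^{-1/2}$ near $0$, so $\varphi(|y|^{-p})<\infty$ for $p<1$. Since $\AA^\Delta$ is an algebra containing $x$ and $y^{-1}$, it contains $a$. Alternatively, once the $S$-transform is identified, the moment formula used in the proof of Proposition~\ref{theo_stable_2} gives this directly.

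\emph{Main obstacle.} The delicate step is the inversion identity for the unbounded operator $|y^k|^{-2}$. There I must check that the domains of the $S$-transforms overlap on $(-\varepsilon,0)$, so that the multiplicativity in Proposition~\ref{propo_product_R} applies as an identity of analytic functions near $(-\varepsilon,0)$.
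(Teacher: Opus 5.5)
Your proposal is correct and follows essentially the same route as the paper: you compute $S_{|a|^2}(z)=(-z)^k/(1+z)$ using the power rule, the Haagerup--Schultz inversion identity $S_{\mu_{\mathrm{inv}}}(z)\,S_\mu(-1-z)=1$, and multiplicativity for $\ast$-free $R$-diagonal products, and then read off $\mu_k$ via Proposition~\ref{propo_HLS}(IV). The only difference is cosmetic: the paper inverts first, getting $S_{|y^{-1}|^2}(z)=-z$, and then takes the $k$-th power, whereas you take the power first and then invert; your explicit verification that $a\in\AA^\Delta$ fills in a point the paper leaves implicit.
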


\begin{proof}
If $k=0$, then $a=x$ is a standard circular variable. As is well known, $|x|^2$ is free Poisson with parameter $\lambda=1$, hence
\[
S_{|a|^2}(z)=S_{|x|^2}(z)=\frac{1}{1+z}=\frac{(-z)^0}{1+z}.
\]
Therefore $a$ has the Brown measure $\mu_0$ (the circular law), which is exactly the $\beta=0$ member of the stable family.

Assume now that $k\ge 1$. Let $\mu\in \mathrm{Prob}((0,\infty),\BB)$ and let $\mu^{-1}\equiv \mu_{\mathrm{inv}}$
be its image under $t\mapsto t^{-1}$. Then, by Proposition 3.13 in \cite{haagerup_schultz2007},
for all $z$ in a neighborhood of $(-1,0)$,
\begin{equation}
\label{S_inverted_measure}
S_{\mu^{-1}}(z)\, S_{\mu}(-1-z)=1.
\end{equation}
If $c$ is a circular random variable, then $|c|^2$ is free Poisson with parameter $\lambda=1$, hence
$S_{|c|^2}(z)=(1+z)^{-1}$. Next, by Proposition~\ref{propo_powers_R}(2) and \eqref{S_inverted_measure},
\[
S_{|c^{-1}|^2}(z)=-z.
\]
Then, applying Proposition~\ref{propo_powers_R}(1) to $x=c^{-1}$ yields
\[
S_{|c^{-k}|^2}(z)=(-z)^k.
\]
Finally, by Proposition~\ref{propo_product_R},
\[
S_{|a|^2}(z)=\frac{(-z)^k}{1+z},
\]
and therefore $a$ has the Brown measure $\mu_k$.
\end{proof}

Here is an interesting consequence. Let $x_1,\ldots,x_n,y_1,\ldots,y_n$ be $\ast$-free standard circular variables.
Then
\[
x_1 y_1^{-k} + \cdots + x_n y_n^{-k}\ \sim_B\ n^{\frac{1+k}{2}}\, x_1 y_1^{-k},
\]
where $\sim_B$ means that these two variables have the same Brown measure.

%-------------------------------Appendix---------------------------

\backmatter

\begin{appendices}

\section{Proof of \eqref{equ_S_HL} for unbounded measures}
\label{section_proof_HL_formula}\label{secA1}

\begin{lemma}
Let $\mu\in\mathrm{Prob}([0,\infty),\mathcal B)$ with $\mu\neq\delta_0$. Then there exists $\delta \in (0, \infty]$ such that the R-transform $R_\mu$ of $\mu$ is well-defined on $(-\delta, 0)$. Set $C_\mu(z):=zR_\mu(z)$. Then, there exists $\eps > 0$ such that $C_\mu(z)$ is invertible on $(-\varepsilon, 0)$ and 
\[
S_\mu(u)=\frac{1}{u}\,C_\mu^{(-1)}(u)=\frac{1}{u}\Big(uR_\mu(u)\Big)^{(-1)},
\qquad u\in(-\varepsilon,0).
\]
\end{lemma}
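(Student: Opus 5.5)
We work directly from the definitions: $G_\mu$, $\psi_\mu$, $\chi_\mu$ and $S_\mu(u)=\frac{1+u}{u}\chi_\mu(u)$. First we relate $\psi_\mu$ and $G_\mu$, and then we identify the inverse of $C_\mu(z)=zR_\mu(z)=zG_\mu^{(-1)}(z)-1$. No moment assumptions are needed, because only analytic identities near the boundary point are used.

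\textbf{Step 1: domain of $R_\mu$.} Let $z\in\C\setminus[0,\infty)$. Writing $\psi_\mu(z)=\frac1zG_\mu(1/z)-1$ gives
\[
G_\mu(1/z)=z\big(1+\psi_\mu(z)\big).
\]
For real $w=1/z$ with $w<0$, $G_\mu(w)=\int\frac{d\mu(t)}{w-t}$ is real, negative, strictly decreasing in $w$, and tends to $0^-$ as $w\to-\infty$. Hence $G_\mu$ maps $(-\infty,-\beta)$ bijectively onto an interval $(-\delta,0)$. The Bercovici--Voiculescu inverse on the cone $\Gamma_{\alpha,\beta}$ extends analytically across the negative real axis near $-\infty$; the tricky point is that the cone lies in $\C^+$. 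To handle this, I would use that $G_\mu$ is analytic and univalent on a neighbourhood of $(-\infty,-\beta)$ in $\C\setminus[0,\infty)$, since $G_\mu'(w)=-\int(w-t)^{-2}d\mu<0$ there. Both inverses are analytic and agree on the overlapping region near $\infty$ by uniqueness of analytic continuation, so $G_\mu^{(-1)}$ and hence $R_\mu$ are defined and real on $(-\delta,0)$.

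\textbf{Step 2: the inverse of $C_\mu$.} Put $w=G_\mu^{(-1)}(v)$ for $v\in(-\delta,0)$, and set $z=1/w\in(-1/\beta,0)$. Then
\[
C_\mu(v)=vw-1=\frac{v}{z}-1=\frac{G_\mu(1/z)}{z}-1=\psi_\mu(z).
\]
So $C_\mu(v)=u$ if and only if $\psi_\mu(z)=u$, which gives $z=\chi_\mu(u)$ with $z$ small negative. Here $\psi_\mu$ is strictly decreasing from $0$ on $(-\eta,0)$ to values in $(\mu(\{0\})-1,0)$, so this is a real bijection. Consequently $v=zG_\mu(1/z)=z(1+\psi_\mu(z))=\chi_\mu(u)(1+u)$, that is,
\[
C_\mu^{(-1)}(u)=(1+u)\chi_\mu(u)=u\,S_\mu(u).
\]
This holds for $u\in(-\varepsilon,0)$ with $\varepsilon$ small enough that all maps involved are real-analytic bijections. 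Invertibility of $C_\mu$ on $(-\varepsilon,0)$ follows from the composition of bijections $v\mapsto 1/G_\mu^{(-1)}(v)\mapsto\psi_\mu(\cdot)$.

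\textbf{Step 3: checking compatibility of branches.} It remains to verify that $\chi_\mu$ restricted to real $u$ is the inverse defined on $\VV_\mu=\psi_\mu(i\C^+)$. This holds because small negative $z$ lie on the boundary of the left half-plane $i\C^+$. Univalence of $\psi_\mu$ on a neighbourhood of the negative axis follows from $\psi_\mu'(z)=\int t(1-zt)^{-2}d\mu>0$ for real $z<0$, so the inverses agree by analytic continuation. I expect the main obstacle to be Step 1, namely justifying that the analytic inverse of $G_\mu$ from the cone coincides with the real inverse near $0^-$ for unbounded $\mu$. If direct continuation is awkward, I would instead define $R_\mu$ on $(-\delta,0)$ via the real inverse and check that it agrees with the cone definition on a common connected open set near $0$ intersecting $\C^-$.
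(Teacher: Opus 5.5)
This is essentially the paper's own proof: your identity $C_\mu\big(G_\mu(1/z)\big)=\psi_\mu(z)$ for real $z<0$, together with strict monotonicity of $G_\mu$ and $\psi_\mu$ on the negative axis, gives $C_\mu^{(-1)}(u)=(1+u)\chi_\mu(u)=uS_\mu(u)$. Like your fallback, the paper simply defines $R_\mu$ on $(-\delta,0)$ through the real inverse of $G_\mu$ and never checks compatibility with the cone definition, so your Step 1 continuation sketch is not needed; as written it is also not justified, since $G_\mu'<0$ on an unbounded interval does not give univalence on a neighbourhood, and you do not show that neighbourhood overlaps $\Gamma_{\alpha,\beta}$. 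Two small slips: in Step 2 it should read $v=G_\mu(1/z)=z(1+\psi_\mu(z))$, and the negative reals lie in the interior of $i\C^+=\{\Re w<0\}$, not on its boundary, so Step 3 is immediate.
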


\begin{proof}
The Cauchy transform $G_\mu$ is well-defined, continuous and strictly decreasing on  $(-\infty, 0)$. Hence it maps $(-\infty, 0)$ to $(-\delta, 0)$ with $\delta \in (0, \infty]$.  It follows that $R_\mu(z) = G^{(-1)}(z) - \tfrac{1}{z}$ is well defined on $(-\delta, 0)$. 

\textbf{Claim:} for small $\eps_0 > 0$, the function $C_\mu(z) = z R_\mu(z) = z G^{(-1)}(z) - 1$ is strictly increasing and continuous on $(-\eps_0, 0)$, and therefore $C_\mu$  bijectively maps  $(-\eps_0, 0)$ to $(-\eps_1, 0)$ for some $\eps_1 > 0$. 

\textbf{Proof of the claim:} Define
$\psi_\mu(w)=\int_0^\infty \frac{wt}{1-wt}\,d\mu(t)$ for $w\in\mathbb C\setminus[0,\infty)$.
It is a standard property of $\psi_\mu$ that it maps $(-\infty, 0)$ to $(\mu(\{0\}) - 1, 0)$ and that it is strictly increasing and continuous on  $(-\infty, 0)$. In particular, it has a well-defined functional inverse $\chi_\mu$ on  $(\mu(\{0\}) - 1, 0)$.    Then, for $u < 0$, 
\begin{equation}
\label{equ_Cauchy_psi_basic}
1 + \psi_\mu(u) = \int_0^\infty \frac{1}{1 - ut}\, d\mu(t) = \frac{1}{u}\int_0^\infty \frac{1}{1/u - t} \, d\mu(t) = \frac{1}{u} G_\mu\Big(\frac{1}{u}\Big), 
\end{equation}
so $G_\mu\big(\frac{1}{u}\big) = u\big(1 + \psi_\mu(u)\big)$. Let $z = G_\mu\big(\frac{1}{u}\big)$. Then $u \to z$ maps $(-\infty, 0) \to (-\delta, 0)$ and the map is strictly increasing on $(-\infty, 0)$. The inverse function $u(z) = 1/G_\mu^{(-1)}(z)$ is also well-defined and strictly increasing on  $(-\delta, 0)$.

Since $R_\mu(z) = \frac{1}{u} - \frac{1}{z}$, Equation \eqref{equ_Cauchy_psi_basic} (in the form $z = u\big(1 + \psi_\mu(u)\big)$) gives 
\[
z R_\mu(z) = z\Big( \frac{1}{u} - \frac{1}{z}\Big) = \psi_\mu(u). 
\]
It follows that $z(u) R_\mu(z(u))$ is strictly increasing in $u$ and maps $(-\infty, 0) \to (\mu(\{0\}) - 1, 0)$. This implies that $z R_\mu(z)$ is strictly increasing in $z$ on interval $(-\delta, 0)$ and maps $(-\delta, 0)$ bijectively to  $(-\eps_1, 0) \subseteq (\mu(\{0\}) - 1, 0)$. This competes the proof of the claim. 

Then, writing $w=\psi_\mu(u)$, we have $u=\chi_\mu(w)$ and
$z=G_\mu(1/u)=u(1+w)=(1+w)\chi_\mu(w)$. Since $w = C_\mu(z)$, hence
\[
 C_\mu^{(-1)}(w) = z  = (1+w)\chi_\mu(w), \qquad  w \in (-\eps_1, 0),
\    
\]
and therefore
\[
S_\mu(w)=\frac{1+w}{w}\chi_\mu(w)=\frac{1}{w}\,C_\mu^{(-1)}(w).
\]

\end{proof}

%An appendix contains supplementary information that is not an essential part of the text itself but which may be helpful in providing a more comprehensive understanding of the research problem or it is information that is too cumbersome to be included in the body of the paper.
%
%%%=============================================%%
%%% For submissions to Nature Portfolio Journals %%
%%% please use the heading ``Extended Data''.   %%
%%%=============================================%%
%
%%%=============================================================%%
%%% Sample for another appendix section			       %%
%%%=============================================================%%
%
%%% \section{Example of another appendix section}\label{secA2}%
%%% Appendices may be used for helpful, supporting or essential material that would otherwise 
%%% clutter, break up or be distracting to the text. Appendices can consist of sections, figures, 
%%% tables and equations etc.
%
\end{appendices}
%
%%%===========================================================================================%%
%%% If you are submitting to one of the Nature Portfolio journals, using the eJP submission   %%
%%% system, please include the references within the manuscript file itself. You may do this  %%
%%% by copying the reference list from your .bbl file, paste it into the main manuscript .tex %%
%%% file, and delete the associated \verb+\bibliography+ commands.                            %%
%%%===========================================================================================%%

%\bibliography{comtest}% common bib file
%% if required, the content of .bbl file can be included here once bbl is generated
%%\input sn-article.bbl

\end{document}